\numberwithin{equation}{section}
\newtheorem{theorem}{Theorem}[section]
\newtheorem{lemma}[theorem]{Lemma}
\newtheorem{proposition}[theorem]{Proposition}
\newtheorem{definition}[theorem]{Definition}
\newtheorem{remark}[theorem]{Remark}
\newtheorem{main}[theorem]{Main Result}
\def\H{H_{\sigma}}
\def\L{L_{\sigma}}
\def\u{\mathbf{u}}
\def\b{\mathbf{b}}
\begin{document}
\title[Ideal Magnetic B\'{e}nard Problem]{On The Two and Three Dimensional Ideal Magnetic B\'{e}nard Problem  - Local Existence and Blow-up Criterion}

\author[Utpal Manna]{Utpal Manna}
%\author[iiser]{Utpal Manna\corref{cor1}}
%\ead{manna.utpal@iisertvm.ac.in}

%\author[iiser]{Akash A. Panda}
%\ead{akash.panda13@iisertvm.ac.in}

%\cortext[cor1]{Corresponding author}

\address[iiser]{School of Mathematics, Indian Institute of Science Education and Research Thiruvananthapuram,
 695016, Kerala, INDIA}

\email{manna.utpal@iisertvm.ac.in}

\author[Akash A. Panda]{Akash A. Panda}

\address{%
School of Mathematics\\
Indian Institute of Science Education and Research (IISER) Thiruvananthapuram\\
Thiruvananthapuram 695016\\
Kerala, INDIA}

\email{akash.panda13@iisertvm.ac.in}

\subjclass{Primary 76D05; Secondary 76D03}

\keywords{Magnetic B\'{e}nard problem, Commutator estimates, Blow-up criterion, Logarithmic Sobolev inequality.}

\begin{abstract}
In this paper, we consider the ideal magnetic B\'{e}nard problem in both two and three dimensions and prove local-in-time existence and uniqueness of strong solutions in $H^s$ for $s > \frac{n}{2}+1, n = 2,3$.  In addition, a necessary condition is derived for singularity development with respect to the $BMO$-norm of the vorticity and electrical current, generalising the Beale-Kato-Majda condition for ideal hydrodynamics. 
\end{abstract}

%\begin{keyword}
%{Magnetic B\'{e}nard problem \sep Commutator estimates \sep Blow-up criterion \sep Logarithmic Sobolev inequality}.
%\MSC[2010] 76D05 \sep 76D03
%\end{keyword}

\maketitle\setcounter{equation}{0}

\maketitle
\allowdisplaybreaks

\section{Introduction}

The magnetic B\'{e}nard problem with full viscosity is given by: 
\begin{align}
 \frac{\partial \mathbf u}{\partial t} + (\mathbf u \cdot \nabla)\mathbf u - \nu \Delta \mathbf u 
 + \nabla p_{\ast} &= (\mathbf b \cdot \nabla)\mathbf b + \mathbf \theta e_n,
 \ \ \textrm{in}\ \mathbb{R}^n\times (0, \infty),\label{MBF1}\\
\frac{\partial \mathbf \theta}{\partial t} + (\mathbf u \cdot \nabla)\mathbf \theta 
- \kappa \Delta \mathbf \theta &= \mathbf{u} \cdot e_n,  \ \ \textrm{in}\ \mathbb{R}^n\times (0, \infty),\label{MBF2}\\
 \frac{\partial \mathbf b}{\partial t} + (\mathbf u \cdot \nabla)\mathbf b 
 - \mu \Delta \mathbf b &= (\mathbf b \cdot \nabla)\mathbf u,  
 \ \ \textrm{in}\ \mathbb{R}^n\times (0, \infty),\label{MBF3}\\
\nabla \cdot \mathbf u = 0 &= \nabla \cdot \mathbf b,  \ \ \textrm{in}\ \mathbb{R}^n\times (0, \infty),
\end{align}
with initial conditions
\begin{align*}
\mathbf u(x,0) = \mathbf u_{0}(x) , \mathbf \theta(x,0) = \mathbf \theta_{0}(x ), 
\mathbf b(x,0) = \mathbf b_{0}(x)  \ \ \textrm{in}\ \mathbb{R}^n,
\end{align*}
where $n$ = 2, 3. Here
$\u:\mathbb{R}^n\times[0,\infty)\to\mathbb{R}^n$ is the velocity
field, $\mathbf{\theta}:\mathbb{R}^n\times[0,\infty)\to\mathbb{R}$ is the temperature, $\b:\mathbb{R}^n\times[0,\infty)\to\mathbb{R}^n$ is the
magnetic field, $p_*$ is the total pressure field
where $p_*=p+\frac{1}{2}|\b|^2$,
$p:\mathbb{R}^n\times[0,\infty)\to\mathbb{R}$ is the pressure. $e_n$ denotes the unit vector along the $n^{\textrm{th}}$ direction. The term $\mathbf \theta e_n$ represents buoyancy force on fluid motion and $\mathbf{u} \cdot e_n$ signifies the Rayleigh-B\'{e}nard convection in a heated inviscid fluid.
$\nu \geq 0$, $\mu \geq 0$ and $\kappa \geq 0$ denote the coefficients of kinematic viscosity, magnetic diffusion and thermal diffusion respectively.

The global-in-time regularity in two-dimensions of the above problem when $\nu, \mu$ and $\kappa > 0$ is known for a long time \cite{GP}. Due to the parabolic couplings, it is indeed possible to rewrite the above system in the abstract framework of the Navier-Stokes equations and then use the standard solvability techniques (e.g. see Temam \cite{Te}). In three-dimensions, one can at-most expect local-in-time solvability result with arbitrary initial data and global-in-time result for sufficiently small initial data, much like the Navier-Stokes equations. In \cite{CD}, the authors obtained the global well-posedness of two-dimensional magnetic B\'enard problem without thermal diffusivity and with vertical or horizontal magnetic diffusion. Moreover, the authors prove global regularity and some conditional regularity of strong solutions with mixed partial viscosity. This work provides an extension of an earlier result \cite{ZFN} on the global regularity with full dissipation and magnetic diffusion. It is worthwhile to note that there are very few literatures available where the case $\nu=\kappa=\mu=0$ has been discussed in two and three dimensions for the magnetic B\'{e}nard problem.

However, for the ideal magneto hydrodynamic (MHD) equations, i.e. when
$\mathbf\theta\equiv 0$ and $\nu=\mu=0$, in (\ref{MBF1})-(\ref{MBF3}), the local-in-time existence of
strong solutions have been proved by Schmidt \cite{SPG} and Secchi
\cite{SeP}, when the initial data is in $H^m$ for integer $m>1+n/2$.
Schmidt \cite{SPG} obtained the well-posedness and regularity of maximal solutions
and continuous dependence on forcing terms and initial data (using a
regularisation procedure). Caflisch, Klapper and Steele \cite{CKS}
derived a criteria for energy conservation and helicity conservation
for weak solutions of ideal MHD equations. The authors in \cite{CKS} extended
the Beale-Kato-Majda \cite{BKM} criterion to the three-dimensional ideal MHD
equations by  showing that for sufficiently regular initial data 
the following condition
$$\int_0^T\left(\|\nabla\times\u(\tau)\|_{\mathrm{L}^{\infty}}+\|\nabla\times\b(\tau)\|_{\mathrm{L}^{\infty}}\right) d\tau<\infty,$$ ensures that the solution can be continued beyond time $T$,
where $\nabla\times\u$ is the fluid vorticity, $\nabla\times\b$ is
the electrical current.

On the other hand, for the ideal Boussinesq system, i.e. when $\b\equiv \mathbf 0, \nu=\kappa=0$, and the Rayleigh-B\'{e}nard convection term $\u\cdot e_n$ is absent in (\ref{MBF1})-(\ref{MBF3}), only local-in-time existence results are available even in two-dimensions. It was proved in \cite{CN} that if the initial data $(\mathbf{u_0}, \mathbf{\theta}_0)\in\H^{3}(\mathbb R^{2})\times H^{3}(\mathbb R^{2})$, then local-in-time classical solutions exist and is unique. Moreover,  Beale-Kato-Majda type criterion for blow-up of smooth solutions is established in \cite{CN}. More precisely, they proved that the smooth solution exists on $[0, T]$ if and only if $\nabla\mathbf\theta\in L^1 (0, T; L^{\infty}(\mathbb{R}^2))$. For the three-dimensional Boussinesq system, a very few results on local-in-time existence and blow-up criterion are available (e.g. see \cite{GF, IM, QDY, Ye}). However, in the very particular case of the axisymmetric initial data, global-in-time well-posedness has been proven in three-dimensions by  Abidi et. al. \cite{AHK}. In a very recent work \cite{MP}, authors proved  local-in-time existence and uniqueness of strong solutions in $H^s$ for real $s > n/2+1$ for the ideal Boussinesq equations in $\mathbb{R}^n , n = 2,3$ and established Beale-Kato-Majda type blow-up criterion  with respect to the $BMO$-norm of the vorticity.

In this work, we consider the ideal magnetic B\'{e}nard problem (i.e. when $\nu=\kappa=\mu=0$) in both two and three dimensions and prove local-in-time existence and uniqueness of the strong solutions when the initial data $(\mathbf{u_0}, \mathbf{\theta}_0, \b_0)\in\H^{s}(\mathbb R^{n})\times H^{s}(\mathbb R^{n})\times\H^{s}(\mathbb R^{n})$, where $s>n/2+1$. We prove when $s>n/2+1$, $BMO$-norms of the vorticity, electrical current  and that of the gradient of the temperature (i.e. $\nabla\times\mathbf u, \nabla\times\mathbf b, \nabla\mathbf\theta \in L^1(0, T; BMO)$) control the breakdown of smooth solutions of the above systems.  However, we later show that under suitable additional assumption on $\mathbf{\theta}_0$, one can completely relax the condition on gradient of the temperature and the conditions $\nabla\times\mathbf u, \nabla\times\mathbf b\in L^1(0, T; BMO)$ are sufficient to ensure that the smooth solution persists. To the best of authors' knowledge, this work is new in the literature and may be seen as an extension of the blow-up criterion for ideal MHD equations due to Caflisch et. al. \cite{CKS} and that of ideal Boussinesq equations due to Manna et. al.  \cite{MP}.
 
We note that, in view of the recent work of Bourgain and Li \cite{BL} on the ill-posedness of the two and three dimensional Euler equations in $H^{n/2+1}, n=2, 3$, it seems likely that the ideal magnetic B\'{e}nard problem is also ill-posed in $H^{n/2+1}, n=2, 3$, although it still remains an open problem.

To be precise, in this work, we consider the following ideal magnetic B\'{e}nard problem
\begin{align}\label{mb1}
 \frac{\partial \mathbf u}{\partial t} + (\mathbf u \cdot \nabla)\mathbf u 
 + \nabla p_{\ast} &= (\mathbf b \cdot \nabla)\mathbf b + \mathbf \theta e_n,
 \ \ \textrm{in}\ \mathbb{R}^n\times (0, \infty),\\
\frac{\partial \mathbf \theta}{\partial t} + (\mathbf u \cdot \nabla)\mathbf \theta 
&= \mathbf{u} \cdot e_n,  \ \ \textrm{in}\ \mathbb{R}^n\times (0, \infty), 
 \label{mb2}\\
 \frac{\partial \mathbf b}{\partial t} + (\mathbf u \cdot \nabla)\mathbf b 
 &= (\mathbf b \cdot \nabla)\mathbf u,  \ \ \textrm{in}\ \mathbb{R}^n\times (0, \infty),\label{mb3}
\end{align}
with
\begin{align}
\nabla \cdot \mathbf u = 0 = \nabla \cdot \mathbf b,  \ \ \textrm{in}\ \mathbb{R}^n\times (0, \infty),\label{mb4}
\end{align}
\begin{align}
\mathbf u(x,0) = \mathbf u_{0}(x) , \mathbf \theta(x,0) = \mathbf \theta_{0}(x ), 
\mathbf b(x,0) = \mathbf b_{0}(x)  \ \ \textrm{in}\ \mathbb{R}^n, \label{mb5}
\end{align}
and prove the following main results.

First we state the result concerning local-in-time existence of strong solutions.
\begin{main}\label{mr1}
Let $s\in\mathbb{R}$ be such that $s > \frac{n}{2} + 1, n = 2, 3$. Let $(\mathbf{u_0}, \mathbf{\theta}_0,
\mathbf{b_0}) \in \H^{s}(\mathbb R^{n})\times H^{s}(\mathbb R^{n}) \times \H^{s}
(\mathbb R^{n})$. Then there exists a unique strong solution $(\mathbf{u}, \mathbf{\theta}, \mathbf{b})$ to the problem \eqref{mb1}-\eqref{mb5}, with $\mathbf{u}\in C([0, {T^{\ast}}]; \H^{s}(\mathbb R^{n}))$,

 \noindent $\mathbf{\theta} \in C([0, {T^{\ast}}]; H^{s}(\mathbb R^{n}))$ and $\mathbf{b}\in C([0, {T^{\ast}}]; \H^{s}
(\mathbb R^{n}))$ for some finite time $T^{\ast}=T^{\ast}(s, \|\mathbf u_0\|_{\H^{s}}, \|\mathbf \theta_0\|_{H^{s}},$ $\|\mathbf b_0\|_{\H^{s}}) > 0$.
\end{main}
To prove this result, we consider the Fourier truncated  ideal magnetic B\'{e}nard problem on the whole of $\mathbb{R}^n, n=2, 3$, and show that the solutions $(\mathbf u^{\textit R}, \mathbf \theta^{\textit R}, \mathbf b^{\textit R})$ of some smoothed version of  the  ideal magnetic B\'{e}nard system exist.  We then establish that the $H^s$-norm of $(\mathbf u^{\textit R}, \mathbf \theta^{\textit R},  \mathbf b^{\textit R})$ are uniformly bounded up to a terminal time $\tilde T$, which is independent of $R$.  We further show that up to the blowup time, the solution $(\mathbf u^{\textit R}, \mathbf \theta^{\textit R},  \mathbf b^{\textit R})$ is a Cauchy sequence in the $L^2$-norm as $R \to \infty$, and by using Sobolev interpolation, $(\mathbf u^{\textit R}, \mathbf \theta^{\textit R},  \mathbf b^{\textit R}) \to (\mathbf{u}, \mathbf{\theta}, \b)$ in any $H^{s'}$ for $0 < s' < s$. Finally we provide the proof of the above Main Result \ref{mr1} in Theorem \ref{mt1}.

Next, we establish that the $BMO$ norms of the vorticity and electrical current control the breakdown of smooth solutions. Our main result concerning the blow-up criterion is as follows:
\begin{main}\label{mr2}
Let $(\mathbf{u_0}, \mathbf{\theta}_0,
\mathbf{b_0})$ have same regularity as above and $s> \frac{n}{2} + 1, n = 2, 3$. If $(\mathbf u, \mathbf\theta, \mathbf b)$ satisfy the condition 
\begin{equation*}
\int_{0}^{T^{\ast}} \left( \|\nabla \times \mathbf u(\tau)\|_{BMO} + \|\nabla\mathbf\theta(\tau)\|_{BMO} + \|\nabla \times \mathbf b(\tau)\|_{BMO}\right) \, d\tau < \infty,
\end{equation*}
then the solution $(\mathbf u, \mathbf \theta, \mathbf b)$ can be continuously extended to $[0, T]$ for some $T > T^{\ast}.$ 
However, if $\mathbf{\theta}_0 \in H^{s}(\mathbb R^{n})\cap W^{1,p}(\mathbb R^{n}), \ 2\leq p \leq\infty$, then the condition
\[\int_{0}^{T^{\ast}} \left( \|\nabla \times \mathbf u(\tau)\|_{BMO} + \|\nabla \times \mathbf b(\tau)\|_{BMO}\right) \,d\tau 
< \infty\]
is sufficient to ensure that the solution $(\mathbf u, \mathbf \theta, \mathbf b)$  can be extended continuously to $[0, T]$ for some $T > T^{\ast}.$
\end{main}

\begin{remark}
The above result still holds if we replace $BMO$ with the Besov space $B^0_{\infty, \infty}$ used in Kozono et. al. \cite{KOT} or if we replace the condition by the one introduced in Planchon \cite {Pa}. 
To be precise, the condition above can be weakened to 
\begin{align*}
&\int_{0}^{T^{\ast}} \left(\|\nabla \times \mathbf u(\tau)\|_{B^0_{\infty, \infty}} + \|\nabla \times \mathbf b(\tau)\|_{B^0_{\infty, \infty}}\right) \,d\tau \\
&\quad = \int_{0}^{T^{\ast}} \left(\sup_{j} \|\triangle_j (\nabla \times \mathbf u(\tau))\|_{L^{\infty}} +\sup_{j} \|\triangle_j(\nabla \times \mathbf b(\tau))\|_{L^{\infty}}\right) \,d\tau
< \infty,
\end{align*}
or to 
\begin{align*}
\lim_{\delta\to 0} \int_{T^{\ast}-\delta}^{T^{\ast}} \left(\sup_{j} \|\triangle_j (\nabla \times \mathbf u(\tau))\|_{L^{\infty}} +\sup_{j} \|\triangle_j(\nabla \times \mathbf b(\tau))\|_{L^{\infty}}\right) \,d\tau
< \epsilon,
\end{align*}
for some sufficiently small $\epsilon >0$.
\end{remark}

\noindent 
The rest of the paper is organised as follows. After defining various operators, function spaces, its properties and certain basic inequalities in Section \ref{Pm}, we start investigating about the ideal magnetic B\'enard problem in Section \ref{EM} and prove results concerning energy estimates and convergence of the approximate solutions before proving the Main Result \ref{mr1} in Theorem \ref{mt1}. In section \ref{BC}, we provide the proof of the Main Result \ref{mr2} in Theorem \ref{buc} and Theorem \ref{buc1}.

\section{Preliminaries}\label{Pm}
\subsection{Fractional Derivative Operator.}

Let us define $J^s$ (real $s>0$), which denotes the Bessel potential of order $s,$ in terms of Fourier transform as follows:
\[ {\mathcal F} \left[ J^{s} f \right](\xi) = (1 + |\xi|^2)^{s/2} \widehat f(\xi) .\]

$J^s$ is also equivalent to the operator $(I - \Delta)^{s/2}.$ 

Assume $0<s<\infty$ and $f\in L^2(\mathbb{R}^n)$. Then $f\in H^{s}(\mathbb R^{n})$ if $(1 + |\xi|^2)^{s/2} \widehat f(\xi)\in L^2(\mathbb{R}^n)$. The norm on $H^{s}(\mathbb R^{n})$ is given by
\begin{align}\label{hs}
\| f\|_{H^{s}} = \left( \int_{\mathbb R^{n}} \left[ (1 + |\xi|^2)^{s/2} |\widehat f(\xi)| \right]^2 \right)^{1/2} = \left\| (1 + |\xi|^2)^{s/2} \widehat f(\xi)\right\|_{L^2}  = \| J^s f\|_{L^2}
\end{align}
and the inner product on $H^{s}(\mathbb R^{n})$ is given by
\begin{align*}
(f, g)_{H^{s}} = \left( (1 + |\xi|^2)^{s/2} \widehat f(\xi), (1 + |\xi|^2)^{s/2} \widehat g(\xi) \right)_{L^2} &= ({\mathcal F} \left[ J^s f \right](\xi), {\mathcal F} \left[ J^s g \right](\xi))_{L^2} \nonumber \\ &= \left( J^s f, J^s g \right)_{L^2}.
\end{align*}

\begin{remark}\label{gradhs}
It is trivial to observe that
 \[\| \nabla f\|_{{H}^{s-1}} \leq \|f\|_{{H}^{s}}.\]
\end{remark}

\subsection{Fourier Truncation Operator.}

Let us define the Fourier truncation operator $\mathcal S_{\textit{R}}$ as follows:
\[ \widehat{{\mathcal S_{\textit{R}}} f}(\xi) := \mathbf 1_{B_{R}}(\xi)\widehat f(\xi),\]

where $B_R$, a ball of radius $R$ centered at the origin and  $\mathbf 1_{B_{R}}$ is the indicator function. Then we infer the following important properties:

\begin{enumerate}
\item 
$\|{\mathcal S_{\textit{R}}} f \|_{H^{s}(\mathbb R^{n})} \leq C \|f \|_{H^{s}(\mathbb R^{n})}.$ (where $C$ is a constant independent of $R$)

\item
$\| {\mathcal S_{\textit{R}}} f  - f\|_{H^{s}(\mathbb R^{n})} \leq  \frac{C}{R^{k}} \|f \|_{H^{s+k}(\mathbb R^{n})}.$

\item
$\| ({\mathcal S_{\textit{R}}} - {\mathcal S_{\textit R'}}) f\|_{H^{s}} \leq C \left. \max\left\{ \left( \frac{1}{R} \right)^k , \left( \frac{1}{R'} \right)^k \right\} \right. \|f \|_{H^{s+k}}.$
\end{enumerate}
\noindent For the proofs of the properties see \cite{Fe}.

We define the function spaces
\[H^{s}_{\sigma}(\mathbb R^{n}) = \left\{ f \in H^{s}(\mathbb R^{n}) : \nabla \cdot f = 0 \right\},\left.\right.and \left.\right.\H^{s}(\mathbb R^{n}) = \left(H^{s}_{\sigma}(\mathbb R^{n})\right)^{n}.\]

\begin{remark}\label{prodhs}
If $s > n/2$, then each $f \in H^{s}(\mathbb R^{n})$ is bounded and continuous and hence 
\[ \|f\|_{L^{\infty}(\mathbb R^{n})} \leq C \|f\|_{H^s(\mathbb R^{n})}, \left. \right. for\left. \right. s > n/2.\]
Also, note that $H^s$ is an algebra for $s > n/2$, i.e., if $f, g \in H^{s}(\mathbb R^{n})$, then $fg \in H^{s}(\mathbb R^{n})$, for $s > n/2$. Hence, we have
\[ \| fg\|_{H^s} \leq C \| f\|_{H^s} \| g\|_{H^s}, \left. \right. for\left. \right.s > n/2.\]
\end{remark}

\begin{lemma}\label{div}
Fix $s > n/2$ and let $f \in H^{s}_{\sigma}$ and $g \in H^s$. Then
\[ \| (f \cdot \nabla) g \|_{H^{s-1}} \leq C \|f\|_{H^s} \|g\|_{H^s}.\]
\end{lemma}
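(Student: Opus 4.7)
My plan is to exploit the divergence-free condition on $f$ to move one derivative off of $g$ and onto the product $f \otimes g$, then close the estimate using the algebra property of $H^s$ already recorded in Remark \ref{prodhs}.

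First I would observe that since $\nabla \cdot f = 0$, one has the identity
\begin{equation*}
(f \cdot \nabla) g \;=\; \nabla \cdot (f \otimes g),
\end{equation*}
interpreting the right-hand side componentwise as $\partial_j(f_j g_i)$. This identity is the only place where the hypothesis $f \in H^{s}_{\sigma}$ really enters.

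Next I would invoke Remark \ref{gradhs}: the divergence is a first-order differential operator, so
\begin{equation*}
\|(f \cdot \nabla) g\|_{H^{s-1}} \;=\; \|\nabla \cdot (f \otimes g)\|_{H^{s-1}} \;\leq\; C\,\|f \otimes g\|_{H^{s}}.
\end{equation*}
Finally, since $s > n/2$, $H^{s}(\mathbb{R}^n)$ is an algebra by Remark \ref{prodhs}, so each component $f_j g_i$ satisfies $\|f_j g_i\|_{H^{s}} \leq C \|f_j\|_{H^s}\|g_i\|_{H^s}$, and summing over components yields
\begin{equation*}
\|f \otimes g\|_{H^{s}} \;\leq\; C\,\|f\|_{H^{s}}\,\|g\|_{H^{s}},
\end{equation*}
which combined with the previous bound gives the claim.

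There is no real obstacle here; the proof is a short combination of the three ingredients already collected in the preliminaries (divergence-free rewriting, Remark \ref{gradhs}, and the $H^s$-algebra property). The only subtle point worth emphasising is that the estimate would lose a factor and require a full Kato–Ponce commutator estimate if $f$ were not divergence-free, since then one could not absorb the derivative hitting $g$ into the product structure in such a clean way.
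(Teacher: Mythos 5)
Your proposal is correct and follows exactly the paper's own argument: rewrite $(f \cdot \nabla) g = \nabla \cdot (f \otimes g)$ using the divergence-free condition, lose one derivative via the divergence, and close with the $H^s$-algebra property from Remark \ref{prodhs}. The paper states this in two lines; you have simply spelled out the same steps in more detail.
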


\begin{proof}
Being in $H^{s}_{\sigma},$ $f$ is divergence free, and hence $(f \cdot \nabla) g = \nabla \cdot(f \otimes g)$. Rest of the proof is straightforward, since $H^s$ is an algebra for $s > n/2$.
\end{proof}

\begin{lemma}\label{Si}
$($Sobolev Inequality$)$ For $f \in H^{s}(\mathbb R^{n})$, we have
\[ \|f\|_{L^{q}(\mathbb R^{n})} \leq C_{n, s, q} \|f\|_{H^s(\mathbb R^{n})}\]
provided that q lies in the following range
\begin{enumerate}
\item[(i)]
if $s < n/2$, then $2 \leq q \leq \frac{2n}{n - 2s}$.
\item[(ii)]
if $s = n/2$, then $2 \leq q < \infty$.
\item[(iii)]
if $s > n/2$, then $2 \leq q \leq \infty$.
\end{enumerate}
\end{lemma}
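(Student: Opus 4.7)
The three cases have very different flavors, so the plan is to treat them separately and reduce (i) and (ii) to standard harmonic-analytic tools, while (iii) can be handled by a direct Fourier argument and interpolation.

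\textbf{Case (iii), $s>n/2$.} The plan is to argue via Fourier inversion. Since $f=(I-\Delta)^{-s/2}J^s f$, writing $\widehat{f}(\xi)=(1+|\xi|^2)^{-s/2}\widehat{J^s f}(\xi)$ and applying Cauchy--Schwarz gives
\[
\|\widehat{f}\|_{L^1}\le \left(\int_{\mathbb{R}^n}(1+|\xi|^2)^{-s}\,d\xi\right)^{1/2}\|J^s f\|_{L^2}.
\]
The integral converges precisely because $2s>n$, so $\|f\|_{L^\infty}\le C\|\widehat{f}\|_{L^1}\le C_{n,s}\|f\|_{H^s}$. Together with the trivial bound $\|f\|_{L^2}=\|\widehat{f}\|_{L^2}\le \|f\|_{H^s}$, classical log-convexity ($\|f\|_{L^q}\le \|f\|_{L^2}^{2/q}\|f\|_{L^\infty}^{1-2/q}$) yields the full range $2\le q\le\infty$.

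\textbf{Case (i), $s<n/2$.} Here I would appeal to the representation of the Bessel potential as convolution with the Bessel kernel $G_s$, which satisfies $G_s(x)\lesssim |x|^{s-n}$ near the origin and decays exponentially at infinity; in particular $G_s$ is dominated by the Riesz kernel of order $s$. Writing $f=G_s*(J^s f)$ and invoking the Hardy--Littlewood--Sobolev inequality with exponents $(2,q,n/(n-s))$ delivers
\[
\|f\|_{L^{2n/(n-2s)}}\le C\|J^s f\|_{L^2}=C\|f\|_{H^s}.
\]
Interpolating with the trivial $L^2$ bound then covers every $q\in[2,2n/(n-2s)]$.

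\textbf{Case (ii), $s=n/2$.} For any $q\in[2,\infty)$ pick $\varepsilon>0$ small enough that $s-\varepsilon<n/2$ and $q\le 2n/(n-2(s-\varepsilon))$; applying case (i) with exponent $s-\varepsilon$ gives $\|f\|_{L^q}\le C\|f\|_{H^{s-\varepsilon}}\le C\|f\|_{H^{n/2}}$, with the constant depending on $q$ but not on $f$. This handles the range $2\le q<\infty$ as required. (The failure at $q=\infty$ is consistent with the strict inequality in the hypothesis of case (iii).)

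The only genuinely delicate ingredient is the Hardy--Littlewood--Sobolev inequality needed in case (i); everything else is elementary Fourier analysis plus $L^p$ interpolation. Since this is a textbook embedding result, I would simply cite a standard reference (e.g.\ Stein, \emph{Singular Integrals}, Chapter V) rather than reproduce the HLS argument in full.
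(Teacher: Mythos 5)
Your argument is correct. Note, however, that the paper does not prove this lemma at all: it simply records the statement and refers the reader to Kesavan's book, so there is no ``paper proof'' to compare against in detail. Your three-case treatment is the standard textbook route: for $s>n/2$ the Cauchy--Schwarz bound $\|\widehat f\|_{L^1}\le \bigl(\int (1+|\xi|^2)^{-s}\,d\xi\bigr)^{1/2}\|J^sf\|_{L^2}$ is exactly where the condition $2s>n$ enters, and the log-convexity interpolation with the trivial $L^2$ bound correctly fills in $2\le q\le\infty$; for $s<n/2$ the reduction of the Bessel kernel to the Riesz kernel (using its exponential decay at infinity and the $|x|^{s-n}$ singularity at the origin) followed by Hardy--Littlewood--Sobolev gives the endpoint $q=2n/(n-2s)$, and interpolation with $L^2$ gives the rest; and the critical case $s=n/2$ is correctly handled by monotonicity of the $H^\sigma$ norms in $\sigma$ together with case (i) applied at $s-\varepsilon$, with the constant legitimately allowed to depend on $q$. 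The only thing you leave to a citation is HLS itself, which is reasonable for a preliminary lemma of this kind; in effect your write-up is more self-contained than the paper's, which outsources the entire statement.
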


\noindent For details see Kesavan \cite{Ks}.

\begin{remark}\label{r23}
We deduce the following result using Lemma \ref{Si}. For $n = 2$, we use  H\"{o}lder's inequality with exponents $2/\epsilon$ and $2/(1-\epsilon$), and Sobolev inequality for $0 <\epsilon< s-1$ to obtain
\[ \|fg\|_{L^2} \leq \|f\|_{L^{2/ {\epsilon}}}\|g\|_{L^{2/1-{\epsilon}}} \leq C \|f\|_{{\dot{H}}^{1-{\epsilon}}} \|g\|_{{\dot{H}}^{\epsilon}}  \leq C \|f\|_{H^1} \|g\|_{{H}^{s-1}}.\]

For $n = 3$, we use H\"{o}lder's inequality with exponents $6$ and $3$, and Sobolev inequality to obtain
\[ \|fg\|_{L^2} \leq \|f\|_{L^6} \|g\|_{L^3} \leq C \|f\|_{{\dot{H}}^{1}} \|g\|_{{\dot{H}}^{1/2}} \leq C \|f\|_{H^1} \|g\|_{H^{1/2}} \leq C \|f\|_{H^1} \|g\|_{{H}^{s-1}}.\]
We note that for both 2D and 3D we have the same estimate.
\end{remark}

\begin{lemma}\label{iss}
$($Interpolation in Sobolev spaces$)$. Given $s > 0,$ there exists a constant C depending on s, so that for all $f \in H^{s}(\mathbb R^{n})$ and $0 < s' < s,$
\[\|f\|_{H^{s'}} \leq C \|f\|_{L^2}^{1-s'/s}\|f\|^{s'/s}_{H^{s}}.\]
\end{lemma}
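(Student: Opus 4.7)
The plan is to pass to the Fourier side, where the three norms $\|f\|_{L^2}$, $\|f\|_{H^{s'}}$, $\|f\|_{H^{s}}$ are all weighted $L^2$ norms of $\widehat{f}$ with weights $1$, $(1+|\xi|^2)^{s'/2}$, $(1+|\xi|^2)^{s/2}$, and then extract the claimed geometric-mean inequality from a single application of H\"older's inequality.

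First I would use Plancherel together with the defining identity \eqref{hs} to write
\[
\|f\|_{H^{s'}}^{2} = \int_{\mathbb{R}^{n}} (1+|\xi|^{2})^{s'}\,|\widehat{f}(\xi)|^{2}\,d\xi.
\]
The key algebraic observation is the pointwise factorisation
\[
(1+|\xi|^{2})^{s'}\,|\widehat{f}(\xi)|^{2} = \bigl(|\widehat{f}(\xi)|^{2}\bigr)^{1-s'/s} \cdot \bigl((1+|\xi|^{2})^{s}|\widehat{f}(\xi)|^{2}\bigr)^{s'/s},
\]
valid since $0 < s'/s < 1$: the exponents on $|\widehat{f}|^{2}$ sum to $1$, and the exponents on $(1+|\xi|^{2})$ sum to $s\cdot (s'/s) = s'$.

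Next, I would apply H\"older's inequality with conjugate exponents $p = s/(s-s')$ and $q = s/s'$, which satisfy $1/p + 1/q = 1$, to the factorisation above:
\[
\int_{\mathbb{R}^{n}} (1+|\xi|^{2})^{s'}|\widehat{f}|^{2}\,d\xi \leq \left(\int_{\mathbb{R}^{n}} |\widehat{f}|^{2}\,d\xi\right)^{1-s'/s}\left(\int_{\mathbb{R}^{n}} (1+|\xi|^{2})^{s}|\widehat{f}|^{2}\,d\xi\right)^{s'/s}.
\]
Recognising the two factors on the right as $\|f\|_{L^{2}}^{2(1-s'/s)}$ and $\|f\|_{H^{s}}^{2s'/s}$ respectively, and then taking square roots, yields
\[
\|f\|_{H^{s'}} \leq \|f\|_{L^{2}}^{1-s'/s}\,\|f\|_{H^{s}}^{s'/s},
\]
with constant $C = 1$.

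There is essentially no obstacle here; the only points requiring attention are the verification of the pointwise algebraic identity and the correct choice of H\"older exponents. The argument is robust: it does not require $s'$ or $s$ to be integers, and it is independent of the spatial dimension $n$, so the same constant works uniformly in $n = 2, 3$.
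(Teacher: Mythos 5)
Your argument is correct. The pointwise factorisation is valid, the exponents $p=s/(s-s')$ and $q=s/s'$ are indeed conjugate, and H\"older applied to the two factors reproduces exactly $\|f\|_{L^2}^{2(1-s'/s)}$ and $\|f\|_{H^s}^{2s'/s}$, so the square root gives the stated inequality. The only point worth flagging is the claim $C=1$: this presumes the unitary normalisation of the Fourier transform in Plancherel's theorem and the paper's convention \eqref{hs} for the $H^s$-norm; with a different normalisation a harmless absolute constant appears, which is still consistent with the statement of the lemma. The paper itself does not prove this lemma directly --- it defers to Theorem 9.6 and Remark 9.1 of Lions--Magenes, where the result is obtained from the abstract theory of interpolation spaces (identifying $H^{s'}$ as an intermediate space between $L^2$ and $H^s$ via domains of fractional powers of a self-adjoint operator). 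Your route is genuinely different and more elementary: it is entirely self-contained, uses nothing beyond Plancherel and H\"older, and makes transparent why the result holds for arbitrary real $0<s'<s$ and in any dimension. What the abstract approach buys in exchange is generality --- it covers interpolation between Sobolev spaces on domains and manifolds, trace theorems, and non-Hilbertian scales where no clean Fourier-multiplier description of the norm is available --- but for the whole-space $L^2$-based setting used in this paper your direct proof is entirely adequate and arguably preferable.
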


\noindent For details see \cite{AF} and for proof see Theorem 9.6, Remark 9.1 of \cite{LM}. 

\begin{lemma}\label{gni}
(Gagliardo-Nirenberg interpolation inequality  \cite{Ni}) Let $g \in L^{q}(\mathbb {R}^n)$ and its derivatives of order m, $D^{m} g \in L^{r}(\mathbb {R}^n)$, $1 \leq q, r \leq \infty$. For the derivatives $D^{j} g,$ $0 \leq j < m,$ the following inequality holds,

\[\|D^{j} g\|_{L^p} \leq C \|D^{m} g\|_{L^r}^{a} \|g\|_{L^q}^{1-a},\]
where
\[\frac{1}{p} = \frac{j}{n} + \left( \frac{1}{r} - \frac{m}{n} \right)a + \frac{1-a}{q},\]
for all $a$ in the interval
\[\frac{j}{m}\leq a < 1.\]
The constant $C$ depends only on $n, m, j, q, r, a.$
\end{lemma}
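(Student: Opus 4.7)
The plan is to follow Nirenberg's classical argument: first establish the base case $j=0$, $m=1$, namely
\[
\|g\|_{L^{p}} \leq C\|\nabla g\|_{L^{r}}^{a}\|g\|_{L^{q}}^{1-a}, \qquad \tfrac{1}{p} = \tfrac{1-a}{q} + a\bigl(\tfrac{1}{r}-\tfrac{1}{n}\bigr),
\]
and then bootstrap to general $j$ and $m$ by iteration in the order of the derivative.

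For the base case I would split into three regimes according to the size of $r$ relative to $n$. When $1\leq r < n$, the Sobolev embedding gives $\|g\|_{L^{r^{\ast}}} \leq C\|\nabla g\|_{L^{r}}$ with $r^{\ast} = nr/(n-r)$, and a direct H\"older interpolation of $L^{p}$ between $L^{q}$ and $L^{r^{\ast}}$ produces the inequality for every admissible $a$. The supercritical range $r > n$ is handled by Morrey's embedding $W^{1,r}\hookrightarrow L^{\infty}$ followed by Lebesgue interpolation, while the critical threshold $r = n$ requires a separate argument, for instance via $BMO$ and the John--Nirenberg inequality, or via a Littlewood--Paley decomposition combined with Bernstein's inequality.

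To pass from the base case to general $j<m$, I would proceed by induction on $m-j$: apply the base inequality to $D^{m-1}g$ together with interpolation of intermediate Lebesgue norms of derivatives, choosing the exponents so that the scaling relation between $p,q,r$ is preserved at each step. An equivalent route is the pure frequency-space proof: decompose $g = \sum_{k}\triangle_{k}g$ into Littlewood--Paley pieces, use Bernstein's inequality to trade derivatives for powers of the frequency $2^{k}$, and split the sum at a cutoff chosen so the high- and low-frequency tails balance.

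The main obstacle is not any single inequality but the parameter bookkeeping: one must verify that the relation $1/p = j/n + (1/r - m/n)a + (1-a)/q$ is exactly what scaling $g(x)\mapsto g(\lambda x)$ forces on $\mathbb{R}^{n}$, and that the full range $j/m \leq a < 1$ is produced by the iteration, while avoiding the excluded endpoint $a = 1$ (which would amount to controlling $\|D^{j}g\|_{L^{p}}$ by $\|D^{m}g\|_{L^{r}}$ alone and generally fails on unbounded domains). The genuinely delicate step is the critical regime $r = n$ and the endpoints $q,r\in\{1,\infty\}$; away from these points the argument reduces to H\"older combined with Sobolev.
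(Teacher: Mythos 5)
The paper does not actually prove this lemma: it is stated as a classical result and attributed directly to Nirenberg's 1959 paper \cite{Ni}, so there is no in-paper argument to compare yours against. Your outline is the standard route to the inequality and is strategically sound --- base case $j=0$, $m=1$ via Sobolev embedding plus H\"older interpolation of $L^{p}$ between $L^{q}$ and $L^{r^{\ast}}$, then an induction on the order of differentiation, with the scaling relation $g(x)\mapsto g(\lambda x)$ dictating the exponent identity. Two places in the sketch are genuinely too thin to count as a proof, though. First, in the regime $r>n$ you invoke ``Morrey's embedding $W^{1,r}\hookrightarrow L^{\infty}$,'' but the hypotheses only give $g\in L^{q}$ and $D g\in L^{r}$, not $g\in L^{r}$, so the inhomogeneous embedding is not applicable as stated; the correct argument is local (bound $\|g\|_{L^{\infty}(Q)}$ on a cube of side $\rho$ by $\rho^{-n/q}\|g\|_{L^{q}(Q)}+\rho^{1-n/r}\|Dg\|_{L^{r}(Q)}$ and optimize over $\rho$), which is exactly what produces the multiplicative form with the right exponent $a$. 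Second, the critical case $r=n$ and the whole inductive step are deferred rather than carried out, and the induction is where essentially all of the bookkeeping (and the verification that the full range $j/m\leq a<1$ is attained) lives. For the purposes of this paper none of this matters --- the lemma is used only through the two explicit instances \eqref{n2} and the $n=3$ analogue in Theorem \ref{buc1}, both with $q=r=2$, where the elementary Fourier-side proof suffices --- but as a freestanding proof of the lemma your text is an accurate plan rather than a complete argument.
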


\subsection{Commutator Estimates.}

Let $f$ and $g$ are Schwartz class functions. Then for $s \geq 0$ we define,
\[[J^s, f]g = J^s(fg) - f(J^{s}g),\]
and
\begin{align}\label{cef}
[J^s, f] \nabla g = J^s((f \cdot \nabla)g) - (f \cdot \nabla)J^{s}g.
\end{align}

where $[J^s, f] = J^{s}f - f J^{s}$ is the commutator, in which $f$ is regarded as a multiplication operator.

\begin{lemma}\label{kpe}
 For $s \geq 0,$ and $1 < p < \infty$, we have a basic estimate
\[\|[J^s, f]g\|_{L^p} \leq C\left( \|\nabla f\|_{L^{\infty}}\|J^{s-1}g\|_{L^p} + \|J^{s}f\|_{L^p}\|g\|_{L^{\infty}}\right),\]

where C is a constant depending only on $n, p, s.$
\end{lemma}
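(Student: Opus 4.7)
The plan is to combine Bony's paraproduct decomposition with Littlewood-Paley theory. Let $\Delta_j$ denote the dyadic frequency projectors and $S_j=\sum_{k\le j-1}\Delta_k$. Writing Bony's decomposition
\[ fg = T_f g + T_g f + R(f,g),\quad T_f g:=\sum_j S_{j-3}f\,\Delta_j g,\quad R(f,g):=\sum_{|j-k|\le 2}\Delta_j f\,\Delta_k g, \]
and applying the analogous decomposition to $f\cdot J^s g$, I would rearrange $[J^s,f]g$ as the sum of three contributions: the paraproduct commutator $\sum_j[J^s,S_{j-3}f]\Delta_j g$ coming from the high-low piece, a smoothing term $J^s(T_g f) - T_g(J^s f)$ from the low-high piece, and an analogous contribution from $R(f,g)$.

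For the low-high and remainder pieces the factor carrying high frequencies is $\Delta_j f$ (respectively $\Delta_j f$ paired with $\Delta_k g$ at comparable scales), so $J^s$ acts essentially like multiplication by $2^{sj}$, and a direct Coifman-Meyer / Fefferman-Stein vector-valued maximal function argument gives
\[ \|J^s(T_g f) - T_g(J^s f)\|_{L^p} + \|J^s R(f,g) - R(f,J^s g)\|_{L^p} \lesssim \|g\|_{L^\infty}\,\|J^s f\|_{L^p}. \]
The core estimate is therefore the paraproduct commutator. For each $j$, since $\Delta_j g$ has frequency $\sim 2^j$, one represents $[J^s,S_{j-3}f]\Delta_j g$ as a kernel operator built from the Bessel multiplier $(1+|\xi|^2)^{s/2}$; expanding $S_{j-3}f(x)-S_{j-3}f(y)$ by the mean-value identity $\int_0^1 \nabla S_{j-3}f\bigl(y+t(x-y)\bigr)\cdot(x-y)\,dt$ and using the decay of the rescaled kernel produces the single-scale bound
\[ \|[J^s,S_{j-3}f]\Delta_j g\|_{L^p} \lesssim \|\nabla f\|_{L^\infty}\,2^{(s-1)j}\,\|\Delta_j g\|_{L^p}. \]
Summing in $j$ via the Littlewood-Paley square-function characterization $\|J^{s-1}g\|_{L^p}\sim \|(\sum_j 4^{(s-1)j}|\Delta_j g|^2)^{1/2}\|_{L^p}$, together with the Fefferman-Stein maximal inequality, yields the desired control by $\|\nabla f\|_{L^\infty}\|J^{s-1}g\|_{L^p}$.

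The main technical obstacle is the single-scale commutator kernel estimate: one must show that, when restricted to functions with frequency $\sim 2^j$, the commutator with $J^s$ gains one derivative on $f$ at the cost of one factor of $2^{-j}$. This rests on Mikhlin-type bounds for the Bessel multiplier $(1+|\xi|^2)^{s/2}$ (after frequency localization) and careful tracking of Fourier supports so that the Taylor remainder really does convert $J^s$ into $J^{s-1}$. Once this single-scale estimate is in hand, the remainder of the argument is routine Littlewood-Paley bookkeeping, and the resulting inequality is the classical Kato-Ponce commutator estimate as stated.
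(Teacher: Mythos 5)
Your argument is sound, but it is worth noting that the paper does not actually prove Lemma \ref{kpe}: it is quoted as a known result and the proof is deferred to the appendix of Kato--Ponce \cite{KP}. The original argument there is genuinely different from yours. Kato and Ponce work directly with the bilinear symbol $\sigma(\xi,\eta)=(1+|\xi+\eta|^2)^{s/2}-(1+|\eta|^2)^{s/2}$ of the commutator, split the $(\xi,\eta)$-plane into the regions $|\xi|\ll|\eta|$ and its complement, and verify that on each region the appropriately renormalized symbol (divided by $|\xi|(1+|\eta|^2)^{(s-1)/2}$, respectively by $(1+|\xi|^2)^{s/2}$) satisfies the hypotheses of the Coifman--Meyer bilinear multiplier theorem; the two regions produce exactly the two terms $\|\nabla f\|_{L^\infty}\|J^{s-1}g\|_{L^p}$ and $\|J^s f\|_{L^p}\|g\|_{L^\infty}$. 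Your paraproduct proof is the modern, self-contained counterpart: the three Bony pieces are precisely the discrete realizations of those frequency regions, and the single-scale kernel bound you isolate (mean-value expansion of $S_{j-3}f$ against the kernel of $J^s\widetilde{\Delta}_j$, with $\|\,|\cdot|K_j\|_{L^1}\lesssim 2^{j(s-1)}$) replaces the Coifman--Meyer black box. What the symbol approach buys is brevity once the multiplier theorem is granted; what yours buys is transparency and the flexibility to track endpoint and vector-valued refinements. Two small points to tidy up: the cross term produced by decomposing $f\cdot J^sg$ is $T_{J^sg}f$, not $T_g(J^sf)$, so you must estimate $\|S_{j-3}(J^sg)\|_{L^\infty}\lesssim 2^{js}\|g\|_{L^\infty}$ via Bernstein and a geometric sum (which needs $s>0$; for $s=0$ the commutator vanishes identically, so nothing is lost); and since $J^s$ is the inhomogeneous Bessel potential you should use the inhomogeneous Littlewood--Paley decomposition, treating the lowest-frequency block separately --- both are routine, and with them your proof closes.
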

\noindent For proof see the appendix of \cite{KP}.

\subsection{$BMO$ space and Logarithmic Sobolev inequality.}                                                                      
\begin{definition}
\textbf{The space $BMO$}(Bounded Mean Oscillation) is the Banach space of all functions $f \in L^{1}_{loc}(\mathbb R^{n})$ for which
\[\|f\|_{BMO} = \sup_{Q} \left( \frac{1}{|Q|} \int_{Q} \left| f(x) - f_{Q}\right| \,dx \right) < \infty,\]
where the sup ranges over all cubes $Q \subset \mathbb R^{n},$ and $ f_{Q}$ is the mean of f over $Q$.
\end{definition}
\noindent For more details see \cite{Fn}.

The space $BMO$ has two distinct advantageous properties compared to $L^{\infty}$. The first being the Riesz transforms are bounded in $BMO$ and secondly the singular integral operators of the Calderon-Zygmund type are also bounded in $BMO$. Hence, one can show that $\|\nabla \mathbf u\|_{BMO} \leq C \|\nabla \times \mathbf u\|_{BMO}$ (see \cite{KT}).

It is well known that the Sobolev space $W^{s, p}$ is embedded continuously into $L^{\infty}$ for $sp>n$.  However this embedding is false in the space $W^{k, r}$ when $kr=n$. Brezis-Gallouet \cite{BG} and Brezis-Wainger \cite{BW} provided the following inequality which relates the function spaces $L^{\infty}$ and $W^{s, p}$ at the critical value and was used to prove the existence of global solutions to the nonlinear Schr\"{o}dinger equations.
\begin{lemma}
Let $sp >n$. Then 
\[\|f\|_{L^{\infty}} \leq C \left( 1 + \log^{\frac{r-1}{r}} (1+\|f\|_{W^{s, p}})\right),\]
provided $\|f\|_{W^{k, r}} \leq 1$ for $kr=n$.
\end{lemma}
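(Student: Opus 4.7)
The plan is to decompose $f$ using a Littlewood--Paley partition of unity, writing $f = S_0 f + \sum_{j \geq 0} \Delta_j f$, and then bound the resulting sum by splitting at some dyadic level $N$ that will be optimized in terms of $\|f\|_{W^{s,p}}$ at the end. Thus I decompose
\[
\|f\|_{L^\infty} \leq \|S_0 f\|_{L^\infty} + \sum_{j=0}^{N} \|\Delta_j f\|_{L^\infty} + \sum_{j>N} \|\Delta_j f\|_{L^\infty},
\]
where the $S_0 f$ term is trivially controlled by $\|f\|_{W^{k,r}} \leq 1$, so the task reduces to estimating the two dyadic sums.

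For the high-frequency tail, Bernstein's inequality gives $\|\Delta_j f\|_{L^\infty} \leq C\, 2^{jn/p}\|\Delta_j f\|_{L^p} \leq C\, 2^{j(n/p - s)}\|f\|_{W^{s,p}}$, and since $sp > n$ the exponent is strictly negative, so the geometric series is controlled by $C\, 2^{-N(s-n/p)}\|f\|_{W^{s,p}}$. For the middle piece I again apply Bernstein to obtain $\|\Delta_j f\|_{L^\infty} \leq C\, 2^{jn/r}\|\Delta_j f\|_{L^r} = C\, 2^{jk}\|\Delta_j f\|_{L^r}$, using $n/r = k$. Applying H\"older's inequality to the sum over the discrete index $j$ with the conjugate exponents $r$ and $r' = r/(r-1)$ yields
\[
\sum_{j=0}^{N} \|\Delta_j f\|_{L^\infty} \leq (N+1)^{(r-1)/r}\left(\sum_{j \geq 0} 2^{jkr}\|\Delta_j f\|_{L^r}^r\right)^{1/r}.
\]
The $\ell^r$-sum on the right is bounded by $C\|f\|_{W^{k,r}} \leq C$ via the Littlewood--Paley (square-function) characterization of Bessel potential spaces, combined with the embedding $\ell^2 \hookrightarrow \ell^r$ for $r \geq 2$ or its dual version for $1 < r < 2$.

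Assembling the three bounds gives
\[
\|f\|_{L^\infty} \leq C\bigl(1 + (N+1)^{(r-1)/r} + 2^{-N(s-n/p)}\|f\|_{W^{s,p}}\bigr).
\]
Choosing $N$ to balance the remaining terms, namely $N \sim (s-n/p)^{-1}\log(1+\|f\|_{W^{s,p}})$, yields the claimed inequality with the exponent $(r-1)/r$ on the logarithm. The main technical obstacle is the step that converts the discrete $\ell^r$-sum $\sum_{j} 2^{jkr}\|\Delta_j f\|_{L^r}^r$ into a bound by $\|f\|_{W^{k,r}}^r$: for $r=2$ this is just Plancherel, but for general $r \neq 2$ it requires the full Littlewood--Paley theory (vector-valued Calder\'on--Zygmund estimates and the Fefferman--Stein inequalities), while the final optimization in $N$ is a routine calculus exercise.
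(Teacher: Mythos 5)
The paper offers no proof of this lemma: it is quoted as background from Brezis--Gallouet and Brezis--Wainger, with the proof left to those references, so there is no in-house argument to measure you against. Your Littlewood--Paley scheme is the standard modern route to inequalities of this type, and most of it is sound: the low-frequency block is controlled by $\|f\|_{W^{k,r}}\le 1$ via Bernstein, the high-frequency tail converges geometrically because $s-n/p>0$, and the optimisation $N\sim (s-n/p)^{-1}\log(1+\|f\|_{W^{s,p}})$ does produce the exponent $(r-1)/r=1/r'$ from the factor $(N+1)^{1/r'}$. For $r=2$ your argument is essentially the original Brezis--Gallouet proof (Plancherel plus a frequency cutoff), and for $r>2$ it goes through as well.

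The gap is in the middle block, and it is genuine for $1<r<2$. The inequality
\[
\Bigl(\sum_{j\ge 0}2^{jkr}\|\Delta_j f\|_{L^r}^r\Bigr)^{1/r}\le C\,\|f\|_{W^{k,r}}
\]
is exactly the embedding $W^{k,r}=F^k_{r,2}\hookrightarrow B^k_{r,r}$, which holds if and only if $r\ge 2$: your reduction uses the pointwise bound $\|\{g_j(x)\}\|_{\ell^r}\le\|\{g_j(x)\}\|_{\ell^2}$, valid only for $r\ge 2$, followed by integration and Fubini. For $1<r<2$ the inequality between the sequence norms reverses, $B^k_{r,r}$ is strictly smaller than $F^k_{r,2}$, and there is no ``dual version'' of $\ell^2\hookrightarrow\ell^r$ that rescues the step. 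If you instead apply H\"older in $j$ with exponent $2$ (legitimate for $r\le 2$, since Minkowski's integral inequality gives $\|\{g_j\}\|_{\ell^2(L^r)}\le\|\{g_j\}\|_{L^r(\ell^2)}$), you only obtain a factor $(N+1)^{1/2}$ and hence $\log^{1/2}$, which is strictly weaker than the asserted $\log^{(r-1)/r}$ because $(r-1)/r<1/2$ when $r<2$. So your argument proves the lemma for $r\ge 2$ but not in the stated generality; the full range $1<r<\infty$ is what Brezis--Wainger obtain by a different, rearrangement- and convolution-kernel--based argument rather than by dyadic decomposition. None of this affects the rest of the paper, since the blow-up proofs rely on the Kozono--Taniuchi $BMO$ inequality rather than on this lemma, but as a proof of the statement as written the case $1<r<2$ is missing.
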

Similar embedding was investigated by Beale-Kato-Majda \cite{BKM} for vector functions to obtain the blow-up criterion of the solutions to the Euler equations.
\begin{lemma}\label{lsi}
Let $s > \frac{n}{p} + 1,$ then we have
\[\|\nabla f\|_{L^{\infty}} \leq C \left( 1 + \|\nabla \cdot f\|_{L^{\infty}} + \|\nabla \times f\|_{L^{\infty}}\left( 1 + log (e + \|f\|_{W^{s, p}})\right)\right),\]
 for all $f \in W^{s, p}(\mathbb R^{n}).$
\end{lemma}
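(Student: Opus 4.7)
\bigskip

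\noindent\textbf{Proof proposal.} The plan is to use a Littlewood-Paley decomposition of $f$, split the dyadic sum into low, middle and high frequency regimes, and then optimise the cut-off to produce the logarithm. Let $\{\Delta_j\}_{j\geq -1}$ be the standard inhomogeneous Littlewood-Paley projectors, so that $f=\sum_{j\geq -1}\Delta_j f$ with $\Delta_j f$ frequency-localised near $|\xi|\sim 2^j$ for $j\geq 0$. For a positive integer $N$ to be chosen, I would write
\[
\|\nabla f\|_{L^{\infty}}\leq \|\Delta_{-1}\nabla f\|_{L^{\infty}}+\sum_{j=0}^{N}\|\Delta_{j}\nabla f\|_{L^{\infty}}+\sum_{j>N}\|\Delta_{j}\nabla f\|_{L^{\infty}},
\]
and estimate each of the three pieces separately.

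For the low frequency block $\Delta_{-1}\nabla f$, Bernstein's inequality together with the embedding $W^{s,p}\hookrightarrow L^{p}$ bounds it by a constant multiple of $\|f\|_{W^{s,p}}$, which will be absorbed into the additive $1$ after the logarithm step. For the high frequency tail, Bernstein gives $\|\Delta_{j}\nabla f\|_{L^{\infty}}\leq C\,2^{j(1+n/p)}\|\Delta_{j}f\|_{L^{p}}$, and since $s>n/p+1$ the series $\sum_{j>N}2^{j(1+n/p-s)}2^{js}\|\Delta_{j}f\|_{L^{p}}$ is a geometric tail of ratio $2^{-(s-1-n/p)}$ dominated by $2^{-N(s-1-n/p)}\|f\|_{W^{s,p}}$. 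The heart of the argument is the middle block. Here I would exploit the elliptic identity $\Delta f=\nabla(\nabla\cdot f)-\nabla\times(\nabla\times f)$, which yields, after applying $\Delta_j\nabla\Delta^{-1}$, a representation
\[
\Delta_{j}\nabla f = \Delta_{j}\nabla\nabla\Delta^{-1}(\nabla\cdot f)-\Delta_{j}\nabla\nabla\Delta^{-1}(\nabla\times f).
\]
Because the operators $\nabla\nabla\Delta^{-1}\Delta_{j}$ are convolutions with kernels whose $L^{1}$ norms are bounded uniformly in $j$ (their Fourier symbols are smooth homogeneous of degree $0$ on the dyadic annulus where $\Delta_j$ is supported), each such block is controlled by $\|\nabla\cdot f\|_{L^{\infty}}+\|\nabla\times f\|_{L^{\infty}}$. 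Summing over $0\leq j\leq N$ gives a contribution of size $C N(\|\nabla\cdot f\|_{L^{\infty}}+\|\nabla\times f\|_{L^{\infty}})$.

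Combining these three estimates yields
\[
\|\nabla f\|_{L^{\infty}}\leq C\Bigl(1+N\bigl(\|\nabla\cdot f\|_{L^{\infty}}+\|\nabla\times f\|_{L^{\infty}}\bigr)+2^{-N(s-1-n/p)}\|f\|_{W^{s,p}}\Bigr).
\]
Choosing $N$ to be the smallest integer with $2^{N(s-1-n/p)}\geq e+\|f\|_{W^{s,p}}$, so that $N\sim \log(e+\|f\|_{W^{s,p}})$, balances the last two terms and produces the stated logarithmic inequality after absorbing the $\nabla\cdot f$ piece into its own linear term.

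The main obstacle is step controlling the middle block: one must justify that the Riesz-type operators $\partial_i\partial_k\Delta^{-1}$, which are not bounded on $L^{\infty}$ in general, become bounded with a constant independent of $j$ once composed with a single Littlewood-Paley projector $\Delta_j$. This is done via Young's inequality after writing the composed operator as convolution with a Schwartz kernel obtained from the smooth symbol on the dyadic annulus; the uniformity in $j$ comes from the scale invariance of the symbol. Once this uniform bound is in place, the rest of the argument is a standard optimisation, and the proof can also be carried out by invoking the Brezis-Wainger / Beale-Kato-Majda / Kozono-Taniuchi machinery cited in the preceding lemmas.
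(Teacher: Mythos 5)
First, a point of reference: the paper does not actually prove Lemma \ref{lsi} --- it is quoted from Beale--Kato--Majda \cite{BKM} (with the $BMO$ refinement, Lemma \ref{kte}, cited separately to \cite{KT}) --- so any self-contained argument is necessarily ``a different route''. Your Littlewood--Paley skeleton is the standard modern proof of inequalities of this type, and its two central steps are sound: the uniform-in-$j$ $L^{1}$ bound for the kernels of $\Delta_j\partial_k\partial_l\Delta^{-1}$ for $j\ge 0$ (by rescaling the smooth symbol supported on the annulus $|\xi|\sim 2^{j}$), which gives the middle-block contribution $CN\bigl(\|\nabla\cdot f\|_{L^{\infty}}+\|\nabla\times f\|_{L^{\infty}}\bigr)$, and the geometric tail $2^{-N(s-1-n/p)}\|f\|_{W^{s,p}}$ via Bernstein and $W^{s,p}\hookrightarrow B^{s}_{p,\infty}$, which after choosing $N\sim\log(e+\|f\|_{W^{s,p}})$ produces the logarithm.

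The two ``absorption'' claims, however, are genuine gaps. (i) Low frequencies: Bernstein gives $\|\Delta_{-1}\nabla f\|_{L^{\infty}}\le C\|f\|_{L^{p}}\le C\|f\|_{W^{s,p}}$, which is unbounded over the admissible class and cannot be absorbed into the additive constant $1$; nor can you push $j=-1$ through the elliptic identity, since the symbol $\phi(\xi)\xi_k\xi_l/|\xi|^{2}$ is not smooth at the origin, its kernel decays only like $|x|^{-n}$, and the corresponding operator is not bounded on $L^{\infty}$. This is exactly why the rigorous statement in \cite{BKM} carries an additive lower-order term ($+C\|\nabla\times f\|_{L^{2}}$ there, $+C\|f\|_{L^{p}}$ in your setting); your argument only proves the lemma with such a term appended. (ii) The divergence part: your middle block produces $N\|\nabla\cdot f\|_{L^{\infty}}$ with $N\sim\log(e+\|f\|_{W^{s,p}})$ unbounded, so it cannot be ``absorbed into its own linear term''. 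Indeed no argument can achieve this: taking $f=\nabla h$ with $h$ a regularisation of $x_1x_2\log|x|$ gives $\nabla\times f=0$ and $\|\nabla\cdot f\|_{L^{\infty}}=\|\Delta h\|_{L^{\infty}}$ uniformly bounded while $\|\nabla f\|_{L^{\infty}}$ is arbitrarily large, so the divergence term must also carry the logarithmic factor. What your computation actually establishes --- and what is true, and is all that is ever used here, since the lemma is only applied to divergence-free fields --- is the estimate $\|\nabla f\|_{L^{\infty}}\le C\|f\|_{L^{p}}+C\bigl(1+\|\nabla\cdot f\|_{L^{\infty}}+\|\nabla\times f\|_{L^{\infty}}\bigr)\bigl(1+\log(e+\|f\|_{W^{s,p}})\bigr)$. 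You should state and prove that version rather than claim the two absorptions.
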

Kozono and Taniuchi improved the above logarithmic Sobolev inequality in $BMO$ space, and applied the result to the three-dimensional Euler equations to prove that $BMO$-norm of the vorticity controls breakdown of smooth solutions.  
\begin{lemma}\label{kte}
Let $1 < p < \infty$ and let $s > \frac{n}{p}$, then we have
\[\|f\|_{L^{\infty}} \leq C \left(1 +\| f\|_{BMO}(1 + \log^{+} \|f\|_{W^{s,p}})\right),\]
 for all $f \in W^{s, p},$
 where $\log^{+} a = \log a$ if $a\geq 1$ and zero otherwise.
\end{lemma}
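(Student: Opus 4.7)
The plan is to prove this logarithmic Sobolev inequality via a Littlewood-Paley decomposition combined with an optimization argument that balances a $BMO$-based bound on individual dyadic blocks against a subcritical Sobolev bound on the high-frequency tail. The underlying idea is that neither $BMO$ nor $W^{s,p}$ alone controls $L^\infty$, but splitting the spectrum at a well-chosen scale produces a bound that depends only weakly (logarithmically) on the $W^{s,p}$ norm.

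First I would fix a Littlewood-Paley decomposition $f = S_{0}f + \sum_{j \geq 0}\triangle_{j}f$ and write
\[\|f\|_{L^{\infty}} \leq \|S_{0}f\|_{L^{\infty}} + \sum_{0 \leq j \leq N}\|\triangle_{j}f\|_{L^{\infty}} + \sum_{j > N}\|\triangle_{j}f\|_{L^{\infty}},\]
with $N$ a nonnegative integer to be chosen. For the first two terms I would invoke the embedding $BMO \hookrightarrow B^{0}_{\infty,\infty}$, which yields $\sup_{j}\|\triangle_{j}f\|_{L^{\infty}} \leq C\|f\|_{BMO}$ with $C$ independent of $j$, and similarly for $S_{0}f$; these pieces together are therefore bounded by $C(N+1)\|f\|_{BMO}$.

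For the tail I would apply Bernstein's inequality $\|\triangle_{j}f\|_{L^{\infty}} \leq C\,2^{jn/p}\|\triangle_{j}f\|_{L^{p}}$ together with the Littlewood-Paley characterization $2^{js}\|\triangle_{j}f\|_{L^{p}} \leq C\|f\|_{W^{s,p}}$. Since $s > n/p$, the exponent $s - n/p$ is positive and the geometric series sums to give
\[\sum_{j > N}\|\triangle_{j}f\|_{L^{\infty}} \leq C\,2^{-N(s - n/p)}\|f\|_{W^{s,p}}.\]
Choosing $N$ as the smallest nonnegative integer for which this tail is at most $1$, namely $N \sim \frac{1}{s - n/p}\log(e + \|f\|_{W^{s,p}})$, and combining with the previous bound produces the stated inequality. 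The replacement of $\log$ by $\log^{+}$ arises naturally because when $\|f\|_{W^{s,p}} < 1$ one takes $N = 0$ so the middle block is empty.

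The main obstacle is the uniform-in-$j$ estimate $\|\triangle_{j}f\|_{L^{\infty}} \leq C\|f\|_{BMO}$: this is not an immediate consequence of the BMO definition, and requires exploiting the cancellation of the Littlewood-Paley convolution kernel (a smooth Calderon-Zygmund-type kernel with mean zero and rapid decay), together with the duality $(H^{1})^{*} = BMO$ and the atomic decomposition of $H^{1}$. A secondary but genuine technical point is the Littlewood-Paley characterization of the Bessel potential space $W^{s,p}$ for $p \neq 2$, which follows from the Mihlin-H\"ormander multiplier theorem applied to $J^{s} = (I - \Delta)^{s/2}$. Once both ingredients are in place, the optimization over $N$ is routine.
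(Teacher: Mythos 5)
The paper does not actually prove this lemma; it simply cites Theorem~1 of Kozono--Taniuchi \cite{KT}, and your high/low frequency splitting with a logarithmically optimized cutoff is essentially the strategy of that cited proof. However, your sketch contains one genuine gap: the claim that $\|S_{0}f\|_{L^{\infty}} \leq C\|f\|_{BMO}$ (``and similarly for $S_{0}f$''). The embedding $BMO \hookrightarrow \dot B^{0}_{\infty,\infty}$ gives $\sup_{j}\|\triangle_{j}f\|_{L^{\infty}} \leq C\|f\|_{BMO}$ precisely because each Littlewood--Paley kernel has vanishing integral, so that $\triangle_{j}f(x) = \int \varphi_{j}(x-y)\left(f(y)-f_{B}\right)dy$ and the mean oscillation enters; equivalently, the normalized kernel is a mean-zero $H^{1}$-type test function. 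The kernel of $S_{0}$ has integral $1$, no cancellation, and the duality argument you invoke does not apply to it. The estimate is in fact false: taking $f_{R}$ to be a smooth truncation of $\log|x|$ at scale $R$, one has $\|f_{R}\|_{BMO} \leq C$ uniformly in $R$ and $f_{R} \in W^{s,p}$ for each $R$, yet $\|S_{0}f_{R}\|_{L^{\infty}} \gtrsim \log R$. (This is the familiar fact that the $BMO$ seminorm is blind to constants and to slowly accumulating low-frequency content.)

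The repair is standard and is what Kozono--Taniuchi do: use the homogeneous decomposition and split at \emph{both} ends, writing $f = S_{-N}f + \sum_{-N \leq j \leq N}\triangle_{j}f + \sum_{j>N}\triangle_{j}f$. The middle range contributes $C(2N+1)\|f\|_{BMO}$ by the $\dot B^{0}_{\infty,\infty}$ bound, the high tail is handled exactly as you propose, and the very-low block is controlled not by $BMO$ but by H\"older/Young: $\|S_{-N}f\|_{L^{\infty}} \leq C\,2^{-Nn/p}\|f\|_{L^{p}} \leq C\,2^{-Nn/p}\|f\|_{W^{s,p}}$, which becomes $O(1)$ once $N \geq C_{0}\log(e+\|f\|_{W^{s,p}})$ with $C_{0}$ depending on $n,p,s$. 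This low-frequency term, together with the high tail, is the true source of the additive constant $1$ in the stated inequality; attributing the $\log^{+}$ solely to the case $\|f\|_{W^{s,p}}<1$ misses this. With that correction (and your two technical ingredients, the $H^{1}$--$BMO$ duality for the mean-zero blocks and the Littlewood--Paley characterization of $W^{s,p}$ for $1<p<\infty$), the argument closes.
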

\noindent For proof see Theorem 1 of \cite{KT}.
\begin{remark} 
Throughout the following sections, $C$ denotes a generic constant.
\end{remark}

\section{Energy estimates, Local Existence and Uniqueness of magnetic B\'enard problem.}\label{EM}

We consider the following truncated ideal magnetic B\'enard problem on the whole of $\mathbb R^n$, for $n$ = $2, 3$:
\begin{align}\label{tr1}
\frac{\partial \mathbf u^{\textit R}}{\partial t} + {\mathcal S}_{\textit R} \left[ (\mathbf u^{\textit R} \cdot \nabla)\mathbf u^{\textit R} \right] + \nabla p^{\textit R} = \theta^{\textit R} e_n + {\mathcal S}_{\textit R} \left[ (\mathbf b^{\textit R} \cdot \nabla)\mathbf b^{\textit R} \right] ,
\end{align}
\begin{align}\label{tr2}
\frac{\partial \mathbf \theta^{\textit R}}{\partial t} + {\mathcal S}_{\textit R} \left[ (\mathbf u^{\textit R} \cdot \nabla)\mathbf \theta^{\textit R} \right] = \mathbf u^{\textit R} \cdot e_n  ,
\end{align}
\begin{align}\label{tr3}
\frac{\partial \mathbf b^{\textit R}}{\partial t} + {\mathcal S}_{\textit R} \left[ (\mathbf u^{\textit R} \cdot \nabla)\mathbf b^{\textit R} \right] = {\mathcal S}_{\textit R} \left[ (\mathbf b^{\textit R} \cdot \nabla)\mathbf u^{\textit R} \right]
 \end{align}
\begin{align}
\nabla \cdot \mathbf u^{\textit R} = 0 = \nabla \cdot \mathbf b^{\textit R},
\end{align}
\begin{align}\label{tr4}
\mathbf u^{\textit R}(0) = {\mathcal S}_{\textit R} \mathbf u_{0}, \mathbf \theta^{\textit R}(0) = {\mathcal S}_{\textit R} \mathbf \theta_{0}, \mathbf b^{\textit R}(0) = {\mathcal S}_{\textit R} \mathbf b_{0}.
\end{align}

As the truncations are invariant under the flow of the equation, by taking the truncated initial data we ensure that $\mathbf u^{\textit R}$, $\mathbf b^{\textit R}$ lie in the space 
\begin{align*}
V_R^{\sigma} := \left\{ g \in L^2(\mathbb R^n) : supp(\widehat g) \subset B_R, \nabla \cdot g = 0 \right\}
\end{align*}
and $\mathbf \theta^{\textit R}$ lie in the space
\[  V_R := \left\{ g \in L^2(\mathbb R^n) : supp(\widehat g) \subset B_R \right\} .\]  

The divergence free condition for $\mathbf u^{\textit R}$ can be obtained easily as
\[ \widehat {\nabla \cdot \mathbf u^{\textit R}}(\xi) = i\xi \cdot \mathbf 1_{B_R}(\xi) \widehat {\mathbf u} (\xi) = \mathbf 1_{B_R}(\xi) i\xi \cdot  \widehat {\mathbf u} (\xi) = \mathbf 1_{B_R}(\xi) \widehat {\nabla \cdot \mathbf u} (\xi)  = 0.  \]

Similarly we obtain divergence free condition for $\mathbf b^{\textit R}.$

\begin{proposition}\label{cut}
Let $(\mathbf u^{\textit R}$, $\mathbf b^{\textit R})$ $\in$ $\H^s(\mathbb R^n) \times \H^s(\mathbb R^n)$, for $s > n/2 +1$. Then the nonlinear operator $F(\mathbf u^{\textit R}, \mathbf b^{\textit R}) := {\mathcal S}_{\textit R} \left[ (\mathbf u^{\textit R} \cdot \nabla)\mathbf b^{\textit R} \right]$ is locally Lipschitz in $\mathbf u^{\textit R}$ and $\mathbf b^{\textit R}$ on the space $V_R^{\sigma}$.
\end{proposition}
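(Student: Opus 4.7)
The plan is to exploit the fact that on $V_R^{\sigma}$ the Fourier support is confined to the ball $B_R$, so by Bernstein-type inequalities all $L^p$ and Sobolev norms are equivalent to the $L^2$-norm, with constants depending only on $R$. Consequently $V_R^\sigma$ is a Banach space under $\|\cdot\|_{L^2}$, and establishing the local Lipschitz property of $F$ in the $L^2$-norm is all that is required to feed a Picard--Lindel\"of argument for ODEs in $V_R^\sigma$---which is the intended downstream use of the proposition.

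First I would exploit the bilinearity of $F$ to write
\[
F(\mathbf u_1, \mathbf b_1) - F(\mathbf u_2, \mathbf b_2)
= {\mathcal S}_{\textit R}\bigl[((\mathbf u_1 - \mathbf u_2) \cdot \nabla) \mathbf b_1\bigr]
+ {\mathcal S}_{\textit R}\bigl[(\mathbf u_2 \cdot \nabla)(\mathbf b_1 - \mathbf b_2)\bigr].
\]
Using property (1) of ${\mathcal S}_{\textit R}$ with $s=0$ (i.e.\ $L^2$-boundedness of the truncation) together with H\"{o}lder's inequality, each summand is controlled in $L^2$ by a product of the form $\|\mathbf u\|_{L^\infty}\|\nabla \mathbf b\|_{L^2}$. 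Invoking Bernstein's inequality on $V_R^\sigma$, namely
\[
\|g\|_{L^\infty} \leq C R^{n/2} \|g\|_{L^2},
\qquad
\|\nabla g\|_{L^2} \leq C R \|g\|_{L^2}
\quad\text{for all } g \text{ with } \operatorname{supp}\widehat g \subset B_R,
\]
converts every factor into an $L^2$-norm and yields
\[
\|F(\mathbf u_1, \mathbf b_1) - F(\mathbf u_2, \mathbf b_2)\|_{L^2}
\leq C(R)\bigl(\|\mathbf u_1 - \mathbf u_2\|_{L^2}\|\mathbf b_1\|_{L^2}
+ \|\mathbf u_2\|_{L^2}\|\mathbf b_1 - \mathbf b_2\|_{L^2}\bigr),
\]
which is precisely a local Lipschitz estimate on bounded subsets of $V_R^\sigma \times V_R^\sigma$.

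There is no genuine obstacle here: the proposition is a soft consequence of the bilinear structure of $F$ combined with Bernstein's inequality. The only point worth flagging is that the Lipschitz constant blows up like $R^{n/2+1}$ as $R \to \infty$, so this estimate alone cannot survive the passage to the limit in $R$; the $R$-uniform control needed for local existence of the untruncated system must come separately, from the $H^s$-energy estimates to be derived in the remainder of Section \ref{EM}.
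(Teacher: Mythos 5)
Your proof is correct, but it takes a genuinely different route from the paper's. The paper never invokes the band-limitation of $V_R^{\sigma}$ beyond the uniform $L^2$-boundedness of $\mathcal{S}_R$: it establishes Lipschitz continuity \emph{separately} in each argument, testing the difference of the nonlinearities against itself in $L^2$, integrating by parts (for the $\mathbf{u}$-variable) to move the derivative onto the difference, and then closing with the Sobolev embedding $H^s \hookrightarrow L^{\infty}$ and the product estimates of Remark \ref{r23}; the resulting Lipschitz constants are $\|\mathbf{b}^R\|_{H^s}$ and $\|\mathbf{u}^R\|_{H^s}$ respectively, with no explicit $R$-dependence. You instead decompose the full bilinear difference jointly, apply H\"older, and use Bernstein's inequalities on the frequency ball $B_R$ to reduce everything to the $L^2$-norm, at the price of a constant of order $R^{n/2+1}$. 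Both versions are adequate inputs for the Picard iteration, since on $V_R^{\sigma}$ the $L^2$- and $H^s$-topologies coincide; yours is arguably the more natural formulation because the ODE is posed in the $L^2$-based space $V_R^{\sigma}$, and it sidesteps a slightly loose step in the paper where an $L^2$-operator bound is read off from a single inner product against $\mathbf{u}_1^R - \mathbf{u}_2^R$. Your closing caveat is exactly right and worth keeping: the $R$-degenerate Lipschitz constant is irrelevant to the limit $R \to \infty$, because the uniform-in-$R$ control is supplied independently by the $H^s$-energy estimates of Proposition \ref{fin}. The one point to state explicitly is that the Bernstein step $\|\nabla \mathbf{b}_1\|_{L^2} \leq C R \|\mathbf{b}_1\|_{L^2}$ uses that $\mathbf{b}_1$ itself lies in $V_R^{\sigma}$, not merely in $H^s$; since the proposition restricts to $V_R^{\sigma}$ this is harmless, but it is the place where your argument, unlike the paper's, genuinely needs the frequency truncation of the data.
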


\begin{proof} Let  $\mathbf b^{\textit R}$ $\in$ $\H^s(\mathbb R^n)$, for $s > n/2 + 1$. Then for proving $F(\cdot, \cdot)$ to be locally Lipschitz in $\mathbf u^{\textit R}$, we use integration by parts, H\"{o}lder's inequality and Lemma \ref{Si} to get,
\begin{align*}
&\left| \left(F(\mathbf u_1^{\textit R}, \mathbf b^{\textit R}) - F(\mathbf u_2^{\textit R}, \mathbf b^{\textit R}), \mathbf u_1^{\textit R} - \mathbf u_2^{\textit R} \right)_{L^2} \right| 
\\ & \quad \quad  \quad \quad \quad=  \left| \left({\mathcal S}_{\textit R} \left[ (\mathbf u_1^{\textit R} - \mathbf u_2^{\textit R})  \cdot \nabla \mathbf b^{\textit R} \right], \mathbf u_1^{\textit R} - \mathbf u_2^{\textit R} \right)_{L^2} \right|
\\ &\quad \quad  \quad \quad \quad= \left| \left( \left( (\mathbf u_1^{\textit R} - \mathbf u_2^{\textit R})  \cdot \nabla  \right)\mathbf b^{\textit R}, {\mathcal S}_{\textit R}( \mathbf u_1^{\textit R} - \mathbf u_2^{\textit R}) \right)_{L^2} \right|
\\ &\quad \quad  \quad \quad \quad= \left| -\left( \left( (\mathbf u_1^{\textit R} - \mathbf u_2^{\textit R}) \cdot \nabla \right)( \mathbf u_1^{\textit R} - \mathbf u_2^{\textit R}), {\mathcal S}_{\textit R}\mathbf b^{\textit R} \right)_{L^2} \right|
\\ &\quad \quad  \quad \quad \quad\leq \|\mathbf u_1^{\textit R} - \mathbf u_2^{\textit R}\|_{\L^2} \| \nabla(\mathbf u_1^{\textit R} - \mathbf u_2^{\textit R})\|_{\L^2} \|{\mathcal S}_{\textit R}\mathbf b^{\textit R} \|_{L^{\infty}}
\\ &\quad \quad  \quad \quad \quad\leq \|\mathbf u_1^{\textit R} - \mathbf u_2^{\textit R}\|_{\L^2} \| \mathbf u_1^{\textit R} - \mathbf u_2^{\textit R} \|_{\H^1} \|\mathbf b^{\textit R} \|_{L^{\infty}}
\\ &\quad \quad  \quad \quad \quad\leq C \|\mathbf u_1^{\textit R} - \mathbf u_2^{\textit R}\|_{\H^s} \|\mathbf b^{\textit R} \|_{\H^s} \|\mathbf u_1^{\textit R} - \mathbf u_2^{\textit R}\|_{\L^2}.
\end{align*}

This gives for $\mathbf b^{\textit R}$ $\in$ $\H^s(\mathbb R^n)$,
\begin{align*}
\| F(\mathbf u_1^{\textit R}, \mathbf b^{\textit R}) - F(\mathbf u_2^{\textit R}, \mathbf b^{\textit R})\|_{L^2} \leq C \| \mathbf b^{\textit R} \|_{\H^s} \|\mathbf u_1^{\textit R} - \mathbf u_2^{\textit R}\|_{\H^s}
\end{align*}
And hence $F(\cdot, \cdot)$ is locally Lipschitz in $\mathbf u^{\textit R}$. To prove $F$ to be locally Lipschitz in $\mathbf b^{\textit R}$, we use Remark \ref{div}. For $s > n/2 +1$ and $\mathbf u^{\textit R} \in \H^s(\mathbb R^{n}),$ we have
\begin{align*}
&\left| \left(F(\mathbf u^{\textit R}, \mathbf b_1^{\textit R}) - F(\mathbf u^{\textit R}, \mathbf b_2^{\textit R}), \mathbf b_1^{\textit R} - \mathbf b_2^{\textit R} \right)_{L^2} \right| 
\\ &\quad \quad  \quad \quad \quad= \left| \left({\mathcal S}_{\textit R} (\mathbf u^{\textit R} \cdot \nabla) (\mathbf b_1^{\textit R}-\mathbf b_2^{\textit R}), \mathbf b_1^{\textit R} - \mathbf b_2^{\textit R} \right)_{L^2} \right|
\\ &\quad \quad  \quad \quad \quad= \left| \left( (\mathbf u^{\textit R} \cdot \nabla) (\mathbf b_1^{\textit R}-\mathbf b_2^{\textit R}), {\mathcal S}_{\textit R}(\mathbf b_1^{\textit R} - \mathbf b_2^{\textit R}) \right)_{L^2} \right|
\\ &\quad \quad  \quad \quad \quad\leq \| (\mathbf u^{\textit R} \cdot \nabla) (\mathbf b_1^{\textit R}-\mathbf b_2^{\textit R}) \|_{\L^2} \|{\mathcal S}_{\textit R}(\mathbf b_1^{\textit R} - \mathbf b_2^{\textit R}) \|_{\L^2}
\\ &\quad \quad  \quad \quad \quad\leq C \| \mathbf u^{\textit R}\|_{\H^1} \| \nabla (\mathbf b_1^{\textit R}-\mathbf b_2^{\textit R}) \|_{\H^{s-1}} \| \mathbf b_1^{\textit R} - \mathbf b_2^{\textit R} \|_{\L^2}
\\ &\quad \quad  \quad \quad \quad\leq C \| \mathbf u^{\textit R}\|_{\H^s} \| \mathbf b_1^{\textit R}-\mathbf b_2^{\textit R} \|_{\H^{s}} \| \mathbf b_1^{\textit R} - \mathbf b_2^{\textit R} \|_{\L^2}
\end{align*}

Hence for $\mathbf u^{\textit R}$ $\in$ $\H^s(\mathbb R^n)$, we have
\begin{align*}
\| \left(F(\mathbf u^{\textit R}, \mathbf b_1^{\textit R}) - F(\mathbf u^{\textit R}, \mathbf b_2^{\textit R}\right) \|_{L^2} \leq C \| \mathbf u^{\textit R}\|_{\H^s} \| \mathbf b_1^{\textit R}-\mathbf b_2^{\textit R} \|_{\H^{s}}
\end{align*}
And hence $F(\cdot, \cdot)$ is locally Lipschitz in $\mathbf b^{\textit R}$.
\end{proof}

Similarly one can show $F(\mathbf b^{\textit R}, \mathbf u^{\textit R})$ is locally Lipschitz in $\mathbf b^{\textit R}$ and $\mathbf u^{\textit R}$ on the space $V_R^{\sigma}\times V_R^{\sigma}$ and $F(\mathbf u^{\textit R}, \mathbf \theta^{\textit R})$ is locally Lipschitz in $\mathbf u^{\textit R}$ and $\mathbf \theta^{\textit R}$ on the space $V_R^{\sigma}\times V_R$. 

Hence by Picard's theorem for infinite dimensional ordinary differential equations, there exist a solution $(\mathbf u^{\textit R}, \mathbf \theta^{\textit R}, \mathbf b^{\textit R})$ in $V_R^{\sigma} \times V^R \times V_R^{\sigma}$ for some interval $[0, T],$ where $T$ depends on $R$. Moreover, the solution will exist as long as $\| \mathbf u^{\textit R}\|_{\H^s}$ , $ \| \mathbf \theta^{\textit R}\|_{H^s}$ and $\| \mathbf b^{\textit R}\|_{\H^s}$ remain finite.

\subsection{Energy Estimates.}\label{ee}

In this section we will obtain $L^2$ and $H^s, s>n/2+1,$ energy estimates for $\mathbf u^{\textit R}, \mathbf \theta^{\textit R}$ and $\mathbf b^{\textit R}$. In the course of proving the $\| \mathbf u^{\textit R}\|_{\H^s}$ , $ \| \mathbf \theta^{\textit R}\|_{H^s}$  and $\| \mathbf b^{\textit R}\|_{\H^s}$ are uniformly bounded, we will pick up a blow-up time $T^{\ast}.$

\begin{proposition}\label{lfin}
$($$L^2$-Energy Estimate$)$ Given $(\mathbf u_{0}, \mathbf \theta_{0}, \mathbf b_{0})$ $\in$ $\L^2(\mathbb R^n) \times L^2(\mathbb R^n) \times \L^2(\mathbb R^n)$ with $s > n/2 + 1$, then for any $t \in [0, T]$, where $0 < T < \infty,$ we have
\[\sup_{t\in [0, T]} \left(\| \mathbf u^{\textit R}(t)\|^2_{\L^2}+\| \mathbf \theta^{\textit R}(t)\|^2_{L^2}+\| \mathbf b^{\textit R}(t)\|^2_{\L^2}\right) < C\]
where C depends only on $\|\mathbf u_0\|_{\L^2}, \| \mathbf \theta_0\|_{L^2}, \| \mathbf b_0\|_{\L^2}$ and $T$.
\end{proposition}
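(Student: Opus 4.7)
The plan is to test the three evolution equations against $\mathbf u^R$, $\theta^R$, $\mathbf b^R$ respectively in $L^2$, add them, and exploit the three structural cancellations the system offers: (i) pressure vanishes against a divergence-free field, (ii) the transport terms ${\mathcal S}_R[(\mathbf u^R\cdot\nabla)\,\cdot\,]$ vanish after an integration by parts, and (iii) the Lorentz/induction coupling cancels due to $\nabla\cdot\mathbf b^R=0$. The only surviving terms are the buoyancy $(\theta^R e_n,\mathbf u^R)_{L^2}$ and the Rayleigh--B\'enard term $(\mathbf u^R\cdot e_n,\theta^R)_{L^2}$, which are two copies of the same scalar $\int u^R_n\theta^R\,dx$; these are handled by Young's inequality, and Gronwall closes the estimate.

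The key observation that unlocks the cancellations is that ${\mathcal S}_R$ is an orthogonal projection on $L^2$ (it is multiplication by $\mathbf 1_{B_R}$ on the Fourier side), hence self-adjoint, and it acts as the identity on elements of $V_R^\sigma$ and $V_R$. Thus, for instance,
\begin{equation*}
\bigl({\mathcal S}_R[(\mathbf u^R\cdot\nabla)\mathbf u^R],\mathbf u^R\bigr)_{L^2}
=\bigl((\mathbf u^R\cdot\nabla)\mathbf u^R,{\mathcal S}_R\mathbf u^R\bigr)_{L^2}
=\bigl((\mathbf u^R\cdot\nabla)\mathbf u^R,\mathbf u^R\bigr)_{L^2}=0,
\end{equation*}
after the standard integration by parts using $\nabla\cdot\mathbf u^R=0$; the same trick kills $({\mathcal S}_R[(\mathbf u^R\cdot\nabla)\theta^R],\theta^R)_{L^2}$ and $({\mathcal S}_R[(\mathbf u^R\cdot\nabla)\mathbf b^R],\mathbf b^R)_{L^2}$. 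For the Lorentz--induction coupling, the same self-adjointness gives
\begin{equation*}
\bigl({\mathcal S}_R[(\mathbf b^R\cdot\nabla)\mathbf b^R],\mathbf u^R\bigr)_{L^2}
+\bigl({\mathcal S}_R[(\mathbf b^R\cdot\nabla)\mathbf u^R],\mathbf b^R\bigr)_{L^2}
=\bigl((\mathbf b^R\cdot\nabla)\mathbf b^R,\mathbf u^R\bigr)_{L^2}
+\bigl((\mathbf b^R\cdot\nabla)\mathbf u^R,\mathbf b^R\bigr)_{L^2}=0,
\end{equation*}
since integrating by parts against $\nabla\cdot\mathbf b^R=0$ turns one integral into minus the other.

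Adding the three tested equations and using the above, I obtain
\begin{equation*}
\tfrac{1}{2}\tfrac{d}{dt}\bigl(\|\mathbf u^R\|_{L^2}^2+\|\theta^R\|_{L^2}^2+\|\mathbf b^R\|_{L^2}^2\bigr)
=(\theta^R e_n,\mathbf u^R)_{L^2}+(\mathbf u^R\cdot e_n,\theta^R)_{L^2}
=2\!\int u_n^R\theta^R\,dx.
\end{equation*}
Young's inequality bounds the right-hand side by $\|\mathbf u^R\|_{L^2}^2+\|\theta^R\|_{L^2}^2$, so the total energy $E^R(t):=\|\mathbf u^R(t)\|_{L^2}^2+\|\theta^R(t)\|_{L^2}^2+\|\mathbf b^R(t)\|_{L^2}^2$ satisfies $\tfrac{d}{dt}E^R(t)\le 2E^R(t)$. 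Gronwall gives $E^R(t)\le E^R(0)e^{2t}$, and property (1) of the Fourier truncation bounds $E^R(0)\le C(\|\mathbf u_0\|_{L^2}^2+\|\theta_0\|_{L^2}^2+\|\mathbf b_0\|_{L^2}^2)$ uniformly in $R$, yielding the stated bound over $[0,T]$.

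The only genuine subtlety is the double bookkeeping on the Lorentz/induction pair: one must be careful that the truncation appears on the \emph{nonlinearity}, not on the solution being tested, so self-adjointness of ${\mathcal S}_R$ together with the invariance of $V_R^\sigma$ under the flow is what allows the usual MHD cancellation to go through verbatim. Everything else is routine energy bookkeeping, and the symmetry between the buoyancy and Rayleigh--B\'enard terms ensures no $\theta^R$-dependent source survives to disturb linear growth.
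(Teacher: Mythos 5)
Your proposal is correct and follows essentially the same route as the paper: test each equation against its own unknown in $L^2$, use the self-adjointness of ${\mathcal S}_R$ and its identity action on $V_R^{\sigma}$, $V_R$ to kill the transport terms and cancel the Lorentz/induction pair, bound the two surviving $\int u_n^R\theta^R\,dx$ terms by Young's inequality, and close with Gronwall. The only cosmetic difference is that you spell out the projection argument for moving ${\mathcal S}_R$ off the nonlinearity, which the paper leaves implicit.
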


\begin{proof}
Consider the equations \eqref{tr1}-\eqref{tr3}. Taking $L^2-$ inner product of \eqref{tr1}, \eqref{tr2} and \eqref{tr3} with $\mathbf u^{\textit R}, \theta^{\textit R}$ and $\mathbf b^{\textit R}$ respectively, and adding we obtain
\begin{align}\label{ddtut}
\frac{1}{2} \frac{d}{dt} \left( \|\mathbf u^{\textit R}\|^2_{\L^2} + \|\mathbf \theta^{\textit R}\|^2_{L^2} + \|\mathbf b^{\textit R}\|^2_{\L^2}\right) = \left(\theta^{\textit R} e_n, \mathbf u^{\textit R}\right)_{L^2} + \left((\mathbf u^{\textit R} \cdot e_n), \theta^{\textit R}\right)_{L^2}.
\end{align}
In the above calculation, we have used the fact that $\left( (\mathbf u^{\textit R} \cdot \nabla)\mathbf u^{\textit R}, \mathbf u^{\textit R}\right)_{L^2},$ 

\noindent $\left( (\mathbf u^{\textit R} \cdot \nabla)\mathbf \theta^{\textit R}, \mathbf \theta^{\textit R} \right)_{L^2}$ and $\left((\mathbf u^{\textit R} \cdot \nabla)\mathbf b^{\textit R}, \mathbf b^{\textit R}\right)_{L^2}$ vanish and $\left((\mathbf b^{\textit R} \cdot \nabla)\mathbf b^{\textit R}, \mathbf u^{\textit R}\right)_{L^2} = - \left((\mathbf b^{\textit R} \cdot \nabla)\mathbf u^{\textit R}, \mathbf b^{\textit R}\right)_{L^2}$.
It is easy to see that,
\begin{align*}
|\left(\theta^{\textit R} e_n, \mathbf u^{\textit R}\right)_{L^2}| \leq \|\theta^{\textit R} e_n\|_{L^2} \|\mathbf u^{\textit R}\|_{L^2} &\leq \|\theta^{\textit R} \|_{L^2} \|\mathbf u^{\textit R}\|_{L^2} 
\\ &\leq \frac{1}{2} \left( \|\mathbf u^{\textit R}\|^2_{L^2} + \|\theta^{\textit R} \|^2_{L^2} + \|\mathbf b^{\textit R}\|^2_{L^2}\right),
\end{align*}
and
\begin{align*}
|\left((\mathbf u^{\textit R} \cdot e_n), \theta^{\textit R}\right)_{L^2}| \leq \|\mathbf u^{\textit R}\|_{L^2}  \|\theta^{\textit R} \|_{L^2} \leq \frac{1}{2} \left( \|\mathbf u^{\textit R}\|^2_{L^2} + \|\theta^{\textit R} \|^2_{L^2} + \|\mathbf b^{\textit R}\|^2_{L^2}\right).
\end{align*}
Using the above estimates in \eqref{ddtut} and letting $Y(t) = \|\mathbf u^{\textit R}(t)\|^2_{\L^2} + \|\mathbf \theta^{\textit R}(t)\|^2_{L^2} + \|\mathbf b^{\textit R}(t)\|^2_{\L^2}$, we obtain
\begin{align*}
\frac{dY(t)}{dt} \leq 2Y(t).
\end{align*}
Straightforward integration and the fact that $\|\u^{\textit R}(0)\|_{\L^2}  \leq \|u_0\|_{\L^2}$,

\noindent $\|\theta^{\textit R}(0)\|_{L^2}  \leq \|\theta_0\|_{L^2}$ and $\|\b^{\textit R}(0)\|_{\L^2}  \leq \|\b_0\|_{\L^2}$ yield

\[\sup_{t\in [0, T]}Y(t) \leq C(\|\mathbf u_0\|_{\L^2}, \| \mathbf \theta_0\|_{L^2}, \| \mathbf b_0\|_{\L^2}, T)\]
So we have the desired result.
\end{proof}

\begin{proposition}\label{fin}
Let $(\mathbf u_{0}, \mathbf \theta_{0}, \mathbf b_{0})$ $\in$ $\H^s(\mathbb R^n) \times H^s(\mathbb R^n) \times \H^s(\mathbb R^n)$ with $s > n/2 + 1$. Then there exists a time $T^{\ast} = T^{\ast}(s, \|u_0\|_{\H^s}, \|\theta_0\|_{H^s}, \|\b_0\|_{\H^s}) > 0$ such that the following norms
\[ \sup_{t \in [0, T^{\ast}]} \| \mathbf u^{\textit R}(t)\|_{\H^s}, \quad  \sup_{t \in [0, T^{\ast}]}\| \mathbf \theta^{\textit R}(t)\|_{H^s}, \quad \sup_{t \in [0, T^{\ast}]} \| \mathbf b^{\textit R}(t)\|_{\H^s}\]
are bounded uniformly in $R$.
\end{proposition}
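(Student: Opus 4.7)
The plan is to derive an $H^s$ differential inequality of the form $\frac{d}{dt}Y(t)\leq C Y(t)+C Y(t)^{3/2}$ for the quantity $Y(t):=\|\mathbf u^R(t)\|_{\H^s}^2+\|\theta^R(t)\|_{H^s}^2+\|\mathbf b^R(t)\|_{\H^s}^2$, and then invoke a comparison argument to obtain a time interval $[0,T^{\ast}]$ on which $Y$ stays finite, with $T^{\ast}$ depending only on the initial $H^s$ data and $s$. Because $\|\mathcal S_R f\|_{H^s}\leq C\|f\|_{H^s}$ with $C$ independent of $R$ (property (1) of $\mathcal S_R$), the initial datum $Y(0)$ is bounded independently of $R$, and consequently so is $T^{\ast}$.

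Concretely, I would apply $J^s$ to \eqref{tr1}, \eqref{tr2}, \eqref{tr3} and take the $L^2$ inner product with $J^s\mathbf u^R$, $J^s\theta^R$, $J^s\mathbf b^R$ respectively. Since $\mathbf u^R,\theta^R,\mathbf b^R$ have Fourier support in $B_R$, the operator $\mathcal S_R$ can be moved onto the test function without penalty, so these become $(J^s[(\mathbf u^R\!\cdot\!\nabla)\mathbf u^R],J^s\mathbf u^R)_{L^2}$ etc. The standard trick is to write each of these as a commutator plus a transport term:
\begin{align*}
\bigl(J^s[(\mathbf u^R\!\cdot\!\nabla)\mathbf u^R],J^s\mathbf u^R\bigr)_{L^2}
=\bigl([J^s,\mathbf u^R]\nabla\mathbf u^R,J^s\mathbf u^R\bigr)_{L^2}
+\bigl((\mathbf u^R\!\cdot\!\nabla)J^s\mathbf u^R,J^s\mathbf u^R\bigr)_{L^2},
\end{align*}
where the transport term vanishes by $\nabla\!\cdot\!\mathbf u^R=0$. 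The analogous expansion of the $\theta$-transport term also has vanishing transport piece, and the two magnetic cross terms $\bigl(J^s[(\mathbf b^R\!\cdot\!\nabla)\mathbf b^R],J^s\mathbf u^R\bigr)_{L^2}+\bigl(J^s[(\mathbf b^R\!\cdot\!\nabla)\mathbf u^R],J^s\mathbf b^R\bigr)_{L^2}$ cancel at leading order after integration by parts using $\nabla\!\cdot\!\mathbf b^R=0$, leaving only commutator remainders.

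To control the commutators I would apply the Kato--Ponce inequality (Lemma \ref{kpe}) with $p=2$, e.g.
\[
\|[J^s,\mathbf u^R]\nabla\mathbf b^R\|_{L^2}\leq C\bigl(\|\nabla\mathbf u^R\|_{L^\infty}\|J^{s-1}\nabla\mathbf b^R\|_{L^2}+\|J^s\mathbf u^R\|_{L^2}\|\nabla\mathbf b^R\|_{L^\infty}\bigr),
\]
and similarly for the $\mathbf u\cdot\nabla\mathbf u$, $\mathbf u\cdot\nabla\theta$, $\mathbf b\cdot\nabla\mathbf u$ pieces. Since $s>\tfrac{n}{2}+1$, Remark \ref{prodhs} yields $\|\nabla\mathbf u^R\|_{L^\infty}\leq C\|\mathbf u^R\|_{\H^s}$ and likewise for $\mathbf b^R$ and $\theta^R$, so every commutator contribution is bounded by $C Y(t)^{3/2}$. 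The linear source terms $(J^s(\theta^R e_n),J^s\mathbf u^R)_{L^2}$ and $(J^s(\mathbf u^R\!\cdot\!e_n),J^s\theta^R)_{L^2}$ are handled by Cauchy--Schwarz and Young's inequality, producing a contribution bounded by $CY(t)$.

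Summing everything yields $\tfrac{d}{dt}Y(t)\leq CY(t)+CY(t)^{3/2}$. Comparing with the ODE $z'=Cz+Cz^{3/2}$ gives a blow-up time $T^{\ast}>0$ depending only on $Y(0)$, $s$, and $n$; on $[0,T^{\ast}]$ the solution satisfies $Y(t)\leq 2Y(0)$, say, uniformly in $R$. The main technical obstacle is the careful bookkeeping of the four nonlinear pairings to ensure that all $H^s$-level terms cancel and only commutators remain; once this is done, the Kato--Ponce estimate plus the $H^s\hookrightarrow W^{1,\infty}$ embedding is precisely calibrated to close the estimate at the exponent $s>\tfrac{n}{2}+1$.
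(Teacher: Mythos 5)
Your proposal is correct and follows essentially the same route as the paper: apply $J^s$, pair with $J^s\mathbf u^R$, $J^s\theta^R$, $J^s\mathbf b^R$, reduce the nonlinear pairings to commutators via the divergence-free transport cancellations, estimate them with Lemma \ref{kpe} and the embedding $H^{s-1}\hookrightarrow L^\infty$, and close with an ODE comparison (the paper uses Bihari's inequality on $\frac{d}{dt}X\leq CX^{3/2}$, which matches your $CY+CY^{3/2}$). If anything, your statement that the two magnetic cross terms cancel only at leading order, leaving commutator remainders to be estimated, is the more careful version of the paper's item (5), which asserts an exact antisymmetry that literally holds only for the transport parts.
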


\begin{proof} 
Let $J^s$ denote the fractional derivative operator as defined earlier.

Now for $s > n/2 +1$, apply $J^s$ to all the equations \eqref{tr1} - \eqref{tr3}:

\begin{align}\label{jtr1}
\frac{\partial (J^{s} \mathbf u^{\textit R})}{\partial t} + {\mathcal S}_{\textit R} J^s \left[ (\mathbf u^{\textit R} \cdot \nabla)\mathbf u^{\textit R} \right] + \nabla J^{s} p^{\textit R} = J^{s}(\theta^{\textit R} e_n) + {\mathcal S}_{\textit R} J^{s} \left[ (\mathbf b^{\textit R} \cdot \nabla)\mathbf b^{\textit R} \right] ,
\end{align}
\begin{align}\label{jtr2}
\frac{\partial (J^{s} \mathbf \theta^{\textit R})}{\partial t} + {\mathcal S}_{\textit R} J^{s} \left[ (\mathbf u^{\textit R} \cdot \nabla)\mathbf \theta^{\textit R} \right] = J^{s} (\mathbf u^{\textit R} \cdot e_n)  ,
\end{align}
\begin{align}\label{jtr3}
\frac{\partial (J^{s} \mathbf b^{\textit R})}{\partial t} + {\mathcal S}_{\textit R} J^{s} \left[ (\mathbf u^{\textit R} \cdot \nabla)\mathbf b^{\textit R} \right] = {\mathcal S}_{\textit R} J^{s} \left[ (\mathbf b^{\textit R} \cdot \nabla)\mathbf u^{\textit R} \right]
\end{align}

Taking ${L^2}$-inner product of \eqref{jtr1},  \eqref{jtr2} and \eqref{jtr3}  with $J^s \mathbf u^{\textit R}$,  $J^s \mathbf \theta^{\textit R}$ and $J^s \mathbf b^{\textit R}$ respectively, we obtain

\begin{align}\label{ijtr1}
\left(\frac{\partial (J^{s} \mathbf u^{\textit R})}{\partial t}, J^s \mathbf u^{\textit R}\right)_{L^2} &+ \left({\mathcal S}_{\textit R} J^s \left[ (\mathbf u^{\textit R} \cdot \nabla)\mathbf u^{\textit R} \right], J^s \mathbf u^{\textit R}\right)_{L^2} + \left(\nabla J^{s} p^{\textit R}, J^s \mathbf u^{\textit R}\right)_{L^2} \nonumber
\\ &= \left(J^{s}(\theta^{\textit R} e_n), J^s \mathbf u^{\textit R}\right)_{L^2} + \left({\mathcal S}_{\textit R} J^{s} \left[ (\mathbf b^{\textit R} \cdot \nabla)\mathbf b^{\textit R} \right], J^s \mathbf u^{\textit R}\right)_{L^2} ,
\end{align}
\begin{align}\label{ijtr2}
\left(\frac{\partial (J^{s} \mathbf \theta^{\textit R})}{\partial t}, J^s \mathbf \theta^{\textit R}\right)_{L^2} + \left({\mathcal S}_{\textit R} J^{s} \left[ (\mathbf u^{\textit R} \cdot \nabla)\mathbf \theta^{\textit R} \right], J^s \mathbf \theta^{\textit R}\right)_{L^2} = \left(J^{s} (\mathbf u^{\textit R} \cdot e_n), J^s \mathbf \theta^{\textit R}\right)_{L^2}  ,
\end{align}
\begin{align}\label{ijtr3}
\left(\frac{\partial (J^{s} \mathbf b^{\textit R})}{\partial t}, J^s \mathbf b^{\textit R}\right)_{L^2} &+ \left({\mathcal S}_{\textit R} J^{s} \left[ (\mathbf u^{\textit R} \cdot \nabla)\mathbf b^{\textit R} \right], J^s \mathbf b^{\textit R}\right)_{L^2} \nonumber
\\ &= \left({\mathcal S}_{\textit R} J^{s} \left[ (\mathbf b^{\textit R} \cdot \nabla)\mathbf u^{\textit R} \right], J^s \mathbf b^{\textit R}\right)_{L^2}.
\end{align}

\noindent We estimate each term of \eqref{ijtr1}, \eqref{ijtr2} and \eqref{ijtr3} separately,

\begin{enumerate}
\item
$\left(\frac{\partial (J^{s} \mathbf u^{\textit R})}{\partial t}, J^s \mathbf u^{\textit R}\right)_{L^2}$
\[= \int_{B_R} \frac{\partial J^s \mathbf u^{\textit R}}{\partial t} J^s \mathbf u^{\textit R} \,dx = \frac{1}{2} \int_{B_R} \frac{\partial \left| J^s \mathbf u^{\textit R} \right|^2}{\partial t} = \frac{1}{2} \frac{d}{dt} \|J^s \mathbf u^{\textit R}\|^2_{\L^2} =  \frac{1}{2} \frac{d}{dt} \|\mathbf u^{\textit R}\|^2_{\H^s}.\]
\item
Applying weak Parseval's identity and using the fact that ${\mathcal S}_{\textit R} \mathbf u^{\textit R} = \mathbf u^{\textit R}$, since $\mathbf u^{\textit R} \in V^{\sigma}_R$ we get,
\[ \left({\mathcal S}_{\textit R} J^s \left[ (\mathbf u^{\textit R} \cdot \nabla)\mathbf u^{\textit R} \right], J^s \mathbf u^{\textit R}\right)_{L^2} = \left( J^s \left[ (\mathbf u^{\textit R} \cdot \nabla)\mathbf u^{\textit R} \right], J^s \mathbf u^{\textit R}\right)_{L^2}. \]

Using definition of commutator and incompressibility of $\mathbf u^{\textit R}$, we obtain
\begin{align*}
 \left( \left[J^s, \mathbf u^{\textit R}\right] \nabla \mathbf u^{\textit R}, J^s \mathbf u^{\textit R}\right)_{L^2} &= \left( J^s \left[ (\mathbf u^{\textit R} \cdot \nabla)\mathbf u^{\textit R} \right] - \left(\mathbf u^{\textit R} \cdot \nabla\right) J^s\mathbf u^{\textit R}, J^s \mathbf u^{\textit R}\right)_{L^2}
\\ &= \left( J^s \left[ (\mathbf u^{\textit R} \cdot \nabla)\mathbf u^{\textit R} \right], J^s \mathbf u^{\textit R}\right)_{L^2}.
\end{align*}

Now using Lemma \ref{kpe} and H\"older's inequality we obtain,
\begin{align*}
&\left| \left( \left[J^s, \mathbf u^{\textit R}\right] \nabla \mathbf u^{\textit R}, J^s \mathbf u^{\textit R}\right)_{L^2}\right| \leq \|\left[J^s, \mathbf u^{\textit R}\right] \nabla \mathbf u^{\textit R}\|_{L^2} \| J^s \mathbf u^{\textit R}\|_{L^2}
\\ &\quad\quad\quad\quad\quad\quad\leq C \left( \| \nabla \mathbf u^{\textit R}\|_{L^{\infty}} \| J^{s-1}\nabla \mathbf u^{\textit R}\|_{\L^{2}} + \| J^{s}\mathbf u^{\textit R}\|_{\L^{2}} \|\nabla \mathbf u^{\textit R}\|_{L^{\infty}}\right) \| \mathbf u^{\textit R}\|_{\H^{s}}
\\ &\quad\quad\quad\quad\quad\quad\leq C \left( \| \nabla \mathbf u^{\textit R}\|_{\H^{s-1}} \| \nabla \mathbf u^{\textit R}\|_{\H^{s-1}} + \| \mathbf u^{\textit R}\|_{\H^{s}} \|\nabla \mathbf u^{\textit R}\|_{\H^{s-1}}\right) \| \mathbf u^{\textit R}\|_{\H^{s}}
\\ &\quad\quad\quad\quad\quad\quad\leq C \left( \| \mathbf u^{\textit R}\|^2_{\H^{s}} + \| \mathbf u^{\textit R}\|^2_{\H^{s}} \right) \| \mathbf u^{\textit R}\|_{\H^{s}}
\\ &\quad\quad\quad\quad\quad\quad\leq C \left(  \| \mathbf u^{\textit R}\|^2_{\H^{s}} + \| \mathbf \theta^{\textit R}\|^2_{H^{s}} + \| \mathbf b^{\textit R}\|^2_{\H^{s}}\right) \| \mathbf u^{\textit R}\|_{\H^{s}}.
 \end{align*}

\item
Using integration by parts and then divergence free condition on $\mathbf u^{\textit R}$ yields
\[\left(\nabla J^{s} p^{\textit R}, J^s \mathbf u^{\textit R}\right)_{L^2} = \left( J^{s} p^{\textit R}, J^s \nabla \cdot \mathbf u^{\textit R}\right)_{L^2} = 0.\]

\item
Using H\"older's inequality and then Young's inequality to the term
\begin{align*}
\left|\left(J^{s}(\theta^{\textit R} e_n), J^s \mathbf u^{\textit R}\right)_{L^2}\right| &\leq \|J^{s}(\theta^{\textit R} e_n)\|_{L^2} \|J^s \mathbf u^{\textit R}\|_{\L^2}
\\ &\leq \|\theta^{\textit R} e_n\|_{H^s} \| \mathbf u^{\textit R}\|_{\H^s} 
\\ &\leq C \left( \| \mathbf u^{\textit R}\|^{2}_{\H^s} + \| \mathbf \theta^{\textit R}\|^{2}_{H^s} + \| \mathbf b^{\textit R}\|^{2}_{\H^s}\right).
 \end{align*}

\item Using property of the bilinear operator, we have
\begin{align}
&\left({\mathcal S}_{\textit R} J^{s} \left[ (\mathbf b^{\textit R} \cdot \nabla)\mathbf b^{\textit R} \right], J^s \mathbf u^{\textit R}\right)_{L^2} = \left( J^{s} \left[ (\mathbf b^{\textit R} \cdot \nabla)\mathbf b^{\textit R} \right], J^s {\mathcal S}_{\textit R} \mathbf u^{\textit R}\right)_{L^2} \nonumber
\\ &= \left( J^{s} \left[ (\mathbf b^{\textit R} \cdot \nabla)\mathbf b^{\textit R} \right], J^s \mathbf u^{\textit R}\right)_{L^2} = - \left( J^{s} \left[ (\mathbf b^{\textit R} \cdot \nabla)\mathbf u^{\textit R} \right], J^s \mathbf b^{\textit R}\right)_{L^2}.\nonumber
 \end{align}
 
\item 
Calculation similar to (1) gives
\[\left(\frac{\partial (J^{s} \mathbf \theta^{\textit R})}{\partial t}, J^s \mathbf \theta^{\textit R}\right)_{L^2} = \frac{1}{2} \frac{d}{dt} \|\mathbf \theta^{\textit R}\|^2_{H^s}.\]

\item
Following similar calculation as in (2) and using Lemma \ref{kpe} to the term

 $\left({\mathcal S}_{\textit R} J^s \left[ (\mathbf u^{\textit R} \cdot \nabla)\mathbf \theta^{\textit R} \right], J^s \mathbf \theta^{\textit R}\right)_{L^2}$ we get,
\begin{align*}
&\left| \left( \left[J^s, \mathbf u^{\textit R}\right] \nabla \mathbf \theta^{\textit R}, J^s \mathbf \theta^{\textit R}\right)_{L^2}\right| \leq \|\left[J^s, \mathbf u^{\textit R}\right] \nabla \mathbf \theta^{\textit R}\|_{L^2} \|J^s \mathbf \theta^{\textit R}\|_{L^2}
\\ &\quad\quad\quad\quad\quad\quad\leq C \left( \| \nabla \mathbf u^{\textit R}\|_{L^{\infty}} \| J^{s-1}\nabla \mathbf \theta^{\textit R}\|_{L^{2}} + \| J^{s}\mathbf u^{\textit R}\|_{\L^{2}} \|\nabla \mathbf \theta^{\textit R}\|_{L^{\infty}}\right) \| \mathbf \theta^{\textit R}\|_{H^{s}}
\\ &\quad\quad\quad\quad\quad\quad\leq C \left( \| \nabla \mathbf u^{\textit R}\|_{\H^{s-1}} \| \nabla \mathbf \theta^{\textit R}\|_{H^{s-1}} + \| \mathbf u^{\textit R}\|_{\H^{s}} \|\nabla \mathbf \theta^{\textit R}\|_{H^{s-1}}\right) \| \mathbf \theta^{\textit R}\|_{H^{s}}
\\ &\quad\quad\quad\quad\quad\quad\leq C \left(  \| \mathbf u^{\textit R}\|^2_{\H^{s}} + \| \mathbf \theta^{\textit R}\|^2_{H^{s}} + \| \mathbf b^{\textit R}\|^2_{\H^{s}}\right) \| \mathbf \theta^{\textit R}\|_{H^{s}}.
 \end{align*}

\item
 Application of H\"older's inequality and Young's inequality yield
 \begin{align*}
\left| \left(J^{s} (\mathbf u^{\textit R} \cdot e_n), J^{s} \mathbf \theta^{\textit R}\right)_{L^2} \right| &\leq \|J^{s} (\mathbf u^{\textit R} \cdot e_n)\|_{\L^2} \| J^{s} \mathbf \theta^{\textit R}\|_{L^2} \leq \|\mathbf u^{\textit R}\|_{\H^s} \| \mathbf \theta^{\textit R}\|_{H^s}
\\ &\leq C \left( \| \mathbf u^{\textit R}\|^{2}_{\H^s} + \| \mathbf \theta^{\textit R}\|^{2}_{H^s} + \| \mathbf b^{\textit R}\|^{2}_{\H^s}\right).
 \end{align*}

\item
Similarly, $\left(\frac{\partial (J^{s} \mathbf b^{\textit R})}{\partial t}, J^s \mathbf b^{\textit R}\right)_{L^2} = \frac{1}{2} \frac{d}{dt} \|\mathbf b^{\textit R}\|^2_{\H^s}.$

\item
Following similar steps as in (7), replacing $\mathbf \theta^{\textit R}$ by $\mathbf b^{\textit R}$ we obtain
\[\left|\left({\mathcal S}_{\textit R} J^{s} \left[ (\mathbf u^{\textit R} \cdot \nabla)\mathbf b^{\textit R} \right], J^s \mathbf b^{\textit R}\right)_{L^2} \right|\leq  C \left(  \| \mathbf u^{\textit R}\|^2_{\H^{s}} + \| \mathbf \theta^{\textit R}\|^2_{H^{s}} + \| \mathbf b^{\textit R}\|^2_{\H^{s}}\right) \| \mathbf b^{\textit R}\|_{\H^{s}}.\]

\item
Weak Parseval's identity gives,
\[\left({\mathcal S}_{\textit R} J^{s} \left[ (\mathbf b^{\textit R} \cdot \nabla)\mathbf u^{\textit R} \right], J^s \mathbf b^{\textit R}\right)_{L^2} = \left( J^{s} \left[ (\mathbf b^{\textit R} \cdot \nabla)\mathbf u^{\textit R} \right], J^s \mathbf b^{\textit R}\right)_{L^2}\]
\end{enumerate}

Now adding  \eqref{ijtr1}, \eqref{ijtr2} and \eqref{ijtr3} (using the estimates obtained through (1) to (11)) we have,
 \begin{align*}
 &\frac{1}{2} \frac{d}{dt} \left( \| \mathbf u^{\textit R}\|^2_{\H^{s}} + \| \mathbf \theta^{\textit R}\|^2_{H^{s}} + \| \mathbf b^{\textit R}\|^2_{\H^{s}}\right) 
\\ &\leq C \left( \| \mathbf u^{\textit R}\|^2_{\H^{s}} + \| \mathbf \theta^{\textit R}\|^2_{H^{s}} + \| \mathbf b^{\textit R}\|^2_{\H^{s}}\right) \left(\| \mathbf u^{\textit R}\|_{\H^{s}} + \| \mathbf \theta^{\textit R}\|_{H^{s}} + \| \mathbf b^{\textit R}\|_{\H^{s}}\right)
\\&\leq\frac{C}{2}\left( \| \mathbf u^{\textit R}\|^2_{\H^{s}} + \| \mathbf \theta^{\textit R}\|^2_{H^{s}} + \| \mathbf b^{\textit R}\|^2_{\H^{s}}\right)^2 + \frac{3C}{2}\left( \| \mathbf u^{\textit R}\|^2_{\H^{s}} + \| \mathbf \theta^{\textit R}\|^2_{H^{s}} + \| \mathbf b^{\textit R}\|^2_{\H^{s}}\right).
\end{align*}

\noindent Now letting $X(t) =  \| \mathbf u^{\textit R}(t)\|^2_{\H^{s}} + \| \mathbf \theta^{\textit R}(t)\|^2_{H^{s}} + \| \mathbf b^{\textit R}(t)\|^2_{\H^{s}}$, we have,
 \begin{align*}
\frac{d}{dt} X(t) \leq 3C X(t) + X(t)^2 \leq \frac{3}{2}C^2 + (\frac{3}{2}+C) X(t)^2.
\end{align*}

So for all $0 \leq t \leq T,$
\begin{align*}
X(t) \leq X_0 + \frac{3}{2}C^2 + (\frac{3}{2}+C) \int_0^t  X(s)^2 \, ds.
\end{align*}

Now applying Bihari's inequality \cite{DD}, we have

\[X(t) \leq \frac{\frac{3}{2}C^2 + X_0}{1- (\frac{3}{2}C^2 + X_0)(\frac{3}{2}+C)T}.\]

Note that $\|\u^{\textit R}(0)\|_{\H^s}  \leq \|u_0\|_{\H^s}$, $\|\theta^{\textit R}(0)\|_{H^s}  \leq \|\theta_0\|_{H^s}$ and $\|\b^{\textit R}(0)\|_{\H^s}  \leq \|\b_0\|_{\H^s}$. So provided we choose $ T^{\ast} < \frac{1}{(\frac{3}{2}C^2 + X_0)(\frac{3}{2}+C)}$, $\| \mathbf u^{\textit R}\|_{\H^s}, \| \mathbf \theta^{\textit R} \|_{H^s}$  and $\| \mathbf b^{\textit R}\|_{\H^s}$ remain bounded on $[0, T^{\ast}]$ independent of $R$.

\end{proof}

\subsection{Local Existence and Uniqueness.}\label{EU}

In this subsection, we will prove existence and uniqueness of the local-in time strong solution of the magnetic B\'enard problem \eqref{mb1}-\eqref{mb4}.

\begin{proposition}\label{Ca}
The family $(\mathbf u^{\textit R}, \mathbf \theta^{\textit R}, \mathbf b^{\textit R})$ of solutions of the magnetic B\'enard problem \eqref{tr1}-\eqref{tr4} are Cauchy as $R \to \infty$ in the space $L^{\infty}\left([0, T^{\ast}]; \L^2(\mathbb R^{n})\right) \times$ 

\noindent $ L^{\infty}\left([0, T^{\ast}]; L^2(\mathbb R^{n})\right)\times L^{\infty}\left([0, T^{\ast}]; \L^2(\mathbb R^{n})\right)$ .
\end{proposition}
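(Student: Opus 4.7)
The plan is to show that the differences between two approximations stabilise in $L^{2}$ uniformly on $[0,T^{\ast}]$. Fix $R>R'>0$, set $\delta\mathbf u = \mathbf u^{R}-\mathbf u^{R'}$ and similarly $\delta\mathbf\theta$, $\delta\mathbf b$, $\delta p$, and subtract the truncated systems \eqref{tr1}--\eqref{tr3} at levels $R$ and $R'$. Each nonlinearity is decomposed via the algebraic identity
\[
{\mathcal S}_{R}[A^{R}B^{R}] - {\mathcal S}_{R'}[A^{R'}B^{R'}] = {\mathcal S}_{R}[(\delta A)B^{R}] + {\mathcal S}_{R}[A^{R'}(\delta B)] + ({\mathcal S}_{R} - {\mathcal S}_{R'})[A^{R'}B^{R'}],
\]
where the last \emph{truncation-gap} piece is controlled in $L^{2}$ by property (3) of $\mathcal S_{R}$ together with the uniform $H^{s-1}$ bound of the product $A^{R'}B^{R'}$ coming from Proposition \ref{fin} and the algebra property of $H^{s-1}$ (valid since $s-1>n/2$).

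Next, take the $L^{2}$ inner product of the three difference equations with $\delta\mathbf u,\delta\mathbf\theta,\delta\mathbf b$ and sum. Several simplifications occur: the pressure pairing $(\nabla\delta p,\delta\mathbf u)_{L^{2}}$ vanishes by incompressibility; since $R>R'$, both $\mathbf u^{R}$ and $\mathbf u^{R'}$ (and analogously for $\mathbf\theta,\mathbf b$) have Fourier support in $B_{R}$, so transferring $\mathcal S_{R}$ onto the test slot leaves $\delta\mathbf u,\delta\mathbf\theta,\delta\mathbf b$ unchanged; divergence-freeness of $\mathbf u^{R'}$ annihilates $((\mathbf u^{R'}\cdot\nabla)\delta\mathbf u,\delta\mathbf u)_{L^{2}}$, $((\mathbf u^{R'}\cdot\nabla)\delta\mathbf\theta,\delta\mathbf\theta)_{L^{2}}$ and $((\mathbf u^{R'}\cdot\nabla)\delta\mathbf b,\delta\mathbf b)_{L^{2}}$; and divergence-freeness of $\mathbf b^{R'}$ yields the MHD-type cancellation
\[
((\mathbf b^{R'}\cdot\nabla)\delta\mathbf b,\delta\mathbf u)_{L^{2}} + ((\mathbf b^{R'}\cdot\nabla)\delta\mathbf u,\delta\mathbf b)_{L^{2}} = 0.
\]
The surviving nonlinear terms $((\delta\mathbf u\cdot\nabla)\mathbf u^{R},\delta\mathbf u)_{L^{2}}$, $((\delta\mathbf b\cdot\nabla)\mathbf b^{R},\delta\mathbf u)_{L^{2}}$, $((\delta\mathbf b\cdot\nabla)\mathbf u^{R},\delta\mathbf b)_{L^{2}}$ and $((\delta\mathbf u\cdot\nabla)\mathbf\theta^{R},\delta\mathbf\theta)_{L^{2}}$ are each estimated by H\"older with the differenced factor in $L^{2}$ and the smooth factor in $L^{\infty}$, exploiting $H^{s-1}\hookrightarrow L^{\infty}$ together with the uniform $H^{s}$ bound of Proposition \ref{fin}. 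The buoyancy and Rayleigh--B\'enard couplings contribute only the harmless $\|\delta\mathbf\theta\|_{L^{2}}\|\delta\mathbf u\|_{L^{2}}$.

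Setting $Y(t) = \|\delta\mathbf u(t)\|^{2}_{L^{2}} + \|\delta\mathbf\theta(t)\|^{2}_{L^{2}} + \|\delta\mathbf b(t)\|^{2}_{L^{2}}$ and absorbing the truncation-gap terms by Young's inequality produces a differential inequality
\[
\frac{d}{dt}Y(t) \leq C\,Y(t) + \frac{C}{(R')^{2(s-1)}},
\]
while property (3) of $\mathcal S_{R}$ applied to the initial data yields $Y(0) \leq C (R')^{-2s}\bigl(\|\mathbf u_{0}\|_{H^{s}}^{2} + \|\mathbf\theta_{0}\|_{H^{s}}^{2} + \|\mathbf b_{0}\|_{H^{s}}^{2}\bigr)$. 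Gr\"onwall on $[0,T^{\ast}]$ then gives
\[
\sup_{t\in[0,T^{\ast}]} Y(t) \leq \frac{C(T^{\ast})}{(R')^{2(s-1)}} \xrightarrow[R'\to\infty]{} 0,
\]
which is the Cauchy property asserted. The main technical hurdle is the bookkeeping of the various $\mathcal S_{R}$ factors on the bilinear terms so that the incompressibility cancellations truly close; the decomposition above isolates these from the genuinely small truncation remainders and reduces what remains to estimates structurally analogous to Proposition \ref{lfin}.
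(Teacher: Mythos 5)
Your proposal is correct and follows essentially the same route as the paper: the same three-way splitting of each nonlinear difference into a truncation-gap piece plus two difference pieces, the same incompressibility and MHD-type cancellations, H\"older with the undifferenced factor in $L^{\infty}$ via $H^{s-1}\hookrightarrow L^{\infty}$ and Proposition \ref{fin}, and a Gr\"onwall closure (the paper works with the unsquared sum of $L^2$ norms and gets a rate $R^{-\epsilon}$ for $0<\epsilon<s-1$, while your squared-norm-plus-Young variant is an equivalent cosmetic choice). The only slip is that your list of surviving nonlinear terms omits $((\delta\mathbf u\cdot\nabla)\mathbf b^{R},\delta\mathbf b)_{L^{2}}$ coming from the $(\mathbf u\cdot\nabla)\mathbf b$ equation, but it is estimated exactly like the others, so nothing breaks.
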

\begin{proof}
Consider the equations \eqref{tr1}, \eqref{tr2} and \eqref{tr3}. Then taking the difference between the equations for $R$ and $R'$ with $R' > R$ we get, 
\begin{align}\label{dif1}
&\frac{\partial }{\partial t}\left(\mathbf u^{\textit R} - \mathbf u^{\textit R'}\right) + \nabla \left(p^{\textit R} - p^{\textit R'}\right)\nonumber\\ 
&\quad = \theta^{\textit R} e_n - \theta^{\textit R'} e_n - {\mathcal S}_{\textit R} \left[ (\mathbf u^{\textit R} \cdot \nabla)\mathbf u^{\textit R} \right] + {\mathcal S}_{\textit R'} \left[ (\mathbf u^{\textit R'} \cdot \nabla)\mathbf u^{\textit R'} \right] \nonumber\\
&\quad\quad + {\mathcal S}_{\textit R} \left[ (\mathbf b^{\textit R} \cdot \nabla)\mathbf b^{\textit R} \right] - {\mathcal S}_{\textit R'} \left[ (\mathbf b^{\textit R'} \cdot \nabla)\mathbf b^{\textit R'} \right],
\end{align}
\begin{align}\label{dif2}
\frac{\partial}{\partial t} \left(\mathbf \theta^{\textit R}- \mathbf \theta^{\textit R'}\right) + {\mathcal S}_{\textit R} \left[ (\mathbf u^{\textit R} \cdot \nabla)\mathbf \theta^{\textit R} \right] - {\mathcal S}_{\textit R'} \left[ (\mathbf u^{\textit R'} \cdot \nabla)\mathbf \theta^{\textit R'} \right] = \mathbf u^{\textit R} \cdot e_n - \mathbf u^{\textit R'} \cdot e_n  ,
\end{align}
\begin{align}\label{dif3}
&\frac{\partial }{\partial t} \left(\mathbf b^{\textit R} - \mathbf b^{\textit R'}\right)+ {\mathcal S}_{\textit R}\left[ (\mathbf u^{\textit R} \cdot \nabla)\mathbf b^{\textit R} \right] -  {\mathcal S}_{\textit R'} \left[ (\mathbf u^{\textit R'} \cdot \nabla)\mathbf b^{\textit R'} \right]\nonumber\\
&\quad = {\mathcal S}_{\textit R} \left[ (\mathbf b^{\textit R} \cdot \nabla)\mathbf u^{\textit R} \right]-  {\mathcal S}_{\textit R'} \left[ (\mathbf b^{\textit R'} \cdot \nabla)\mathbf u^{\textit R'} \right].
 \end{align}
Taking inner product of \eqref{dif1}, \eqref{dif2} and \eqref{dif3} with $\mathbf u^{\textit R} - \mathbf u^{\textit R'}$, $\mathbf \theta^{\textit R}- \mathbf \theta^{\textit R'}$ and $\mathbf b^{\textit R} - \mathbf b^{\textit R'}$ respectively, and then adding we get

\begin{align}\label{utbse}
\frac{1}{2}&\frac{d}{dt} \left( \|\mathbf u^{\textit R} - \mathbf u^{\textit R'}\|^2_{\L^2} + \|\mathbf \theta^{\textit R} - \mathbf \theta^{\textit R'}\|^2_{L^2} + \|\mathbf b^{\textit R} - \mathbf b^{\textit R'}\|^2_{\L^2}\right) \nonumber
\\ &=\left(\theta^{\textit R} e_n - \theta^{\textit R'} e_n, \mathbf u^{\textit R} -  \mathbf u^{\textit R'}\right) - \left( \mathbf u^{\textit R} \cdot e_n - \mathbf u^{\textit R'} \cdot e_n , \mathbf \theta^{\textit R}- \mathbf \theta^{\textit R'}\right)\nonumber
\\ &\quad\quad-\underbrace{\left(  {\mathcal S}_{\textit R} \left[ (\mathbf u^{\textit R} \cdot \nabla)\mathbf u^{\textit R} \right] - {\mathcal S}_{\textit R'} \left[ (\mathbf u^{\textit R'} \cdot \nabla)\mathbf u^{\textit R'} \right], \mathbf u^{\textit R} - \mathbf u^{\textit R'}\right)}_{I_1}\nonumber
\\ &\quad\quad+ \underbrace{\left(  {\mathcal S}_{\textit R} \left[ (\mathbf b^{\textit R} \cdot \nabla)\mathbf b^{\textit R} \right] - {\mathcal S}_{\textit R'} \left[ (\mathbf b^{\textit R'} \cdot \nabla)\mathbf b^{\textit R'} \right], \mathbf u^{\textit R} - \mathbf u^{\textit R'}\right)}_{I_2}\nonumber
\\ &\quad\quad-\underbrace{\left(  {\mathcal S}_{\textit R} \left[ (\mathbf u^{\textit R} \cdot \nabla)\mathbf \theta^{\textit R} \right] - {\mathcal S}_{\textit R'} \left[ (\mathbf u^{\textit R'} \cdot \nabla)\mathbf \theta^{\textit R'} \right], \mathbf \theta^{\textit R} - \mathbf \theta^{\textit R'}\right)}_{I_3}\nonumber
\\ &\quad\quad+ \underbrace{\left(  {\mathcal S}_{\textit R} \left[ (\mathbf b^{\textit R} \cdot \nabla)\mathbf u^{\textit R} \right] - {\mathcal S}_{\textit R'} \left[ (\mathbf b^{\textit R'} \cdot \nabla)\mathbf u^{\textit R'} \right], \mathbf b^{\textit R} - \mathbf b^{\textit R'}\right)}_{I_4}\nonumber
\\ &\quad\quad-\underbrace{\left(  {\mathcal S}_{\textit R} \left[ (\mathbf u^{\textit R} \cdot \nabla)\mathbf b^{\textit R} \right] - {\mathcal S}_{\textit R'} \left[ (\mathbf u^{\textit R'} \cdot \nabla)\mathbf b^{\textit R'} \right], \mathbf b^{\textit R} - \mathbf b^{\textit R'}\right)}_{I_5}
 \end{align}
We will calculate each term on the right hand side of \eqref{utbse} separately. First observe that,
\begin{align}\label{I0}
\left|\left(\theta^{\textit R} e_n - \theta^{\textit R'} e_n, \mathbf u^{\textit R} -  \mathbf u^{\textit R'}\right)\right| &\leq \|\theta^{\textit R} e_n - \theta^{\textit R'} e_n\|_{L^2} \|\mathbf u^{\textit R} -  \mathbf u^{\textit R'}\|_{\L^2}\nonumber 
\\ &\leq \|\theta^{\textit R} - \theta^{\textit R'}\|_{L^2} \|\mathbf u^{\textit R} -  \mathbf u^{\textit R'}\|_{\L^2},
\end{align}
and
\begin{align}\label{I00}
\left| \left( \mathbf u^{\textit R} \cdot e_n - \mathbf u^{\textit R'} \cdot e_n , \mathbf \theta^{\textit R} - \mathbf \theta^{\textit R'}\right) \right| &\leq \|\mathbf u^{\textit R} \cdot e_n - \mathbf u^{\textit R'} \cdot e_n\|_{\L^2} \| \mathbf \theta^{\textit R} - \mathbf \theta^{\textit R'}\|_{L^2}\nonumber
\\ &\leq  \|\mathbf u^{\textit R} - \mathbf u^{\textit R'} \|_{\L^2} \| \mathbf \theta^{\textit R} - \mathbf \theta^{\textit R'}\|_{L^2}.
\end{align}
We split $I_1 = \left( {\mathcal S}_{\textit R} \left[ (\mathbf u^{\textit R} \cdot \nabla)\mathbf u^{\textit R} \right] - {\mathcal S}_{\textit R'} \left[ (\mathbf u^{\textit R'} \cdot \nabla)\mathbf u^{\textit R'} \right], \mathbf u^{\textit R} - \mathbf u^{\textit R'}\right)$ in to three parts:
\begin{align}\label{sur} 
&I_1=\left( ({\mathcal S}_{\textit R}-{\mathcal S}_{\textit R'}) \left[ (\mathbf u^{\textit R} \cdot \nabla)\mathbf u^{\textit R} \right], \mathbf u^{\textit R} - \mathbf u^{\textit R'}\right) \nonumber
\\ &\quad+ \left( {\mathcal S}_{\textit R'} \left[ ((\mathbf u^{\textit R}-\mathbf u^{\textit R'}) \cdot \nabla)\mathbf u^{\textit R} \right], \mathbf u^{\textit R} - \mathbf u^{\textit R'}\right)\nonumber
\\ &\quad + \left( {\mathcal S}_{\textit R'} \left[ (\mathbf u^{\textit R'} \cdot \nabla)(\mathbf u^{\textit R} - \mathbf u^{\textit R'}) \right], \mathbf u^{\textit R} - \mathbf u^{\textit R'}\right).
\end{align}

\noindent For $R' > R$, using the property of Fourier truncation operator provided $0 < \epsilon < s-1$, the first term of \eqref{sur} becomes
\begin{align}\label{sruuu}
&\left| \left( ({\mathcal S}_{\textit R}-{\mathcal S}_{\textit R'}) \left[ (\mathbf u^{\textit R} \cdot \nabla)\mathbf u^{\textit R} \right], \mathbf u^{\textit R} - \mathbf u^{\textit R'}\right) \right|\nonumber
\\&\quad\leq \| ({\mathcal S}_{\textit R}-{\mathcal S}_{\textit R'}) \left[ (\mathbf u^{\textit R} \cdot \nabla)\mathbf u^{\textit R} \right] \|_{\L^2} \| \mathbf u^{\textit R} - \mathbf u^{\textit R'}\|_{\L^2}  \nonumber
\\ &\quad \leq \frac{C}{R^\epsilon}\| (\mathbf u^{\textit R} \cdot \nabla)\mathbf u^{\textit R}\|_{\H^\epsilon} \| \mathbf u^{\textit R} - \mathbf u^{\textit R'}\|_{\L^2} = \frac{C}{R^\epsilon}\| \nabla \cdot (\mathbf u^{\textit R} \otimes \mathbf u^{\textit R})\|_{\H^\epsilon} \| \mathbf u^{\textit R} - \mathbf u^{\textit R'}\|_{\L^2} \nonumber
\\ &\quad \leq \frac{C}{R^\epsilon}\| \mathbf u^{\textit R} \otimes \mathbf u^{\textit R}\|_{\H^s} \| \mathbf u^{\textit R} - \mathbf u^{\textit R'}\|_{\L^2} \leq \frac{C}{R^\epsilon}\|\mathbf u^{\textit R}\|^2_{\H^s} \| \mathbf u^{\textit R} - \mathbf u^{\textit R'}\|_{\L^2}.
\end{align}

\noindent Now for $s > n/2 + 1,$ the second term of \eqref{sur}
\begin{align}\label{sruu}
&\left|\left( {\mathcal S}_{\textit R'} \left[ ((\mathbf u^{\textit R}-\mathbf u^{\textit R'}) \cdot \nabla)\mathbf u^{\textit R} \right], \mathbf u^{\textit R} - \mathbf u^{\textit R'}\right)\right| \nonumber
 \\ &\quad\leq  \|((\mathbf u^{\textit R}-\mathbf u^{\textit R'}) \cdot \nabla)\mathbf u^{\textit R}\|_{\L^2} \|\mathbf u^{\textit R}-\mathbf u^{\textit R'}\|_{\L^2} \nonumber
\\ &\quad\leq \|\mathbf u^{\textit R}-\mathbf u^{\textit R'}\|_{\L^2} \|\nabla \mathbf u^{\textit R}\|_{L^{\infty}} \|\mathbf u^{\textit R}-\mathbf u^{\textit R'}\|_{\L^2} \nonumber
 \\ &\quad\leq \|\nabla \mathbf u^{\textit R}\|_{\H^{s-1}} \|\mathbf u^{\textit R}-\mathbf u^{\textit R'}\|^2_{\L^2}\leq \|\mathbf u^{\textit R}\|_{\H^{s}} \|\mathbf u^{\textit R}-\mathbf u^{\textit R'}\|^2_{\L^2}.
\end{align}

\noindent Using weak Parseval's identity, integration by parts and divergence free condition on $\mathbf u^{\textit R}$ and $\mathbf u^{\textit R'}$ to the third term of \eqref{sur} we get,
\begin{align*}
\left( {\mathcal S}_{\textit R'} \left[ (\mathbf u^{\textit R'} \cdot \nabla)(\mathbf u^{\textit R} - \mathbf u^{\textit R'}) \right], \mathbf u^{\textit R} - \mathbf u^{\textit R'}\right) = 0.
\end{align*}
Therefore we obtain, using \eqref{sruuu}, \eqref{sruu} in \eqref{sur},
\begin{align}\label{I1}
|I_1| \leq \frac{C}{R^\epsilon}\|\mathbf u^{\textit R}\|^2_{\H^s} \| \mathbf u^{\textit R} - \mathbf u^{\textit R'}\|_{\L^2} + \|\mathbf u^{\textit R}\|_{\H^{s}} \|\mathbf u^{\textit R}-\mathbf u^{\textit R'}\|^2_{\L^2}.
\end{align}
Similarly we split $I_3$ and $I_5$ to obtain, 
\begin{align}\label{I3}
|I_3| \leq \frac{C}{\textit R^{\epsilon}} \| \mathbf u^{\textit R}\|_{\H^s} \| \mathbf \theta^{\textit R} \|_{H^s} \| \mathbf \theta^{\textit R} - \mathbf \theta^{\textit R'}\|_{L^2} +  \| \mathbf u^{\textit R}-\mathbf u^{\textit R'}\|_{\L^2}  \|\theta^{\textit R}\|_{H^s} \|\mathbf \theta^{\textit R} - \mathbf \theta^{\textit R'}\|_{L^2}.
\end{align}
and
\begin{align}\label{I5}
|I_5| \leq \frac{C}{\textit R^{\epsilon}} \| \mathbf u^{\textit R}\|_{\H^s} \| \mathbf b^{\textit R} \|_{\H^s} \| \mathbf b^{\textit R} - \mathbf b^{\textit R'}\|_{\L^2} +  \| \mathbf u^{\textit R}-\mathbf u^{\textit R'}\|_{\L^2}  \|\b^{\textit R}\|_{\H^s} \|\mathbf b^{\textit R} - \mathbf b^{\textit R'}\|_{\L^2}.
\end{align}
We spilt $I_2$ and $I_4$ in the similar manner. However, note that one term of $I_2$ will cancel with one term of $I_4$ due to 
\begin{align*}
\left( {\mathcal S}_{\textit R'} \left[ (\mathbf b^{\textit R'} \cdot \nabla)(\mathbf b^{\textit R} - \mathbf b^{\textit R'}) \right], \mathbf u^{\textit R} - \mathbf u^{\textit R'}\right) = -\left( (\mathbf b^{\textit R'} \cdot \nabla)(\mathbf u^{\textit R} - \mathbf u^{\textit R'}), \mathbf b^{\textit R} - \mathbf b^{\textit R'}\right).
\end{align*}
Therefore we have,
\begin{align}\label{I2}
I_2 \leq \frac{C}{\textit R^{\epsilon}}\|\mathbf b^{\textit R}\|^{2}_{\H^{s}}  \|\mathbf u^{\textit R} - \mathbf u^{\textit R'}\|_{\L^2} + \|\mathbf b^{\textit R}\|_{\H^{s}} \| \mathbf b^{\textit R}-\mathbf b^{\textit R'}\|_{\L^2}  \|\mathbf u^{\textit R} - \mathbf u^{\textit R'}\|_{\L^2},
\end{align}
and
\begin{align}\label{I4}
I_4\leq \frac{C}{\textit R^{\epsilon}}\|\mathbf b^{\textit R}\|_{\H^{s}}  \|\mathbf u^{\textit R}\|_{\H^{s}} \|\mathbf b^{\textit R} - \mathbf b^{\textit R'}\|_{\L^2} + \|\mathbf u^{\textit R}\|_{\H^{s}} \| \mathbf b^{\textit R}-\mathbf b^{\textit R'}\|^{2}_{\L^2}.
\end{align}
\noindent Using the estimates obtained in \eqref{I0}, \eqref{I00}, \eqref{I1}-\eqref{I4} in \eqref{utbse}, we have
\begin{align*}
\frac{1}{2}&\frac{d}{dt} \left( \|\mathbf u^{\textit R} - \mathbf u^{\textit R'}\|^2_{\L^2} + \|\mathbf \theta^{\textit R} - \mathbf \theta^{\textit R'}\|^2_{L^2} + \|\mathbf b^{\textit R} - \mathbf b^{\textit R'}\|^2_{\L^2}\right)
\\ &\leq \frac{C}{\textit R^{\epsilon}}  \|\mathbf u^{\textit R}\|^{2}_{\H^{s}} \|\mathbf u^{\textit R} - \mathbf u^{\textit R'}\|_{\L^2} + \|\mathbf u^{\textit R}\|_{\H^{s}} \|\mathbf u^{\textit R} - \mathbf u^{\textit R'}\|^{2}_{\L^2} 
\\ &\quad +  \frac{C}{\textit R^{\epsilon}}\|\mathbf u^{\textit R}\|_{\H^{s}}  \|\mathbf \theta^{\textit R}\|_{H^{s}} \|\mathbf \theta^{\textit R} - \mathbf \theta^{\textit R'}\|_{L^2}+ \|\mathbf b^{\textit R}\|_{\H^{s}} \| \mathbf u^{\textit R}-\mathbf u^{\textit R'}\|_{\L^2} \| \mathbf b^{\textit R}-\mathbf b^{\textit R'}\|_{\L^2}
\\ &\quad+ \|\mathbf \theta^{\textit R}\|_{H^{s}} \| \mathbf u^{\textit R}-\mathbf u^{\textit R'}\|_{\L^2} \| \mathbf \theta^{\textit R}-\mathbf \theta^{\textit R'}\|_{L^2} + \frac{C}{\textit R^{\epsilon}}\|\mathbf b^{\textit R}\|_{\H^{s}}  \|\mathbf u^{\textit R}\|_{\H^{s}} \|\mathbf b^{\textit R} - \mathbf b^{\textit R'}\|_{\L^2}
\\ &\quad+ \frac{C}{\textit R^{\epsilon}}\|\mathbf b^{\textit R}\|^{2}_{\H^{s}}  \|\mathbf u^{\textit R} - \mathbf u^{\textit R'}\|_{\L^2} + \|\mathbf b^{\textit R}\|_{\H^{s}} \| \mathbf u^{\textit R}-\mathbf u^{\textit R'}\|_{\L^2}  \| \mathbf b^{\textit R}-\mathbf b^{\textit R'}\|_{\L^2}
\\ &\quad+  \frac{C}{\textit R^{\epsilon}}\|\mathbf b^{\textit R}\|_{\H^{s}}  \|\mathbf u^{\textit R}\|_{\H^{s}} \|\mathbf b^{\textit R} - \mathbf b^{\textit R'}\|_{\L^2} + \|\mathbf u^{\textit R}\|_{\H^{s}} \| \mathbf b^{\textit R}-\mathbf b^{\textit R'}\|^{2}_{\L^2}
\\ &\quad+ 2 \|\mathbf \theta^{\textit R} - \mathbf \theta^{\textit R'}\|_{L^2} \|\mathbf u^{\textit R} - \mathbf u^{\textit R'}\|_{\L^2}.
\end{align*}
Applying Proposition \ref{fin} and Young's inequality and rearranging the terms we obtain,
\begin{align}\label{ddt}
\frac{d}{dt}& \left( \|\mathbf u^{\textit R} - \mathbf u^{\textit R'}\|^2_{\L^2} + \|\mathbf \theta^{\textit R} - \mathbf \theta^{\textit R'}\|^2_{L^2} + \|\mathbf b^{\textit R} - \mathbf b^{\textit R'}\|^2_{\L^2}\right)\nonumber
\\ &\leq \frac{C_1}{\textit R^{\epsilon}} \|\mathbf u^{\textit R} - \mathbf u^{\textit R'}\|_{\L^2} + C_2 \|\mathbf u^{\textit R} - \mathbf u^{\textit R'}\|^{2}_{\L^2} + 2 \left(\|\mathbf \theta^{\textit R} - \mathbf \theta^{\textit R'}\|^2_{L^2} + \|\mathbf u^{\textit R} - \mathbf u^{\textit R'}\|^2_{\L^2}\right) \nonumber
\\ &\quad\quad+  \frac{C_3}{\textit R^{\epsilon}} \|\mathbf \theta^{\textit R} - \mathbf \theta^{\textit R'}\|_{L^2} + C_4 \left(\|\mathbf \theta^{\textit R} - \mathbf \theta^{\textit R'}\|^{2}_{L^2}  + \|\mathbf u^{\textit R} - \mathbf u^{\textit R'}\|^2_{\L^2} \right)\nonumber
\\ &\quad\quad+ \frac{C_5}{\textit R^{\epsilon}} \|\mathbf b^{\textit R} - \mathbf b^{\textit R'}\|_{\L^2}
+ C_6 \left( \| \mathbf u^{\textit R}-\mathbf u^{\textit R'}\|^2_{\L^2} + \| \mathbf b^{\textit R}-\mathbf b^{\textit R'}\|^2_{\L^2} \right)\nonumber
\\ &\quad\quad+ \frac{C_7}{\textit R^{\epsilon}} \|\mathbf u^{\textit R} - \mathbf u^{\textit R'}\|_{\L^2} + C_8\left( \| \mathbf u^{\textit R}-\mathbf u^{\textit R'}\|^2_{\L^2} +  \| \mathbf b^{\textit R}-\mathbf b^{\textit R'}\|^2_{\L^2} \right) \nonumber
\\ &\quad\quad +C_{10} \| \mathbf b^{\textit R}-\mathbf b^{\textit R'}\|^{2}_{\L^2} \nonumber
\\ &\leq \frac{M}{\textit R^{\epsilon}} \left( \|\mathbf u^{\textit R} - \mathbf u^{\textit R'}\|_{\L^2} + \|\mathbf \theta^{\textit R} - \mathbf \theta^{\textit R'}\|_{L^2} + \|\mathbf b^{\textit R} - \mathbf b^{\textit R'}\|_{\L^2}\right)\nonumber
\\ &\quad\quad+ M \left( \|\mathbf u^{\textit R} - \mathbf u^{\textit R'}\|^2_{\L^2} + \|\mathbf \theta^{\textit R} - \mathbf \theta^{\textit R'}\|^2_{L^2} + \|\mathbf b^{\textit R} - \mathbf b^{\textit R'}\|^2_{\L^2}\right).
\end{align}
Let $Y(t) = \|\mathbf u^{\textit R} - \mathbf u^{\textit R'}\|_{\L^2} + \|\mathbf \theta^{\textit R} - \mathbf \theta^{\textit R'}\|_{L^2} + \|\mathbf b^{\textit R} - \mathbf b^{\textit R'}\|_{\L^2}$, then
\begin{align*}
\|\mathbf u^{\textit R} - \mathbf u^{\textit R'}\|^2_{\L^2} &+ \|\mathbf \theta^{\textit R} - \mathbf \theta^{\textit R'}\|^2_{L^2} + \|\mathbf b^{\textit R} - \mathbf b^{\textit R'}\|^2_{\L^2}
\\ &\leq Y(t)^2 \leq 3 \left( \|\mathbf u^{\textit R} - \mathbf u^{\textit R'}\|^2_{\L^2} + \|\mathbf \theta^{\textit R} - \mathbf \theta^{\textit R'}\|^2_{L^2} + \|\mathbf b^{\textit R} - \mathbf b^{\textit R'}\|^2_{\L^2}\right).
\end{align*}
So
\begin{align*}
\frac{d}{dt} \left( Y(t)^2 \right) \leq 3 \frac{d}{dt} \left( \|\mathbf u^{\textit R} - \mathbf u^{\textit R'}\|^2_{\L^2} + \|\mathbf \theta^{\textit R} - \mathbf \theta^{\textit R'}\|^2_{L^2} + \|\mathbf b^{\textit R} - \mathbf b^{\textit R'}\|^2_{\L^2}\right)
\end{align*}
\begin{align*}
2Y\frac{dY}{dt} \leq 3 \frac{d}{dt} \left( \|\mathbf u^{\textit R} - \mathbf u^{\textit R'}\|^2_{\L^2} + \|\mathbf \theta^{\textit R} - \mathbf \theta^{\textit R'}\|^2_{L^2} + \|\mathbf b^{\textit R} - \mathbf b^{\textit R'}\|^2_{\L^2}\right)
\end{align*}
Then from \eqref{ddt} we obtain,
\begin{align*}
\frac{dY}{dt} \leq MY + \frac{M}{\textit R^{\epsilon}}.
\end{align*}
Finally applying Gronwall's lemma, we observe
\begin{align}\label{ytr}
 \sup_{t \in [0, T^{\ast}]} Y(t) \leq \frac{C(M, T^{\ast})}{R^\epsilon} \to 0,
\end{align}
as $R\to\infty$ (as $R' > R$,  $R' \to \infty$ as well), concluding that $(\mathbf u^{\textit R}, \mathbf \theta^{\textit R}, \mathbf b^{\textit R})$ are Cauchy in $L^{\infty}\left([0, T^{\ast}]; \L^2(\mathbb R^n)\right) \times L^{\infty}\left([0, T^{\ast}]; L^2(\mathbb R^n)\right) \times L^{\infty}\left([0, T^{\ast}]; \L^2(\mathbb R^n)\right)$ as $R \to \infty$.
\end{proof}

\begin{proposition}
For any $s' > n/2 +1$ with $s' < s$, $(\mathbf u^{\textit R}, \mathbf \theta^{\textit R}, \mathbf b^{\textit R}) \to (\mathbf u, \mathbf \theta, \mathbf b)$ in 

\noindent $L^{\infty}\left([0, T^{\ast}]; \H^{s'}(\mathbb R^n)\right) \times L^{\infty}\left([0, T^{\ast}]; H^{s'}(\mathbb R^n)\right) \times L^{\infty}\left([0, T^{\ast}]; \H^{s'}(\mathbb R^n)\right).$ 
\end{proposition}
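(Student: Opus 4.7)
The plan is to combine the uniform $H^s$ bound from Proposition \ref{fin} with the $L^2$ Cauchy property from Proposition \ref{Ca} via the Sobolev interpolation inequality of Lemma \ref{iss}. Since $(\mathbf u^{\textit R}, \mathbf \theta^{\textit R}, \mathbf b^{\textit R})$ is Cauchy in $L^{\infty}([0, T^{\ast}]; \L^2(\mathbb R^n)\times L^2(\mathbb R^n)\times \L^2(\mathbb R^n))$ with an explicit decay of order $R^{-\epsilon}$, and uniformly bounded in $L^{\infty}([0, T^{\ast}]; \H^s\times H^s\times \H^s)$, interpolation at an intermediate level $s' \in (n/2+1, s)$ will automatically upgrade the $L^2$ Cauchy estimate to a Cauchy estimate at the $H^{s'}$ level.

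Concretely, I apply Lemma \ref{iss} componentwise to each of the three differences. For the velocity component, for instance,
\begin{align*}
\|\mathbf u^{\textit R}(t) - \mathbf u^{\textit R'}(t)\|_{\H^{s'}}
\leq C\,\|\mathbf u^{\textit R}(t) - \mathbf u^{\textit R'}(t)\|_{\L^2}^{1-s'/s}\,\|\mathbf u^{\textit R}(t) - \mathbf u^{\textit R'}(t)\|_{\H^s}^{s'/s}.
\end{align*}
The triangle inequality combined with Proposition \ref{fin} bounds the $\H^s$ factor uniformly in $R, R'$ and $t \in [0, T^{\ast}]$, while the estimate \eqref{ytr} from Proposition \ref{Ca} gives $\sup_{t}\|\mathbf u^{\textit R}(t) - \mathbf u^{\textit R'}(t)\|_{\L^2} \leq C R^{-\epsilon}$. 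Together these yield
\[
\sup_{t \in [0, T^{\ast}]} \|\mathbf u^{\textit R}(t) - \mathbf u^{\textit R'}(t)\|_{\H^{s'}} \leq C\,R^{-\epsilon(1-s'/s)} \longrightarrow 0 \quad \textrm{as } R \to \infty,
\]
and the same argument with $\mathbf \theta^{\textit R}$ and $\mathbf b^{\textit R}$ establishes the Cauchy property in the full product space.

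By completeness of $L^{\infty}([0, T^{\ast}]; \H^{s'}\times H^{s'}\times \H^{s'})$, the sequence admits a limit $(\tilde{\mathbf u}, \tilde{\mathbf \theta}, \tilde{\mathbf b})$ in that space. Since convergence in $L^{\infty}([0,T^{\ast}]; \H^{s'})$ entails convergence in $L^{\infty}([0,T^{\ast}]; \L^2)$, this limit must coincide with the $L^2$ limit $(\mathbf u, \mathbf \theta, \mathbf b)$ already extracted in Proposition \ref{Ca}, giving the claim. I do not expect any serious obstacle: once the $L^2$ Cauchy estimate and the uniform $H^s$ bound are in hand, the interpolation is essentially routine. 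The constraint $s' > n/2+1$ plays no role in the convergence itself; it is imposed so that the limit $(\mathbf u, \mathbf \theta, \mathbf b)$ inherits enough regularity (via Remark \ref{prodhs}) to give classical meaning to the nonlinear terms in \eqref{mb1}--\eqref{mb3} when passing to the limit in the next step.
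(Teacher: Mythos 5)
Your proof is correct and follows essentially the same route as the paper: Sobolev interpolation (Lemma \ref{iss}) applied between the $L^2$ Cauchy estimate of Proposition \ref{Ca} and the uniform $H^s$ bound of Proposition \ref{fin}. The only (harmless) difference is that you interpolate the differences $\mathbf u^{R}-\mathbf u^{R'}$ and invoke completeness of $L^{\infty}([0,T^{\ast}];H^{s'})$, whereas the paper interpolates $\mathbf u^{R}-\mathbf u$ directly against the limit; your variant in fact avoids needing to know beforehand that the limit lies in $H^s$.
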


\begin{proof}
From Proposition \ref{Ca} we conclude that $(\mathbf u^{\textit R}, \mathbf \theta^{\textit R}, \mathbf b^{\textit R}) \to (\mathbf u, \mathbf \theta, \mathbf b)$ strongly in 

$L^{\infty}\left([0, T^{\ast}]; \L^2(\mathbb R^n)\right) \times L^{\infty}\left([0, T^{\ast}]; L^2(\mathbb R^n)\right) \times L^{\infty}\left([0, T^{\ast}]; \L^2(\mathbb R^n)\right).$

Using Lemma \ref{iss} for $s' < s$ and $s' > n/2 +1,$
\begin{align*}
\sup_{t \in [0, T^{\ast}]} \|\mathbf b^{\textit R} - \mathbf b\|_{\H^{s'}} &\leq C \sup_{t \in [0, T^{\ast}]} \left(\|\mathbf b^{\textit R} - \mathbf b\|_{\L^{2}}^{1-s'/s} \|\mathbf b^{\textit R} - \mathbf b\|_{\H^{s}}^{s'/s}\right)
\\ &\leq C \left( \sup_{t \in [0, T^{\ast}]} \|\mathbf b^{\textit R} - \mathbf b\|_{\L^{2}}\right)^{\frac{1-s'}{s}} \left( \sup_{t \in [0, T^{\ast}]}  \|\mathbf b^{\textit R} - \mathbf b\|_{\H^{s}}\right)^{\frac{s'}{s}}.
\end{align*}

From Proposition \ref{fin} and Proposition \ref{Ca} we obtain
\begin{align*}
\sup_{t \in [0,T^{\ast}]} \|\mathbf b^{\textit R} - \mathbf b\|_{\H^{s'}} \leq M  \left( \sup_{t \in [0,T^{\ast}]} \|\mathbf b^{\textit R} - \mathbf b\|_{\L^{2}}\right)^{1-s'/s} \to 0 \quad \quad as \quad R \to \infty.
\end{align*}

So we get,
\begin{align}\label{urcu}
\mathbf b^{\textit R} \to \mathbf b\left.\right.\left.\right. in \left.\right.\left.\right. L^{\infty}\left([0, T^{\ast}]; \H^{s'}(\mathbb R^n)\right),
\end{align}

Similarly we can show
\begin{align*}
 \mathbf \theta^{\textit R} \to  \mathbf \theta \quad in \quad L^{\infty}\left([0, T^{\ast}]; H^{s'}(\mathbb R^n)\right),\quad \mathbf u^{\textit R} \to \mathbf u \quad in\quad L^{\infty} \left([0, T^{\ast}]; \H^{s'}(\mathbb R^n)\right).
\end{align*}
\end{proof}

\begin{proposition}
For any $s' > n/2 +1$, as $R \to \infty$ the non-linear terms 
\begin{align*}
&{\mathcal S}_{\textit R} \left[ (\mathbf u^{\textit R} \cdot \nabla)\mathbf u^{\textit R} \right] \to (\mathbf u \cdot \nabla)\mathbf u,  {\mathcal S}_{\textit R} \left[ (\mathbf b^{\textit R} \cdot \nabla)\mathbf b^{\textit R} \right] \to (\mathbf b \cdot \nabla)\mathbf b,  {\mathcal S}_{\textit R} \left[ (\mathbf u^{\textit R} \cdot \nabla)\mathbf b^{\textit R} \right] \to
\\&  (\mathbf u \cdot \nabla)\mathbf b,{\mathcal S}_{\textit R} \left[ (\mathbf b^{\textit R} \cdot \nabla)\mathbf u^{\textit R} \right] \to (\mathbf b \cdot \nabla)\mathbf u,\ \textrm{ strongly\ in} \ L^{\infty}\left([0, T^{\ast}]; \H^{s'-1}(\mathbb R^n)\right), 
\\&\textrm{and} \ 
{\mathcal S}_{\textit R} \left[ (\mathbf u^{\textit R} \cdot \nabla)\mathbf \theta^{\textit R} \right] \to (\mathbf u \cdot \nabla)\mathbf \theta,\ \textrm{strongly\ in} \  L^{\infty}\left([0, T^{\ast}]; H^{s'-1}(\mathbb R^n)\right).
\end{align*}
\end{proposition}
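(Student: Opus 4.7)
The plan is to handle all five convergences by a single template, using only (i) boundedness of $\mathcal{S}_R$ on Sobolev spaces (property (1)), (ii) the approximation estimate (property (2)), (iii) the bilinear estimate in Lemma \ref{div}, (iv) the uniform $H^s$-bound from Proposition \ref{fin}, and (v) the strong $H^{s'}$-convergence $(\mathbf u^R,\mathbf \theta^R,\mathbf b^R)\to(\mathbf u,\mathbf \theta,\mathbf b)$ established in the previous proposition.

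I take $\mathcal{S}_R[(\mathbf u^R\cdot\nabla)\mathbf u^R]\to(\mathbf u\cdot\nabla)\mathbf u$ as the model. The telescoping decomposition I will use is
\begin{align*}
\mathcal{S}_R[(\mathbf u^R\cdot\nabla)\mathbf u^R] - (\mathbf u\cdot\nabla)\mathbf u
&= \mathcal{S}_R\bigl[((\mathbf u^R - \mathbf u)\cdot\nabla)\mathbf u^R\bigr] + \mathcal{S}_R\bigl[(\mathbf u\cdot\nabla)(\mathbf u^R - \mathbf u)\bigr] \\
&\quad + (\mathcal{S}_R - I)\bigl[(\mathbf u\cdot\nabla)\mathbf u\bigr].
\end{align*}
For the first two summands, property (1) of $\mathcal{S}_R$ together with Lemma \ref{div} (whose divergence-free hypothesis is met since $\mathbf u^R-\mathbf u$ and $\mathbf u$ are both divergence-free) yields
\[
\|\mathcal{S}_R[((\mathbf u^R - \mathbf u)\cdot\nabla)\mathbf u^R]\|_{H^{s'-1}} \leq C\|\mathbf u^R - \mathbf u\|_{H^{s'}}\|\mathbf u^R\|_{H^{s'}},
\]
\[
\|\mathcal{S}_R[(\mathbf u\cdot\nabla)(\mathbf u^R - \mathbf u)]\|_{H^{s'-1}} \leq C\|\mathbf u\|_{H^{s'}}\|\mathbf u^R - \mathbf u\|_{H^{s'}}.
\]
Proposition \ref{fin} bounds $\|\mathbf u^R\|_{H^{s'}}\leq \|\mathbf u^R\|_{H^s}$ uniformly in $R$, and by lower semicontinuity the same bound passes to $\mathbf u$; the previous proposition sends $\|\mathbf u^R-\mathbf u\|_{H^{s'}}\to 0$ uniformly on $[0,T^\ast]$. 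For the third summand, property (2) with exponent $k = s - s' > 0$ and Lemma \ref{div} give
\[
\|(\mathcal{S}_R - I)[(\mathbf u\cdot\nabla)\mathbf u]\|_{H^{s'-1}} \leq \frac{C}{R^{\,s-s'}}\|(\mathbf u\cdot\nabla)\mathbf u\|_{H^{s-1}} \leq \frac{C}{R^{\,s-s'}}\|\mathbf u\|_{H^s}^2 \longrightarrow 0.
\]
Taking the supremum over $t\in[0,T^\ast]$ closes this case.

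The remaining four convergences repeat the scheme verbatim. For $\mathcal{S}_R[(\mathbf b^R\cdot\nabla)\mathbf b^R]$, $\mathcal{S}_R[(\mathbf u^R\cdot\nabla)\mathbf b^R]$, and $\mathcal{S}_R[(\mathbf b^R\cdot\nabla)\mathbf u^R]$, the first slot of the bilinear form is in every subterm either $\mathbf u^R-\mathbf u$, $\mathbf b^R-\mathbf b$, $\mathbf u$, or $\mathbf b$, each divergence-free, so Lemma \ref{div} still applies and the same triple splitting plus the $H^{s'}$-convergences of $\mathbf u^R$ and $\mathbf b^R$ finish the job. For $\mathcal{S}_R[(\mathbf u^R\cdot\nabla)\mathbf \theta^R]\to(\mathbf u\cdot\nabla)\mathbf \theta$, the scalar version of Lemma \ref{div} (which follows instantly from the algebra property of $H^s$ for $s>n/2$ stated in Remark \ref{prodhs}, applied to $(\mathbf u^R-\mathbf u)\cdot\nabla \mathbf \theta^R$ and to $\mathbf u\cdot\nabla(\mathbf \theta^R-\mathbf\theta)$) produces the analogous $H^{s'-1}$ bounds.

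The only mildly delicate point is checking that the divergence-free hypothesis of Lemma \ref{div} is always met so that the $H^{s'-1}$ bilinear estimate is legitimate; in each of the five decompositions the ``first factor'' is one of $\mathbf u^R$, $\mathbf b^R$, $\mathbf u$, $\mathbf b$, or a difference of such, and all are divergence-free. Beyond that, the argument is entirely routine once Proposition \ref{fin} and the preceding convergence proposition are in hand.
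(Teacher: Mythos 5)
Your argument is correct and follows the same basic template as the paper: telescope the difference, control the two ``difference'' terms by the uniform boundedness of $\mathcal{S}_{R}$ on Sobolev spaces together with the bilinear estimate of Lemma \ref{div}, and then invoke the uniform $H^{s}$-bounds of Proposition \ref{fin} and the strong $H^{s'}$-convergence from the preceding proposition.

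There is one genuine difference worth recording, and it is in your favour. The paper's displayed estimate bounds $\|\mathcal{S}_{R}[(\mathbf b^{R}\cdot\nabla)\mathbf b^{R}] - (\mathbf b\cdot\nabla)\mathbf b\|_{H^{s'-1}}$ by only the two terms
$\|\mathcal{S}_{R}[((\mathbf b^{R}-\mathbf b)\cdot\nabla)\mathbf b^{R}]\|_{H^{s'-1}}$ and $\|\mathcal{S}_{R}[(\mathbf b\cdot\nabla)(\mathbf b^{R}-\mathbf b)]\|_{H^{s'-1}}$; but these two terms sum to $\mathcal{S}_{R}[(\mathbf b^{R}\cdot\nabla)\mathbf b^{R}] - \mathcal{S}_{R}[(\mathbf b\cdot\nabla)\mathbf b]$, so the remainder $(\mathcal{S}_{R}-I)[(\mathbf b\cdot\nabla)\mathbf b]$ is silently dropped. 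Your three-term decomposition restores exactly this missing piece and disposes of it via property (2) of the truncation operator with $k=s-s'>0$ and Lemma \ref{div}, using $(\mathbf u\cdot\nabla)\mathbf u\in H^{s-1}$; this is the right way to close the argument, and the extra regularity gap $s-s'>0$ is precisely what makes it work. Your checks of the divergence-free hypothesis in each of the five cases, and the reduction of the temperature term to the algebra property via $(\mathbf u\cdot\nabla)\theta=\nabla\cdot(\mathbf u\,\theta)$, are also all in order.
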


\begin{proof}
We prove the result for one of the non-linear terms, namely 

\noindent ${\mathcal S}_{\textit R} \left[ (\mathbf b^{\textit R}\cdot \nabla)\mathbf b^{\textit R} \right]$, when $s' > n/2 + 1$.  One can follow the similar steps to show the convergence of the other non-linear terms in the respective spaces as claimed. 

Using the properties of Fourier truncation operator and Remark \ref{prodhs} we have,
\begin{align*}
 &\sup_{t \in [0, T^{\ast}]} \|{\mathcal S}_{\textit R} \left[ (\mathbf b^{\textit R} \cdot \nabla)\mathbf b^{\textit R} \right] - (\mathbf b \cdot \nabla)\mathbf b\|_{\H^{s'-1}}
\\ &\leq \sup_{t \in [0,T^{\ast}]}\left( \|{\mathcal S}_{\textit R} \left[ (\mathbf b^{\textit R} - \mathbf b) \cdot \nabla)\mathbf b^{\textit R} \right] \|_{\H^{s'-1}} + \|{\mathcal S}_{\textit R} \left[ (\mathbf b \cdot \nabla)(\mathbf b^{\textit R} - \mathbf b) \right]\|_{\H^{s'-1}} \right)
\\ &\leq \sup_{t \in [0, T^{\ast}]}\left( C \|\left[ (\mathbf b^{\textit R} - \mathbf b) \cdot \nabla)\mathbf b^{\textit R} \right] \|_{\H^{s'-1}} + C \|\left[ (\mathbf b \cdot \nabla)(\mathbf b^{\textit R} - \mathbf b) \right]\|_{\H^{s'-1}} \right)
\\ &\leq \sup_{t \in [0, T^{\ast}]}\left( C \|\mathbf b^{\textit R} - \mathbf b \|_{\H^{s'}} \| \mathbf b^{\textit R}\|_{\H^{s'}} + C \| \mathbf b \|_{\H^{s'}} \| \mathbf b^{\textit R} - \mathbf b\|_{\H^{s'}}\right)
\end{align*}
Clearly from \eqref{urcu}, Proposition \ref{fin} and Proposition \ref{Ca}, the right hand side tends to 0 as $R \to \infty$. 
\end{proof}

Next we will show the convergence of time derivatives.
\begin{proposition}\label{delur}
For any $s' > n/2 +1$, $\frac{\partial \mathbf u^{\textit R}}{\partial t} \to \frac{\partial \mathbf u}{\partial t}$ and $\frac{\partial \mathbf b^{\textit R}}{\partial t} \to \frac{\partial \mathbf b}{\partial t}$ strongly in the space $L^{\infty}\left([0,T^{\ast}]; \H^{s'-1}(\mathbb R^n)\right)$ and $ \frac{\partial \mathbf \theta^{\textit R}}{\partial t}$ converges strongly to $\frac{\partial \mathbf \theta}{\partial t}$ in
 
\noindent $L^{\infty}\left([0, T^{\ast}]; H^{s'-1}(\mathbb R^n)\right)$ as $R \to \infty$.
\end{proposition}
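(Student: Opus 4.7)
The plan is to read off each time derivative from the evolution equations \eqref{tr1}--\eqref{tr3} and then invoke the convergence of the nonlinear terms proved in the preceding proposition. The only subtle point is the pressure $p^{\textit R}$ appearing in \eqref{tr1}; we shall handle it by projecting onto divergence-free fields.

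First, consider $\b^{\textit R}$ and $\theta^{\textit R}$, for which there is no pressure. From \eqref{tr2}--\eqref{tr3} together with the corresponding limit equations \eqref{mb2}--\eqref{mb3},
\begin{align*}
\frac{\partial \theta^{\textit R}}{\partial t}-\frac{\partial \theta}{\partial t}
&= -\bigl({\mathcal S}_{\textit R}[(\u^{\textit R}\cdot\nabla)\theta^{\textit R}]-(\u\cdot\nabla)\theta\bigr)+(\u^{\textit R}-\u)\cdot e_n,\\
\frac{\partial \b^{\textit R}}{\partial t}-\frac{\partial \b}{\partial t}
&= -\bigl({\mathcal S}_{\textit R}[(\u^{\textit R}\cdot\nabla)\b^{\textit R}]-(\u\cdot\nabla)\b\bigr)+\bigl({\mathcal S}_{\textit R}[(\b^{\textit R}\cdot\nabla)\u^{\textit R}]-(\b\cdot\nabla)\u\bigr).
\end{align*}
Taking $H^{s'-1}$- (resp.\ $\H^{s'-1}$-) norms, each bracketed nonlinear difference tends to $0$ in $L^{\infty}\bigl([0,T^{\ast}];H^{s'-1}\bigr)$ by the previous proposition, while $\u^{\textit R}-\u\to 0$ in $L^{\infty}\bigl([0,T^{\ast}];\H^{s'}\bigr)\subset L^{\infty}\bigl([0,T^{\ast}];\H^{s'-1}\bigr)$.

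For $\u^{\textit R}$ the pressure must be eliminated. Since $\nabla\cdot\u^{\textit R}=0$ and $\nabla\cdot\u=0$, the time derivatives are themselves divergence-free, so applying the Leray projector $\mathbb{P}$ (which is bounded on every $H^{\sigma}$) to \eqref{tr1} and to \eqref{mb1} gives
\begin{align*}
\frac{\partial \u^{\textit R}}{\partial t}-\frac{\partial \u}{\partial t}
= \mathbb{P}\Bigl[&-\bigl({\mathcal S}_{\textit R}[(\u^{\textit R}\cdot\nabla)\u^{\textit R}]-(\u\cdot\nabla)\u\bigr)\\
&+(\theta^{\textit R}-\theta)\,e_n+\bigl({\mathcal S}_{\textit R}[(\b^{\textit R}\cdot\nabla)\b^{\textit R}]-(\b\cdot\nabla)\b\bigr)\Bigr],
\end{align*}
and the boundedness of $\mathbb{P}$ on $\H^{s'-1}$ together with the previous proposition again reduces the estimate to quantities already controlled.

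The main (and only) technical point is thus the treatment of $\nabla p^{\textit R}$: once we observe that both $\partial_t\u^{\textit R}$ and $\partial_t\u$ are divergence-free and hence fixed by $\mathbb{P}$, the pressure simply drops out of the difference, and the whole argument collapses to a direct application of the earlier convergence results. No new estimates are required beyond Proposition \ref{Ca}, the subsequent $H^{s'}$ convergence, and the convergence of the nonlinear terms in $H^{s'-1}$.
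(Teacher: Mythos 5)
Your argument is correct and reaches the conclusion by a more direct route than the paper. The paper first bounds $\|\partial_t\mathbf u^{\textit R}\|_{\H^{s'-1}}$, $\|\partial_t\mathbf\theta^{\textit R}\|_{H^{s'-1}}$ and $\|\partial_t\mathbf b^{\textit R}\|_{\H^{s'-1}}$ uniformly in $R$ using Proposition \ref{fin}, extracts a weak-$*$ convergent subsequence via Banach--Alaoglu, and only afterwards remarks that the strong convergence of every term on the right-hand side of the truncated equations upgrades the convergence to strong. You bypass the uniform-bound/weak-$*$ detour entirely and go straight to strong convergence by subtracting the equations, which is all that is needed once the preceding proposition on the nonlinear terms is in hand. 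A genuine point in your favour is the pressure: you eliminate $\nabla p^{\textit R}$ explicitly by applying the Leray projector and observing that $\partial_t\mathbf u^{\textit R}$ and $\partial_t\mathbf u$ are divergence-free, whereas the paper's estimate for $\|\partial_t\mathbf u^{\textit R}\|_{\H^{s'-1}}$ silently drops the pressure term without comment. One small caution: as written you subtract the limit equations \eqref{mb1}--\eqref{mb3}, which presupposes that the limit $(\mathbf u,\mathbf\theta,\mathbf b)$ already satisfies them, and at this point in the paper that has not been formally established. The clean phrasing is that the right-hand side of each truncated equation converges strongly in $H^{s'-1}$ (by the previous proposition together with $\mathbf u^{\textit R}\to\mathbf u$ and $\mathbf\theta^{\textit R}\to\mathbf\theta$ in $H^{s'}$), hence $\partial_t\mathbf u^{\textit R}$ etc.\ converge strongly in $H^{s'-1}$ to some limit; since $\mathbf u^{\textit R}\to\mathbf u$ in the sense of distributions, that limit must be $\partial_t\mathbf u$, and one obtains in passing that the limit triple satisfies the equations. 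The same identification step is glossed in the paper's Banach--Alaoglu argument, so this is a shared, easily repaired imprecision rather than a gap in your proof.
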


\begin{proof}
Taking $H^{s'-1}$-norm on both sides of  \eqref{tr1}-\eqref{tr3} and using properties of Fourier truncation operator, Remark \ref{prodhs} and Remark \ref{div}, we get for $s' > n/2+1$,
\begin{align*}
\left\| \frac{\partial \mathbf u^{\textit R}}{\partial t} \right\|_{\H^{s'-1}} &\leq \| \theta^{\textit R} e_n\|_{H^{s'-1}} + \|{\mathcal S}_{\textit R} \left[ (\mathbf b^{\textit R} \cdot \nabla)\mathbf b^{\textit R} \right]\|_{\H^{s'-1}} 
\\ &\quad \quad \quad \quad+ \|{\mathcal S}_{\textit R} \left[ (\mathbf u^{\textit R} \cdot \nabla)\mathbf u^{\textit R} \right]\|_{\H^{s'-1}}
\\ &\leq C \left( \| \theta^{\textit R}\|_{H^{s'}} + \|\mathbf b^{\textit R} \|^{2}_{\H^{s'}} + \|\mathbf u^{\textit R} \|^{2}_{\H^{s'}} \right)
\end{align*}
and 
\begin{align*}
\left\| \frac{\partial \mathbf \theta^{\textit R}}{\partial t} \right\|_{\H^{s'-1}} &\leq \| \mathbf u^{\textit R} \cdot e_n\|_{\H^{s'-1}} + \|{\mathcal S}_{\textit R} \left[ (\mathbf u^{\textit R} \cdot \nabla)\mathbf \theta^{\textit R} \right]\|_{\H^{s'-1}} 
\\ &\leq C \left( \|\mathbf u^{\textit R} \|_{\H^{s'}} + \|\mathbf u^{\textit R} \|_{\H^{s'}} \|\mathbf \theta^{\textit R} \|_{H^{s'}} \right)
\end{align*}
and
\begin{align*}
\left\| \frac{\partial \mathbf b^{\textit R}}{\partial t} \right\|_{\H^{s'-1}} &\leq   \|{\mathcal S}_{\textit R} \left[ (\mathbf b^{\textit R} \cdot \nabla)\mathbf u^{\textit R} \right]\|_{\H^{s'-1}} + \|{\mathcal S}_{\textit R} \left[ (\mathbf u^{\textit R} \cdot \nabla)\mathbf b^{\textit R} \right]\|_{\H^{s'-1}} 
\\ &\leq C \|\mathbf b^{\textit R} \|_{\H^{s'}} \|\mathbf u^{\textit R} \|_{\H^{s'}} 
\end{align*}

After adding
\begin{align}\label{dutb}
&\left\| \frac{\partial \mathbf u^{\textit R}}{\partial t} \right\|_{\H^{s'-1}} + \left\| \frac{\partial \mathbf \theta^{\textit R}}{\partial t} \right\|_{\H^{s'-1}} + \left\| \frac{\partial \mathbf b^{\textit R}}{\partial t} \right\|_{\H^{s'-1}} \nonumber
\\ &\leq C( \|\mathbf u^{\textit R} \|_{\H^{s'}} + \| \theta^{\textit R}\|_{H^{s'}} + \|\mathbf u^{\textit R} \|^{2}_{\H^{s'}} + \|\mathbf b^{\textit R} \|^{2}_{\H^{s'}} + \|\mathbf u^{\textit R} \|_{\H^{s'}} \|\mathbf \theta^{\textit R} \|_{H^{s'}} \nonumber
\\ &\quad\quad\quad\quad\quad\quad\quad\quad\quad\quad\quad\quad\quad+ \|\mathbf b^{\textit R} \|_{\H^{s'}} \|\mathbf u^{\textit R} \|_{\H^{s'}} )
\end{align}

Now taking supremum in both side over $t \in [0, T^{\ast}]$, then using Proposition \ref{fin} and dropping the first two terms of left hand side we obtain
\[\sup_{t \in [0, T^{\ast}]} \left\| \frac{\partial \mathbf b^{\textit R}}{\partial t} \right\|_{\H^{s'-1}} \leq C(T^{\ast}) < \infty.\]
Using Banach-Alaoglu Theorem (see Robinson \cite{Ro}, Yosida \cite{Yo}) we can extract a subsequence  $R_m \to +\infty$ such that
\begin{align}\label{drm}
\frac{\partial \mathbf b^{\textit R_m}}{\partial t} \stackrel{*}{\rightharpoonup} \frac{\partial \mathbf b}{\partial t} \quad in \quad L^{\infty}\left([0, T^{\ast}]; \H^{s'-1}(\mathbb R^n)\right).
\end{align}

Similar argument works for $\frac{\partial \mathbf u^{\textit R}}{\partial t}$ and $\frac{\partial \mathbf \theta^{\textit R}}{\partial t}$ as well.

\noindent Note that $\|\mathbf u^{\textit R}\|_{\H^s}\|\mathbf \theta^{\textit R}\|_{H^s} \to \|\mathbf u\|_{\H^s}\|\mathbf \theta\|_{H^s}$ and $\|\mathbf b^{\textit R} \|_{\H^{s'}} \|\mathbf u^{\textit R} \|_{\H^{s'}} \to$ 

\noindent $\|\mathbf b\|_{\H^{s'}} \|\mathbf u\|_{\H^{s'}}$ holds due to the strong convergences of $(\mathbf u^{\textit R}, \mathbf \theta^{\textit R}, \mathbf b^{\textit R})$ to $(\mathbf u, \mathbf \theta, \mathbf b)$ in 

\noindent $L^{\infty}\left([0,  T^{\ast}]; \H^{s'}(\mathbb R^n)\right) \times L^{\infty}\left([0,  T^{\ast}]; H^{s'}(\mathbb R^n)\right) \times L^{\infty}\left([0,  T^{\ast}]; \H^{s'}(\mathbb R^n)\right)$. Hence all the terms on the right hand side of \eqref{dutb} converge strongly (from Proposition \ref{Ca}), 
we observe that the convergence of the time derivatives are strong.
\end{proof}

\begin{proposition}
For $s > n/2 +1,$ $(\mathbf u, \mathbf \theta, \mathbf b)$ lie in the space $L^{\infty}\left([0, T^{\ast}]; \H^{s}(\mathbb R^n)\right) \times L^{\infty}\left([0, T^{\ast}]; H^{s}(\mathbb R^n)\right) \times L^{\infty}\left([0, T^{\ast}]; \H^{s}(\mathbb R^n)\right).$
\end{proposition}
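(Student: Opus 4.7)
My plan is to lift the uniform $H^s$ bound for the approximate sequence $(\mathbf{u}^R, \theta^R, \mathbf{b}^R)$ obtained in Proposition \ref{fin} to the limit $(\mathbf{u}, \theta, \mathbf{b})$ via a weak-$*$ compactness argument, and then identify the weak-$*$ limit with the strong limit already produced in the preceding propositions. Concretely, Proposition \ref{fin} yields a constant $M$, independent of $R$, with
\[
\sup_{t \in [0, T^{\ast}]} \left( \|\mathbf{u}^R(t)\|_{\H^s} + \|\theta^R(t)\|_{H^s} + \|\mathbf{b}^R(t)\|_{\H^s} \right) \leq M.
\]
This means the family is uniformly bounded in $L^{\infty}([0,T^{\ast}];\H^s)$, $L^{\infty}([0,T^{\ast}];H^s)$ and $L^{\infty}([0,T^{\ast}];\H^s)$ respectively.

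Next I would invoke the Banach–Alaoglu theorem (as was already used in Proposition \ref{delur}) together with the duality $L^{\infty}([0,T^{\ast}];X^s) \cong \left( L^{1}([0,T^{\ast}];X^{-s}) \right)^{\ast}$, where $X^s$ is $\H^s$ or $H^s$. This allows me to extract a subsequence $R_m \to \infty$ such that
\[
\mathbf{u}^{R_m} \stackrel{*}{\rightharpoonup} \tilde{\mathbf{u}}, \qquad \theta^{R_m} \stackrel{*}{\rightharpoonup} \tilde{\theta}, \qquad \mathbf{b}^{R_m} \stackrel{*}{\rightharpoonup} \tilde{\mathbf{b}},
\]
with the limits lying in the desired $L^{\infty}$–Sobolev spaces and satisfying, by weak-$*$ lower semicontinuity of the norm,
\[
\|\tilde{\mathbf{u}}\|_{L^{\infty}([0,T^{\ast}];\H^s)} + \|\tilde{\theta}\|_{L^{\infty}([0,T^{\ast}];H^s)} + \|\tilde{\mathbf{b}}\|_{L^{\infty}([0,T^{\ast}];\H^s)} \leq M.
\]

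The main task is then to identify $(\tilde{\mathbf{u}}, \tilde{\theta}, \tilde{\mathbf{b}})$ with $(\mathbf{u}, \theta, \mathbf{b})$. This is where I would use the strong convergence already established: by Proposition \ref{Ca} (and its $H^{s'}$ upgrade via Sobolev interpolation in the subsequent proposition), $(\mathbf{u}^R, \theta^R, \mathbf{b}^R) \to (\mathbf{u}, \theta, \mathbf{b})$ strongly in $L^{\infty}([0,T^{\ast}];\L^2) \times L^{\infty}([0,T^{\ast}];L^2) \times L^{\infty}([0,T^{\ast}];\L^2)$. Testing the weak-$*$ limits against any $\varphi \in C_c^{\infty}((0,T^{\ast}) \times \mathbb{R}^n)$ (which lies in the predual) and using uniqueness of limits in the distribution sense, one concludes $\tilde{\mathbf{u}} = \mathbf{u}$, $\tilde{\theta} = \theta$, $\tilde{\mathbf{b}} = \mathbf{b}$ almost everywhere. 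Consequently $(\mathbf{u}, \theta, \mathbf{b})$ belongs to the claimed space with the same uniform bound $M$.

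The only subtle point—and the one I would take most care over—is the reconciliation of the two notions of convergence: the weak-$*$ limit $\tilde{\mathbf{u}}$ lives a priori only in $L^{\infty}([0,T^{\ast}];\H^s)$, whereas the strong limit $\mathbf{u}$ is constructed in a weaker topology. The identification works because both convergences may be interpreted in $\mathcal{D}'((0,T^{\ast}) \times \mathbb{R}^n)$, where strong $L^2$ convergence trivially implies distributional convergence, and distributional limits are unique. Everything else—bounds, subsequence extraction, lower semicontinuity—is a routine application of Banach–Alaoglu once the uniform bound from Proposition \ref{fin} is in hand.
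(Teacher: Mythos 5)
Your proposal is correct and follows essentially the same route as the paper: the paper's own proof consists precisely of applying the Banach--Alaoglu theorem to the uniform bounds of Proposition \ref{fin} to extract weak-$*$ convergent subsequences whose limits lie in the stated $L^{\infty}$--Sobolev spaces. The only difference is that you explicitly carry out the identification of the weak-$*$ limit with the strong $L^2$ limit from Proposition \ref{Ca} via uniqueness of distributional limits, a step the paper leaves implicit; this is a welcome clarification rather than a departure.
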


\begin{proof}
By Banach-Alaoglu Theorem, the uniform bounds in Proposition $\ref{fin}$ guarantee the existence of a subsequence such that
\begin{align}
&\mathbf u^{\textit R_m} \stackrel{*}{\rightharpoonup} \mathbf {u}\quad in \quad L^{\infty}\left([0, T^{\ast}]; \H^{s}(\mathbb R^n)\right)
\\ &\mathbf \theta^{\textit R_m} \stackrel{*}{\rightharpoonup} \mathbf \theta\quad in \quad L^{\infty}\left([0, T^{\ast}]; H^{s}(\mathbb R^n)\right)
\end{align}
and
\begin{align}
\mathbf b^{\textit R_m} \stackrel{*}{\rightharpoonup} \mathbf b \quad in \quad L^{\infty}\left([0, T^{\ast}]; \H^{s}(\mathbb R^n)\right),
\end{align}

which guarantees that the limit satisfies
\begin{align}\label{ul2}
 \mathbf u \in  L^{\infty}\left([0,T^{\ast}]; \H^{s}(\mathbb R^n)\right),\quad \mathbf \theta \in L^{\infty}\left([0, T^{\ast}]; H^{s}(\mathbb R^n)\right)
\end{align}
and
\begin{align}
\mathbf b \in L^{\infty}\left([0, T^{\ast}]; \H^{s}(\mathbb R^n)\right)
\end{align}

\end{proof}

\begin{proposition}
Let $(\mathbf u_{0}, \mathbf \theta_{0}, \mathbf b_{0}) \in  \H^{s}(\mathbb R^n) \times H^{s}(\mathbb R^n) \times \H^{s}(\mathbb R^n)$ for $s > n/2 +1.$ Let the solutions $(\u, \theta, \b)$ of the  ideal magnetic B\'{e}nard problem \eqref{mb1}-\eqref{mb4} have the regularity
\[ \mathbf u \in  L^{\infty}\left([0, T^{\ast}]; \H^{s}(\mathbb R^n)\right), \mathbf \theta \in L^{\infty}\left([0, T^{\ast}]; H^{s}(\mathbb R^n)\right), \mathbf b \in  L^{\infty}\left([0, T^{\ast}]; \H^{s}(\mathbb R^n)\right). \]
Then the solutions $(\mathbf u, \mathbf \theta, \mathbf b)$ are unique in $[0, T^{\ast}].$  
\end{proposition}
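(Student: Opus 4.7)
My plan is the standard energy method for uniqueness of quasilinear hyperbolic systems. Suppose $(\mathbf{u}_1, \theta_1, \mathbf{b}_1)$ and $(\mathbf{u}_2, \theta_2, \mathbf{b}_2)$ are two solutions of \eqref{mb1}--\eqref{mb5} on $[0,T^{\ast}]$ corresponding to the same initial data, both in the regularity class stated. I would set
\[
\mathbf{U} := \mathbf{u}_1-\mathbf{u}_2,\quad \Theta := \theta_1 - \theta_2,\quad \mathbf{B} := \mathbf{b}_1-\mathbf{b}_2,\quad P := p^{\ast}_1 - p^{\ast}_2,
\]
and subtract the equations pairwise to obtain
\begin{align*}
\partial_t \mathbf{U} + (\mathbf{u}_2 \cdot \nabla)\mathbf{U} + (\mathbf{U}\cdot\nabla)\mathbf{u}_1 + \nabla P &= \Theta\, e_n + (\mathbf{b}_2\cdot\nabla)\mathbf{B} + (\mathbf{B}\cdot\nabla)\mathbf{b}_1, \\
\partial_t \Theta + (\mathbf{u}_2\cdot\nabla)\Theta + (\mathbf{U}\cdot\nabla)\theta_1 &= \mathbf{U}\cdot e_n, \\
\partial_t \mathbf{B} + (\mathbf{u}_2\cdot\nabla)\mathbf{B} + (\mathbf{U}\cdot\nabla)\mathbf{b}_1 &= (\mathbf{b}_2\cdot\nabla)\mathbf{U} + (\mathbf{B}\cdot\nabla)\mathbf{u}_1.
\end{align*}

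Next I would form the $L^2$ inner product of the three equations with $\mathbf{U}$, $\Theta$, $\mathbf{B}$ respectively, and sum. The transport terms $((\mathbf{u}_2\cdot\nabla)\mathbf{U},\mathbf{U})$, $((\mathbf{u}_2\cdot\nabla)\Theta,\Theta)$, $((\mathbf{u}_2\cdot\nabla)\mathbf{B},\mathbf{B})$ vanish by $\nabla\cdot\mathbf{u}_2=0$, and $(\nabla P,\mathbf{U})=0$ by $\nabla\cdot\mathbf{U}=0$. The crucial cancellation from the MHD coupling is
\[
\bigl((\mathbf{b}_2\cdot\nabla)\mathbf{B},\mathbf{U}\bigr)_{L^2} + \bigl((\mathbf{b}_2\cdot\nabla)\mathbf{U},\mathbf{B}\bigr)_{L^2} = 0,
\]
which follows from integration by parts using $\nabla\cdot\mathbf{b}_2=0$. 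What remains are the six bilinear terms of the form $((\mathbf{U}\cdot\nabla)\mathbf{u}_1,\mathbf{U})$, $((\mathbf{U}\cdot\nabla)\theta_1,\Theta)$, $((\mathbf{U}\cdot\nabla)\mathbf{b}_1,\mathbf{B})$, $((\mathbf{B}\cdot\nabla)\mathbf{b}_1,\mathbf{U})$, $((\mathbf{B}\cdot\nabla)\mathbf{u}_1,\mathbf{B})$, plus the linear forcings $(\Theta e_n,\mathbf{U})$ and $(\mathbf{U}\cdot e_n,\Theta)$.

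Each bilinear term is controlled by H\"older's inequality together with the embedding $H^s\hookrightarrow W^{1,\infty}$ valid for $s>n/2+1$ (Remark \ref{prodhs} applied to $\nabla\mathbf{u}_1$, $\nabla\theta_1$, $\nabla\mathbf{b}_1$), yielding estimates of the form
\[
\bigl|((\mathbf{U}\cdot\nabla)\mathbf{u}_1,\mathbf{U})\bigr| \leq \|\nabla\mathbf{u}_1\|_{L^\infty}\|\mathbf{U}\|_{L^2}^2 \leq C\|\mathbf{u}_1\|_{H^s}\|\mathbf{U}\|_{L^2}^2,
\]
and similarly for the mixed ones, while the linear forcings are absorbed by Cauchy--Schwarz and Young's inequality. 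Setting $Z(t) := \|\mathbf{U}(t)\|_{L^2}^2 + \|\Theta(t)\|_{L^2}^2 + \|\mathbf{B}(t)\|_{L^2}^2$ and collecting, I obtain
\[
\frac{d}{dt}Z(t) \leq K(t)\, Z(t),\qquad K(t) := C\bigl(1 + \|\mathbf{u}_1\|_{H^s} + \|\theta_1\|_{H^s} + \|\mathbf{b}_1\|_{H^s}\bigr),
\]
where $K\in L^\infty(0,T^\ast)$ by the assumed regularity. Since $Z(0)=0$, Gronwall's lemma forces $Z\equiv 0$ on $[0,T^\ast]$, which yields uniqueness.

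The main obstacle is purely bookkeeping: ensuring the cancellation of the $\mathbf{b}_2$--transport cross term between the velocity and magnetic equations and verifying that every non-canceling term produces a factor of $Z(t)$ rather than a higher-order quantity. The $L^2$ framework works precisely because the differences lose one derivative relative to $(\mathbf{u}_i,\theta_i,\mathbf{b}_i)$, and the $H^s\hookrightarrow W^{1,\infty}$ embedding (rather than a commutator estimate) is strong enough to close the estimate at the $L^2$ level.
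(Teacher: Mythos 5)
Your proof is correct, but it takes a genuinely different route from the paper's. The paper does not run a direct energy argument on two arbitrary solutions: it instead recycles the Cauchy estimate \eqref{ytr} from Proposition \ref{Ca} for the Fourier-truncated approximations $(\mathbf u^{\textit R},\theta^{\textit R},\mathbf b^{\textit R})$ and $(\mathbf u^{\textit R'},\theta^{\textit R'},\mathbf b^{\textit R'})$, and concludes uniqueness by letting $R, R'\to\infty$. Strictly speaking that only shows the limit produced by the truncation scheme is unique; it does not compare two arbitrary solutions in the stated regularity class. Your argument --- subtract the two equations, exploit the vanishing of the transport and pressure terms, cancel the $\mathbf b_2$-transport cross terms between the velocity and magnetic equations by integration by parts with $\nabla\cdot\mathbf b_2=0$, bound the remaining bilinear terms via $H^{s}\hookrightarrow W^{1,\infty}$ for $s>n/2+1$, and close with Gronwall from $Z(0)=0$ --- is the standard weak--strong-type $L^{2}$ energy method and actually proves the proposition as literally stated, at the cost of a slightly longer computation. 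Two trivial remarks: you announce ``six'' bilinear terms but list five (which is the correct count, since the sixth cross term is the one that cancels), and to justify $(\partial_t\mathbf U,\mathbf U)_{L^2}=\tfrac12\tfrac{d}{dt}\|\mathbf U\|_{L^2}^2$ and the integration by parts against $\nabla P$ one should note that $\partial_t\mathbf U\in L^{\infty}([0,T^{\ast}];\H^{s-1})$ and $\nabla P\in L^2$ follow from the equations themselves; both points are routine and do not affect the argument.
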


\begin{proof}
The proof of the uniqueness is very similar to the proof of  Proposition \ref{Ca}. 
Let $(\mathbf u^{\textit R}, \mathbf \theta^{\textit R},  \mathbf b^{\textit R})$ and $(\mathbf u^{\textit R'}, \mathbf \theta^{\textit R'},  \mathbf b^{\textit R'})$ be two solutions of the truncated ideal magnetic B\'{e}nard problem \eqref{tr1}-\eqref{tr3} for $R' > R$. Then from \eqref{ytr}, we have,
\[\sup_{t \in [0, T^{\ast}]}\left(\| \mathbf u^{\textit R}-\mathbf u^{\textit R'}\|_{\L^2} + \| \mathbf \theta^{\textit R} - \mathbf \theta^{\textit R'}\|_{L^2} + \| \mathbf b^{\textit R}-\mathbf b^{\textit R'}\|_{\L^2}\right) \leq \frac{C}{R^\epsilon}.\]
Now letting $R \to R'$ then letting $R \to \infty$ we observe,
\[ \mathbf u^{\textit R} \to \mathbf u^{\textit R'}, \quad \mathbf \theta^{\textit R} \to \mathbf \theta^{\textit R'} \quad and\quad \mathbf b^{\textit R} \to \mathbf b^{\textit R'} .\]
Thus we have the uniqueness of the limits $(\u, \theta, \b)$.
\end{proof}

We finally prove that the solutions $(\u, \theta, \b)$ are continuous in time.
\begin{theorem}\label{mt1}
Let $s > \frac{n}{2} +1,$ $\mathbf u_{0} \in \H^s(\mathbb R^{n})$, $\mathbf \theta_{0} \in  H^s(\mathbb R^{n})$ and $\mathbf b_{0}  \in \H^s(\mathbb R^{n})$. Then there exists a unique strong solution $(\mathbf u, \mathbf \theta, \mathbf b) \in C([0, T^{\ast}]; \H^{s}(\mathbb R^{n})) \times C([0, T^{\ast}]; H^{s}(\mathbb R^{n})) $

\noindent $\times C([0, T^{\ast}]; \H^{s}(\mathbb R^{n}))$ to the system \eqref{mb1}-\eqref{mb4}.
\end{theorem}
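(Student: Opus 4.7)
By the preceding propositions, existence of a unique limit $(\u,\theta,\b)$ solving \eqref{mb1}--\eqref{mb4} has been established, with $(\u,\theta,\b) \in L^{\infty}([0,T^{\ast}]; \H^s(\mathbb R^n))\times L^{\infty}([0,T^{\ast}]; H^s(\mathbb R^n))\times L^{\infty}([0,T^{\ast}]; \H^s(\mathbb R^n))$ and their time derivatives lying in the analogous $L^{\infty}_t H^{s-1}$ spaces by Proposition \ref{delur}. The only remaining task is to upgrade the $L^{\infty}$ time-regularity to $C([0,T^{\ast}];\cdot)$.

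I plan to carry this out in three stages. First, the bounds on $\partial_t\u,\partial_t\theta,\partial_t\b$ in $L^{\infty}([0,T^{\ast}]; H^{s-1})$ show that each component is Lipschitz in $t$ valued in $H^{s-1}$, hence lies in $C([0,T^{\ast}]; H^{s-1})$. Second, combining this with the uniform $H^s$ bound via a standard density argument (Strauss's lemma on weak continuity: approximate any $\phi\in H^{-s}$ in the dual norm by $\phi_n\in H^{-(s-1)}$, for which $t\mapsto\langle\u(t),\phi_n\rangle$ is already continuous, and use the uniform $H^s$ bound to control the remainder) upgrades the continuity to the weak topology of $H^s$, giving $\u,\theta,\b \in C_w([0,T^{\ast}]; H^s)$.

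The crucial third stage is strong continuity. Since $H^s$ is a Hilbert space, weak convergence together with convergence of norms implies strong convergence, so it suffices to show that
\begin{align*}
X(t):=\|\u(t)\|_{\H^s}^2+\|\theta(t)\|_{H^s}^2+\|\b(t)\|_{\H^s}^2
\end{align*}
is a continuous function of $t$. The weak continuity already yields lower semicontinuity $X(t_0)\leq \liminf_{t\to t_0}X(t)$ via Fatou on the Fourier side. For the reverse, I rerun the $H^s$ energy computation of Proposition \ref{fin} on the limit solution: applying $J^s$ to \eqref{mb1}--\eqref{mb3}, pairing in $L^2$ with $J^s\u$, $J^s\theta$, $J^s\b$, and invoking the Kato--Ponce commutator estimate of Lemma \ref{kpe} gives the energy identity
\begin{align*}
X(t) = X(t_0) + \int_{t_0}^{t} G(\tau)\,d\tau
\end{align*}
for some $G\in L^1(0,T^{\ast})$ built out of the commutator terms and the buoyancy couplings; hence $X\in C([0,T^{\ast}])$. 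Combining with Stage 2 yields strong continuity of $(\u,\theta,\b)$ into the respective $H^s$ spaces on $[0,T^{\ast}]$.

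The main technical obstacle is justifying the identity $\tfrac{d}{dt}\|J^s\u\|_{L^2}^2 = 2(J^s\partial_t\u,J^s\u)_{L^2}$ for the limit, which \emph{a priori} only has the regularity $L^{\infty}_t H^s \cap C_t H^{s-1}$. I would handle this by first writing the energy identity for the truncated solutions $(\u^R,\theta^R,\b^R)$, which are smooth in time by the Picard construction underlying Proposition \ref{cut}, and then passing to the limit as $R\to\infty$ using the strong $H^{s'}$-convergence for $s'<s$ together with the uniform $H^s$ bound from Proposition \ref{fin} to control the nonlinear and buoyancy terms.
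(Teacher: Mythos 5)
Your architecture is a genuinely different route from the paper's. The paper proves continuity directly: it writes $\|\mathbf u(t_2)-\mathbf u(t_1)\|_{\H^s}$ via the Littlewood--Paley characterization $\|\cdot\|_{H^s}\approx\|\cdot\|_{B^s_{2,2}}$, absorbs the frequencies $j\geq N$ into an $\epsilon/2$ tail using the uniform $H^s$ bound, and for $j<N$ integrates the equation in time, trading a factor $2^{2N}$ against $|t_2-t_1|$; no energy identity for the limit is ever invoked. Your route --- Lipschitz continuity into $H^{s-1}$ from the time-derivative bound, weak continuity in $H^s$ by the Strauss density argument, then the weak-to-strong upgrade in a Hilbert space via continuity of the norm --- is the classical alternative, and Stages 1 and 2 are correct as sketched (for Stage 1 it is cleaner to read $\partial_t\mathbf u\in L^{\infty}([0,T^{\ast}];\H^{s-1})$ directly off the limit equation using Lemma \ref{div}, since Proposition \ref{delur} is stated in $H^{s'-1}$).

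The gap is in Stage 3, precisely at the point you flag as the main technical obstacle, and your proposed resolution does not close it. Passing to the limit in the truncated energy identity $X^R(t)=X^R(t_0)+\int_{t_0}^tG^R(\tau)\,d\tau$ needs two facts that do not follow from strong convergence in $H^{s'}$, $s'<s$, plus a uniform $H^s$ bound. First, $X^R(t)\to X(t)$ pointwise: those convergences only give $\mathbf u^R(t)\rightharpoonup\mathbf u(t)$ weakly in $H^s$ for each $t$, hence $X(t)\leq\liminf_RX^R(t)$; convergence of the $H^s$ norms is exactly the strong $H^s$ convergence you are trying to establish, so the argument is circular except at $t_0=0$, where $\mathcal S_R\mathbf u_0\to\mathbf u_0$ strongly in $\H^s$. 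Second, $G^R\to G$ in $L^1(0,T^{\ast})$: the commutator terms such as $([J^s,\mathbf u^R]\nabla\mathbf u^R,J^s\mathbf u^R)_{L^2}$ consume all $s$ derivatives of $\mathbf u^R$ (the Kato--Ponce bound of Lemma \ref{kpe} is in terms of $\|\mathbf u^R\|_{\H^s}\|\nabla\mathbf u^R\|_{L^{\infty}}$), so $H^{s'}$-convergence does not give strong $L^2$ convergence of $[J^s,\mathbf u^R]\nabla\mathbf u^R$, and a pairing of two merely weakly convergent factors need not converge to the pairing of the limits. To close the argument you would need one of the standard repairs: (i) prove right-continuity of $X$ at $t=0$ only (this \emph{does} work, since $X(t)\leq\liminf_RX^R(t)\leq X(0)+Ct$ by the differential inequality of Proposition \ref{fin} and $X^R(0)\to X(0)$, while lower semicontinuity gives the reverse bound), then propagate to arbitrary $t_0$ by uniqueness, re-solving from the data $\mathbf u(t_0)\in\H^s$, with a time-reversal or separate argument for left-continuity; (ii) a Bona--Smith argument showing $(\mathbf u^R)$ is Cauchy in $C([0,T^{\ast}];\H^s)$ itself; or (iii) derive the energy identity for the limit directly by Friedrichs mollification in $x$ and a DiPerna--Lions type commutator lemma, using $\nabla\mathbf u\in L^1(0,T^{\ast};L^{\infty})$. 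As written, the decisive limit passage is asserted rather than proved.
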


\begin{proof}

We shall prove $\mathbf u \in  C \left([0, T^{\ast}]; \H^{s}(\mathbb R^n)\right)$.  Proofs for $\mathbf \theta $ and $\mathbf b$ will follow in the similar manner. 
Let us first recall that for $s \in \mathbb R$, $1 \leq p, q < \infty,$ the inhomogeneous Besov space $B^{s}_{p, q}$ is defined as the space of all tempered distributions $f \in S'(\mathbb R^{n})$ such that
\[B^{s}_{p, q} = \left\{ f \in S'(\mathbb R^{n}) : \|f\|_{B^{s}_{p, q}} < \infty \right\},\]
where
\[ \|f\|_{B^{s}_{p, q}} = \left( \sum_{j \geq -1} 2^{jqs} \|\Delta_{j} f\|^{q}_{L^{p}}\right)^{\frac{1}{q}},\]
where $\Delta_{j}$ are the inhomogeneous Littlewood-Paley operators. We note that $\|f\|_{B^{s}_{2, 2}} \approx  \|f\|_{H^s}.$ For more details see Chapter 3 of \cite{LR}.

We consider $t_1, t_2 \in [0,  T^{\ast}]$ such that $0 \leq t_1 < t_2 \leq  T^{\ast}$. Then,
\[ \|\mathbf u(t_2) - \mathbf u(t_1)\|_{\H^s} \approx \|\mathbf u(t_2) - \mathbf u(t_1)\|_{B^{s}_{2, 2}} = \left\{\sum_{j \in \mathbb Z} \left( 2^{js}\left\| \Delta_j \mathbf u(t_2) - \Delta_j \mathbf u(t_1)\right\|_{\L^2}\right)^2\right\}^{\frac{1}{2}}.\]

Let $\epsilon > 0$ be arbitrarily small. As $\mathbf u \in  L^{\infty} \left([0,T^{\ast}]; H^{s}(\mathbb R^n)\right)$, there exists an integer $N > 0$ such that
\begin{align}\label{tle}
\left\{\sum_{j \geq N} \left(2^{js}\left\| \Delta_j \mathbf u(t_2) - \Delta_j \mathbf u(t_1)\right\|_{\L^2}\right)^2\right\}^{1/2} < \frac{\epsilon}{2}.
\end{align}

But we have
\begin{align*}
&\left\{\sum_{j \in \mathbb Z} \left( 2^{js}\left\| \Delta_j \mathbf u(t_2) - \Delta_j \mathbf u(t_1)\right\|_{\L^2}\right)^2\right\}^{1/2}
\\ &= \left\{ \left(\sum_{j < N} + \sum_{j \geq N}\right) \left( 2^{js}\left\| \Delta_j \mathbf u(t_2) - \Delta_j \mathbf u(t_1)\right\|_{\L^2}\right)^2\right\}^{1/2}.
\end{align*}

Now for $0 \leq t_1 < t_2 \leq T^{\ast}$ we have,
\begin{align*}
\Delta_j \mathbf u(t_2) - \Delta_j \mathbf u(t_1) &= \int_{t_1}^{t_2} \frac{\partial}{\partial \tau} \Delta_j \mathbf u(\tau) \, d\tau 
\\ &= \int_{t_1}^{t_2} \Delta_j \mathcal{P} \left[ (\mathbf b \cdot \nabla) \mathbf b + \theta e_n - (\mathbf u \cdot \nabla) \mathbf u\right](\tau) \, d\tau.
\end{align*}

So we get,
\begin{align}\label{lihs}
\sum_{j < N} &2^{2js}\left.\| \Delta_j \mathbf u(t_2) - \Delta_j \mathbf u(t_1)\|^2_{\L^2}\right. \nonumber
\\ &= \sum_{j < N} 2^{2js}\left.\left\| \int_{t_1}^{t_2} \Delta_j\mathcal{P} \left[ (\mathbf b \cdot \nabla) \mathbf b + \theta e_n - (\mathbf u \cdot \nabla) \mathbf u\right](\tau) \, d\tau \right\|^2_{L^2}\right.\nonumber
\\ &\leq \sum_{j < N} 2^{2js}\left( \int_{t_1}^{t_2} \left[ \| \Delta_j (\mathbf b \cdot \nabla \mathbf b)\|_{\L^2} + \| \Delta_j \theta\|_{L^2} + \| \Delta_j (\mathbf u \cdot \nabla \mathbf u)\|_{\L^2}\right]  \, d\tau \right)^2\nonumber
\\ &= \sum_{j < N} 2^{2j} \Big( \int_{t_1}^{t_2} 2^{j(s-1)}\Big[ \| \Delta_j (\mathbf b \cdot \nabla \mathbf b)\|_{\L^2} + \| \Delta_j \theta\|_{L^2} \nonumber
\\ &\quad \quad \quad\quad+ \| \Delta_j (\mathbf u \cdot \nabla \mathbf u)\|_{\L^2}\Big] \, d\tau \Big)^2\nonumber
\\ &\leq  \sum_{j < N} 2^{2j}\left. \int_{t_1}^{t_2}\left(\| (\mathbf b \cdot \nabla) \mathbf b\|^2_{\H^{s-1}} + \| \theta\|^2_{H^{s-1}} + \|  (\mathbf u \cdot \nabla) \mathbf u\|^2_{\H^{s-1}}\right) \, d\tau \right.
\end{align}

 Clearly the individual terms of right hand side of \eqref{lihs} is further less than their $L^{\infty}\left([0, T^{\ast}]; H^{s-1}\right)-$norm.

As $(\mathbf u, \mathbf \theta, \mathbf b) \in L^{\infty}\left([0, T^{\ast}]; \H^{s}\right) \times L^{\infty}\left([0, T^{\ast}]; H^{s}\right) \times L^{\infty}\left([0, T^{\ast}]; \H^{s}\right)$ and from Remark \ref{prodhs} and Remark \ref{div}  we obtain,
\begin{align*}
\| (\mathbf u \cdot \nabla) \mathbf u\|^2_{L^{\infty}\left([0, T^{\ast}]; \H^{s-1}\right)} &= \left(\sup_{t \in [0, T^{\ast}]}\| (\mathbf u \cdot \nabla) \mathbf u\|_{\H^{s-1}}\right)^2
\\ &\leq \left(\sup_{t \in [0, T^{\ast}]}\| \mathbf u \|_{\H^s} \cdot \sup_{t \in [0, T^{\ast}]}\| \mathbf u\|_{\H^{s}}\right)^2 < C_1 < \infty.
\end{align*}

Similarly, $\| (\mathbf u \cdot \nabla) \mathbf u\|^2_{L^{\infty}\left([0, T^{\ast}]; \H^{s-1}\right)} < C_2$ and  from Proposition \ref{fin}, $\| \theta\|^2_{L^{\infty}\left([0, T^{\ast}]; \H^{s-1}\right)} $

\noindent $< C_3.$ Choosing $M = C \cdot \max\{C_1, C_2, C_3\},$  for $\left.\right.|t_2 - t_1| < \frac{\epsilon}{M {2^{2N+1}}}$, we get from \eqref{lihs}
\begin{align}\label{ile}
\sum_{j < N} 2^{2js}&\left.\| \Delta_j \mathbf u(t_2) - \Delta_j \mathbf u(t_1)\|^2_{\L^2}\right. \leq M \sum_{j < N} 2^{2j} \left|t_2 - t_1\right| \leq M 2^{2N} \left|t_2 - t_1\right| < \frac{\epsilon}{2}. \left.\right.\left.\right. 
\end{align}
Finally combining $\eqref{tle}$ and $\eqref{ile}$, we conclude $\mathbf u \in C\left([0, T^{\ast}]; \H^{s}(\mathbb R^n)\right)$.
\end{proof}

\section{Blow-up criterion}\label{BC}

In this section, we will establish the Blow-up criterion of the local-in-time solution obtained in the previous section. We show that the $BMO$ norms of the vorticity and electrical current inhibit the breakdown of smooth solutions, relaxing the condition on the gradient of temperature, under suitable assumption on the regularity of the initial data. 

\begin{theorem}\label{buc}
Let $(\mathbf u_{0}, \mathbf \theta_{0}, \mathbf b_{0})$ $\in \H^s(\mathbb R^n) \times H^{s}(\mathbb R^n) \times \H^s(\mathbb R^n)$, s $>$ $\frac{n}{2}$+1, n = 2, 3. Let $(\mathbf u, \mathbf \theta, \mathbf b) \in C\left([0, T^{\ast}]; \H^{s}(\mathbb R^n)\right) \times C\left([0, T^{\ast}]; H^{s}(\mathbb R^n)\right) \times C\left([0, T^{\ast}]; \H^{s}(\mathbb R^n)\right)$ be a strong solution of the magnetic B\'enard problem \eqref{mb1}-\eqref{mb4}. If $(\mathbf u,\theta, \mathbf b)$ satisfies the condition 
\begin{equation}
\int_{0}^{T^{\ast}} \left( \|\nabla \times \mathbf u(\tau)\|_{BMO} + \|\nabla \mathbf \theta(\tau)\|_{BMO} + \|\nabla \times \mathbf b(\tau)\|_{BMO}\right) \, d\tau < \infty,
\end{equation}
then the solution $(\mathbf u, \mathbf \theta, \mathbf b)$ can be continuously extended to $[0, T]$ for some $T > T^{\ast}.$ 
\end{theorem}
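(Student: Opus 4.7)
The strategy is the standard Beale-Kato-Majda contradiction argument, upgraded from $L^\infty$ to $BMO$ via the Kozono-Taniuchi logarithmic Sobolev inequality (Lemma \ref{kte}). Suppose $T^{\ast}$ is the maximal time of existence and the integral hypothesis holds. I plan to show that $\|\mathbf u(t)\|_{\H^s}^2 + \|\mathbf\theta(t)\|_{H^s}^2 + \|\mathbf b(t)\|_{\H^s}^2$ remains bounded up to $T^{\ast}$; by Theorem \ref{mt1} applied at a time close to $T^{\ast}$, the solution then extends past $T^{\ast}$, giving the desired contradiction.

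First I would reproduce the $H^s$ energy estimate of Proposition \ref{fin} directly on the limit system \eqref{mb1}--\eqref{mb3} (no Fourier truncation now): apply $J^s$, pair with $J^s\mathbf u$, $J^s\mathbf\theta$, $J^s\mathbf b$, use divergence-freeness to kill the pressure and to cancel the cross term $((\mathbf b\cdot\nabla)\mathbf b, \mathbf u)_{H^s}+((\mathbf b\cdot\nabla)\mathbf u, \mathbf b)_{H^s}$ modulo a commutator, and apply the Kato-Ponce commutator bound of Lemma \ref{kpe}. The crucial point now is to keep the $L^\infty$ norms of $\nabla\mathbf u$, $\nabla\mathbf\theta$, $\nabla\mathbf b$ explicit rather than absorb them into $H^s$ bounds. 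This yields, with $X(t) := \|\mathbf u(t)\|_{\H^s}^2 + \|\mathbf\theta(t)\|_{H^s}^2 + \|\mathbf b(t)\|_{\H^s}^2$,
\begin{equation*}
\frac{d}{dt} X(t) \leq C \bigl(\|\nabla\mathbf u\|_{L^\infty} + \|\nabla\mathbf\theta\|_{L^\infty} + \|\nabla\mathbf b\|_{L^\infty} + 1\bigr) X(t).
\end{equation*}
The linear coupling terms $(\mathbf\theta e_n,\mathbf u)_{H^s}$ and $(\mathbf u\cdot e_n,\mathbf\theta)_{H^s}$ are harmless since they are bounded by $CX(t)$.

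Next I would upgrade the $L^\infty$ norms to $BMO$ norms using Lemma \ref{kte} applied with $p=2$ and some $s'$ slightly larger than $n/2$ but at most $s$:
\begin{equation*}
\|\nabla\mathbf u\|_{L^\infty} \leq C\bigl(1 + \|\nabla\mathbf u\|_{BMO}\bigl(1+\log^+\|\nabla\mathbf u\|_{W^{s-1,2}}\bigr)\bigr),
\end{equation*}
and similarly for $\nabla\mathbf b$ and $\nabla\mathbf\theta$. Since $\mathbf u$ is divergence-free, the Riesz transform bound on $BMO$ quoted after the definition gives $\|\nabla\mathbf u\|_{BMO}\le C\|\nabla\times\mathbf u\|_{BMO}$ and likewise $\|\nabla\mathbf b\|_{BMO}\le C\|\nabla\times\mathbf b\|_{BMO}$; for $\mathbf\theta$ we use $\|\nabla\mathbf\theta\|_{BMO}$ directly. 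Since $\|\nabla\mathbf u\|_{W^{s-1,2}}\le \|\mathbf u\|_{H^s}\le (e+X(t))^{1/2}$, I obtain
\begin{equation*}
\frac{dX}{dt} \leq C\bigl(1+g(t)\bigr)\bigl(1+\log(e+X(t))\bigr) X(t),
\end{equation*}
where $g(t) := \|\nabla\times\mathbf u(t)\|_{BMO} + \|\nabla\mathbf\theta(t)\|_{BMO} + \|\nabla\times\mathbf b(t)\|_{BMO}$ is integrable on $[0,T^{\ast})$ by hypothesis.

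Finally I would apply a logarithmic Gronwall inequality to the quantity $\log(e+X(t))$: dividing by $(e+X)\log(e+X)$ and integrating gives
\begin{equation*}
\log\log(e+X(t)) - \log\log(e+X(0)) \leq C\int_0^t \bigl(1+g(\tau)\bigr)\,d\tau,
\end{equation*}
so $X(t)$ stays finite up to $t=T^{\ast}$ as the right-hand side is finite by assumption. Hence $(\mathbf u,\mathbf\theta,\mathbf b)$ admits an $\H^s\times H^s\times\H^s$ limit at $T^{\ast}$, and restarting the local existence result of Theorem \ref{mt1} from that datum continues the solution to some $[0,T]$ with $T>T^{\ast}$. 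The main technical obstacle is the tight bookkeeping in the commutator term to ensure that no factor worse than one $\|\nabla\cdot\|_{L^\infty}$ appears on each nonlinear contribution; everything else then fits cleanly into the log-Gronwall scheme.
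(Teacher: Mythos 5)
Your proposal is correct and follows essentially the same route as the paper: the $H^s$ energy estimate via Kato--Ponce commutators keeping the $\|\nabla\cdot\|_{L^{\infty}}$ factors explicit, the upgrade to $BMO$ through the Kozono--Taniuchi inequality of Lemma \ref{kte} together with the Riesz-transform bound $\|\nabla\mathbf u\|_{BMO}\leq C\|\nabla\times\mathbf u\|_{BMO}$, and a logarithmic Gronwall argument yielding a double-exponential bound on $X(t)$. The only (immaterial) difference is packaging: you apply the log-Gronwall directly to the differential inequality via $\log\log(e+X)$, whereas the paper first integrates the plain Gronwall inequality, takes logarithms, and then applies Gronwall a second time to $\log(eX(t))$.
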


\begin{proof}
Applying $J^s$ to \eqref{mb1}-\eqref{mb3} and then taking $L^2$-inner product with $J^{s} \mathbf u, J^{s} \mathbf \theta$ and $J^{s} \mathbf b$ respectively, we obtain,  for $s  > \frac{n}{2} +1$, 
\begin{align}\label{jip1}
\left( \frac{\partial (J^s \mathbf u)}{\partial t}, J^s \mathbf u \right)_{L^2} &= \left( J^s \left[ (\mathbf b \cdot \nabla)\mathbf b \right], J^s \mathbf u \right)_{L^2} - \left( J^s \left[ (\mathbf u \cdot \nabla)\mathbf u \right], J^s \mathbf u \right)_{L^2}  \nonumber
\\ &\quad\quad- \left( \nabla J^{s} p_{\ast}, J^s \mathbf u \right)_{L^2} + \left( J^s (\theta e_n), J^s \mathbf u \right)_{L^2},
\end{align}
\begin{align}\label{jip2}
\left( \frac{\partial (J^s \mathbf \theta)}{\partial t}, J^s \mathbf \theta \right)_{L^2} = - \left( J^s \left[ (\mathbf u \cdot \nabla)\mathbf \theta \right], J^s \mathbf \theta \right)_{L^2} +  \left( J^{s} u_n, J^s \mathbf \theta \right)_{L^2},
\end{align}
\begin{align}\label{jip3}
\left( \frac{\partial (J^s \mathbf b)}{\partial t}, J^s \mathbf b \right)_{L^2} = \left( J^s \left[ (\mathbf b \cdot \nabla)\mathbf u \right], J^s \mathbf b \right)_{L^2} -  \left( J^s \left[ (\mathbf u \cdot \nabla)\mathbf b \right], J^s \mathbf b \right)_{L^2}.
\end{align}

Using the definition of commutator, \eqref{jip1}-\eqref{jip3} become
\begin{align}\label{jsc1}
\frac{1}{2} \frac{d}{dt} \|J^s \mathbf u\|^{2}_{\L^2} &= \left( [J^s, \mathbf b] \nabla \mathbf b, J^s \mathbf u \right)_{L^2} + \left( (\mathbf b \cdot \nabla)J^s \mathbf b, J^s \mathbf u \right)_{L^2}\nonumber
\\ &\quad\quad-\left( [J^s, \mathbf u] \nabla \mathbf u, J^s \mathbf u \right)_{L^2} - \left( (\mathbf u \cdot \nabla)J^s \mathbf u, J^s \mathbf u \right)_{L^2} \nonumber
\\ &\quad\quad- \left(J^{s} p_{\ast}, J^s \nabla \cdot \mathbf u \right)_{L^2} + \left( J^s (\theta e_n), J^s \mathbf u \right)_{L^2},
\end{align}
\begin{align}\label{jsc2}
\frac{1}{2} \frac{d}{dt} \|J^s \mathbf \theta\|^{2}_{L^2} = - \left( [J^s, \mathbf u] \nabla \mathbf \theta, J^s \mathbf \theta \right)_{L^2} - \left( (\mathbf u \cdot \nabla)J^s \mathbf \theta, J^s \mathbf \theta \right)_{L^2} + \left( J^{s} u_n, J^s \mathbf \theta \right)_{L^2},
\end{align}
\begin{align}\label{jsc3}
\frac{1}{2} \frac{d}{dt} \|J^s \mathbf b\|^{2}_{\L^2} &= \left( [J^s, \mathbf b] \nabla \mathbf u, J^s \mathbf b \right)_{L^2} + \left( (\mathbf b \cdot \nabla)J^s \mathbf u, J^s \mathbf b \right)_{L^2} \nonumber
\\ &\quad\quad- \left( [J^s, \mathbf u] \nabla \mathbf b, J^s \mathbf b \right)_{L^2} - \left( (\mathbf u \cdot \nabla)J^s \mathbf b, J^s \mathbf b \right)_{L^2}.
\end{align}

Now adding \eqref{jsc1}, \eqref{jsc2} and \eqref{jsc3}, then applying integration by parts, divergence free condition on $\mathbf u$ and $\mathbf b$ and the fact $\left( (\mathbf b \cdot \nabla)J^s \mathbf b, J^s \mathbf u \right)_{L^2} = -\left( (\mathbf b \cdot \nabla)J^s \mathbf u, J^s \mathbf b \right)_{L^2},$ we obtain
\begin{align}\label{eqjs}
\frac{1}{2} \frac{d}{dt} \left( \|\mathbf u\|^{2}_{\H^s} + \|\theta\|^{2}_{H^s} + \|\mathbf b\|^{2}_{\H^s}\right) &= \left( [J^s, \mathbf b] \nabla \mathbf b, J^s \mathbf u \right)_{L^2} -\left( [J^s, \mathbf u] \nabla \mathbf u, J^s \mathbf u \right)_{L^2}\nonumber
\\ &\quad+ \left( J^s (\theta e_n), J^s \mathbf u \right)_{L^2} - \left( [J^s, \mathbf u] \nabla \mathbf \theta, J^s \mathbf \theta \right)_{L^2}\nonumber
\\ &\quad+ \left( J^{s} u_n, J^s \mathbf \theta \right)_{L^2} + \left( [J^s, \mathbf b] \nabla \mathbf u, J^s \mathbf b \right)_{L^2}\nonumber
\\ &\quad- \left( [J^s, \mathbf u] \nabla \mathbf b, J^s \mathbf b \right)_{L^2}.
\end{align}

We will estimate each term on the right hand side of \eqref{eqjs} separately. Using Lemma \ref{kpe}, Remark \ref{gradhs}, Young's inequality and finally rearranging, we obtain,
\begin{align*}
&|\left( [J^s, \mathbf b] \nabla \mathbf b, J^s \mathbf u \right)_{L^2}| \leq \|[J^s, \mathbf b] \nabla \mathbf b\|_{\L^2} \|J^s \mathbf u\|_{\L^2}
\\ &\quad\quad\leq C \left( \| \nabla \mathbf b\|_{L^{\infty}} \| J^{s-1} \nabla \mathbf b \|_{\L^2} + \|J^{s} \mathbf b\|_{\L^2}\| \nabla \mathbf b\|_{L^{\infty}}\right) \|J^{s} \mathbf u\|_{\L^2}
\\ &\quad\quad\leq C \left( \| \nabla \mathbf b\|_{L^{\infty}} \| \nabla \mathbf b \|_{\H^{s-1}} + \|\mathbf b\|_{\H^s} \| \nabla \mathbf b\|_{L^{\infty}}\right) \|\mathbf u\|_{\H^s}
\\ &\quad\quad\leq C \left( \| \nabla \mathbf b\|_{L^{\infty}} \|\mathbf b \|_{\H^{s}} \|\mathbf u\|_{\H^s} + \|\mathbf b\|_{\H^s} \|\mathbf u\|_{\H^s} \| \nabla \mathbf b\|_{L^{\infty}}\right)
\\ &\quad\quad\leq C \| \nabla \mathbf b\|_{L^{\infty}} (\|\mathbf u\|_{\H^s} \|\mathbf b\|_{\H^s})
\\ &\quad\quad\leq C (\| \nabla \mathbf u\|_{L^{\infty}} + \| \nabla \mathbf \theta\|_{L^{\infty}} + \| \nabla \mathbf b\|_{L^{\infty}})(\|\mathbf u\|^{2}_{\H^s} + \|\mathbf \theta\|^{2}_{H^s} + \|\mathbf b\|^{2}_{\H^s}).
\end{align*}

Similarly the other commutator terms on the right hand side of \eqref{eqjs} can be estimated. 
\begin{align*}
&|([J^s, \mathbf u] \nabla \mathbf u, J^s \mathbf u)_{L^2}|
\\ &\leq C (\| \nabla \mathbf u\|_{L^{\infty}} + \| \nabla \mathbf \theta\|_{L^{\infty}} + \| \nabla \mathbf b\|_{L^{\infty}})(\|\mathbf u\|^{2}_{\H^s} + \|\mathbf \theta\|^{2}_{H^s} + \|\mathbf b\|^{2}_{\H^s}). 
\end{align*}
\begin{align*}
|( &[J^s, \mathbf u] \nabla \mathbf \theta, J^s \mathbf \theta)_{L^2}|
\\ &\leq C (\| \nabla \mathbf u\|_{L^{\infty}} + \| \nabla \mathbf \theta\|_{L^{\infty}} + \| \nabla \mathbf b\|_{L^{\infty}})(\|\mathbf u\|^{2}_{\H^s} + \|\mathbf \theta\|^{2}_{H^s} + \|\mathbf b\|^{2}_{\H^s}). 
\end{align*}
\begin{align*}
|( &[J^s, \mathbf b] \nabla \mathbf u, J^s \mathbf b)_{L^2}|
\\ &\leq C (\| \nabla \mathbf u\|_{L^{\infty}} + \| \nabla \mathbf \theta\|_{L^{\infty}} + \| \nabla \mathbf b\|_{L^{\infty}})(\|\mathbf u\|^{2}_{\H^s} + \|\mathbf \theta\|^{2}_{H^s} + \|\mathbf b\|^{2}_{\H^s}). 
\end{align*}
\begin{align*}
|( &[J^s, \mathbf u] \nabla \mathbf b, J^s \mathbf b)_{L^2}|
\\ &\leq C (\| \nabla \mathbf u\|_{L^{\infty}} + \| \nabla \mathbf \theta\|_{L^{\infty}} + \| \nabla \mathbf b\|_{L^{\infty}})(\|\mathbf u\|^{2}_{\H^s} + \|\mathbf \theta\|^{2}_{H^s} + \|\mathbf b\|^{2}_{\H^s}). 
\end{align*}

Now
\begin{align*}
|\left( J^s (\theta e_n), J^s \mathbf u \right)_{L^2}|  &\leq \| J^s (\theta e_n)\|_{\L^{2}} \|J^s \mathbf u\|_{\L^{2}} \leq  \|\theta\|_{H^s} \|\mathbf u\|_{\H^s} 
\\ &\leq C (\|\mathbf u\|^{2}_{\H^s} + \|\mathbf \theta\|^{2}_{H^s} + \|\mathbf b\|^{2}_{\H^s}) .
\end{align*}
and
\begin{align*}
|\left( J^{s} \mathbf u_n, J^s \mathbf \theta \right)_{L^2}| \leq C (\|\mathbf u\|^{2}_{\H^s} + \|\mathbf \theta\|^{2}_{H^s} + \|\mathbf b\|^{2}_{\H^s}).
\end{align*}

Combining all the estimates above,  from \eqref{eqjs} after taking $X(t) = \|\mathbf u(t)\|^{2}_{\H^s} + \|\mathbf \theta(t)\|^{2}_{H^s} + \|\mathbf b(t)\|^{2}_{\H^s}$ for $t \in [0, T^{\ast}],$ we obtain
\begin{align*}
\frac{d}{dt}X(t) \leq C (\| \nabla \mathbf u\|_{L^{\infty}} + \| \nabla \mathbf \theta\|_{L^{\infty}} + \| \nabla \mathbf b\|_{L^{\infty}} +2) X(t).
\end{align*}
Standard Gronwall's inequality yields
\[ X(t) \leq X(0) \left.\right. \exp \left( C \int_{0}^{t} (\|\nabla \mathbf u(\tau)\|_{L^{\infty}}+ \|\nabla \mathbf \theta(\tau)\|_{L^{\infty}}+ \| \nabla \mathbf b(\tau)\|_{L^{\infty}} + 2) \, d\tau \right).\]
Hence
\begin{align}\label{ee3}
 X(t)\leq X(0) \exp \left( C \int_{0}^{t} (\|\nabla \mathbf u(\tau)\|_{L^{\infty}}+ \|\nabla \mathbf \theta(\tau)\|_{L^{\infty}} +\| \nabla \mathbf b(\tau)\|_{L^{\infty}} + 2) \, d\tau \right).
\end{align}
Due to the logarithmic Sobolev inequality in Lemma \ref{kte}, and the fact that singular integral operators of Calderon-Zygmund type are bounded in $BMO$ (i.e. $\|\nabla \mathbf u\|_{BMO} \leq \|\nabla \times \mathbf u\|_{BMO}$), for $s > \frac{n}{2} + 1$ we obtain, 
\begin{align}\label{bmou}
\|\nabla \mathbf u\|_{L^{\infty}} &\leq C\left( 1 + \|\nabla \mathbf u\|_{BMO}\left(1 + \log^{+}\|\nabla \mathbf u\|_{ \H^{s-1}}\right) \right) \nonumber
\\ &\leq C\left( 1 + \|\nabla \times \mathbf u\|_{BMO}\left(1 + \log^{+}\|\mathbf u\|_{\H^{s}}\right) \right) \nonumber
\\ &\leq C\left( 1 + \|\nabla \times \mathbf u\|_{BMO}\left(1 + \frac{1}{2} \log^{+}\|\mathbf u\|^{2}_{\H^{s}}\right) \right) \nonumber
\\ &\leq C\left( 1 + \|\nabla \times \mathbf u\|_{BMO}\left(1 + \frac{1}{2} \log^{+} \left(\|\mathbf u\|^{2}_{\H^{s}} + \|\mathbf \theta\|^{2}_{H^{s}}+ \|\mathbf b\|^{2}_{\H^s}\right) \right) \right) \nonumber
\\ &\leq C\left( 1 + \|\nabla \times \mathbf u\|_{BMO}\left(1 + \log^{+} \left(\|\mathbf u\|^{2}_{\H^{s}} + \|\mathbf \theta\|^{2}_{H^{s}}+ \|\mathbf b\|^{2}_{\H^s}\right) \right) \right).
\end{align}
Similarly we obtain,
\begin{align}\label{bmot}
\|\nabla \mathbf \theta\|_{L^{\infty}} \leq C\left( 1 + \|\nabla \mathbf \theta\|_{BMO}\left(1 + \log^{+} \left(\|\mathbf u\|^{2}_{\H^{s}} + \|\mathbf \theta\|^{2}_{H^{s}}+ \|\mathbf b\|^{2}_{\H^s}\right) \right) \right)
\end{align}
and
\begin{align}\label{bmob}
\|\nabla \mathbf b\|_{L^{\infty}} \leq C\left( 1 + \|\nabla \times \mathbf b\|_{BMO}\left(1 + \log^{+} \left(\|\mathbf u\|^{2}_{\H^{s}} + \|\mathbf \theta\|^{2}_{H^{s}}+ \|\mathbf b\|^{2}_{\H^s}\right) \right) \right)
\end{align}
Now using \eqref{bmou}, \eqref{bmot} and \eqref{bmob} in \eqref{ee3}, we obtain for all $t \in [0, T^{\ast}]$,
\begin{align*}
X(t)&\leq X(0)\exp\Big[C\int_{0}^{t} \Big\{5 + (\|\nabla \times \mathbf u(\tau)\|_{BMO}+ \|\nabla \mathbf \theta(\tau)\|_{BMO} 
\\ & \qquad \qquad + \|\nabla \times \mathbf b(\tau)\|_{BMO}) \times\left(1 + \log^{+} X(\tau) \right)\, \Big\}d\tau\Big].
\end{align*}
Taking ``log" on both sides we get for all $t \in [0, T^{\ast}]$,
\begin{align*}
\log X(t) &\leq \log X(0) + C \int_{0}^{t} \Big\{5 + (\|\nabla \times \mathbf u(\tau)\|_{BMO}+ \|\nabla \mathbf \theta(\tau)\|_{BMO} 
\\&\qquad\qquad + \|\nabla \times \mathbf b(\tau)\|_{BMO}) \times (1 + \log^{+} X(\tau))\Big\} \, d\tau.
\end{align*}
Rearranging the terms we have
\begin{align*}
\log (eX(t)) & \leq \log (eX(0)) + C T^{\ast} + \int_{0}^{t} \Big\{(\|\nabla \times \mathbf u(\tau)\|_{BMO}+ \|\nabla \mathbf \theta(\tau)\|_{BMO}\\
&\quad + \|\nabla \times \mathbf b(\tau)\|_{BMO}) (\log (eX(\tau)))\Big\} \, d\tau.
\end{align*}
Now Gronwall's inequality yields
\begin{align*}
\log (eX(t)) &\leq \Big\{(\log (eX(0)) + CT^{\ast}) \times \exp \Big( C \int_{0}^{t} (\|\nabla \times \mathbf u(\tau)\|_{BMO}
\\&\quad\quad + \|\nabla \mathbf \theta(\tau)\|_{BMO}+ \|\nabla \times \mathbf b(\tau)\|_{BMO}) \, d\tau \Big)\Big\}.
\end{align*}
Taking supremum over all $t \in [0, T^{\ast}]$ we obtain,
\begin{align*}
\sup_{t \in [0, T^{\ast}]} \log X(t) &\leq \sup_{t \in [0, T^{\ast}]} \log (eX(t))\\&\quad \leq (\log (eX(0)) + CT^{\ast}) \times \exp \Big( C \int_{0}^{T^{\ast}} (\|\nabla \times \mathbf u(\tau)\|_{BMO}\\& \qquad+ \|\nabla \mathbf \theta(\tau)\|_{BMO}+ \|\nabla \times \mathbf b(\tau)\|_{BMO}) \, d\tau \Big).
\end{align*}
So finally we acquire,
\begin{align*}
\sup_{t \in [0, T^{\ast}]} X(t) &\leq e^{(1+CT^{\ast})} X(0) \times \exp\Big\{ \exp\Big( C \int_{0}^{T^{\ast}} (\|\nabla \times \mathbf u(\tau)\|_{BMO}
\\ &\quad\quad + \|\nabla \mathbf \theta(\tau)\|_{BMO}+ \|\nabla \times \mathbf b(\tau)\|_{BMO}) \, d\tau \Big)\Big\}.
\end{align*}
This concludes that if 
\[\int_{0}^{T^{\ast}} (\|\nabla \times \mathbf u(\tau)\|_{BMO}+ \|\nabla \mathbf \theta(\tau)\|_{BMO}+ \|\nabla \times \mathbf b(\tau)\|_{BMO}) \, d\tau < \infty,\]
then by continuation of local solutions, we can extend the solution to $[0, T]$ for some $T > T^{\ast}$.

\end{proof}

We now show that the assumption we made in Theorem \ref{buc} for $\nabla\mathbf\theta$ can be relaxed completely. In other words,  provided $ \mathbf \theta_0 \in H^s(\mathbb R^{n}) \cap W^{1, p}(\mathbb R^{n}), p\geq 2$, the bound on curl of $\mathbf u$ and curl of $\mathbf b$  are enough to extend the solution continuously to some time $T > T^{\ast}$.\\
\noindent
Before proving the above result let us note the following vector identity.
\begin{remark}\label{vi}
\begin{align*}
\nabla (\mathbf u \cdot \nabla \mathbf \theta) &= (\mathbf u \cdot \nabla) \nabla \mathbf \theta + (\nabla \mathbf \theta \cdot  \nabla) \mathbf u + \mathbf u \times (\nabla \times \nabla \mathbf \theta) + \nabla \mathbf \theta \times (\nabla \times \mathbf u)
\\ &= (\mathbf u \cdot \nabla) \nabla \mathbf \theta + (\nabla \mathbf \theta \cdot  \nabla) \mathbf u + \nabla \mathbf \theta \times (\nabla \times \mathbf u)
\\ &=  (\mathbf u \cdot \nabla) \nabla \mathbf \theta + (\nabla \mathbf u)^{t} \cdot \nabla \mathbf \theta 
\end{align*}
where we have used the facts that curl of the gradient of a scalar function is zero (i.e., $\mathbf u \times (\nabla \times \nabla \mathbf \theta) = 0$)  and $(\nabla \mathbf u)^{t} \cdot \nabla \mathbf \theta = (\nabla \mathbf \theta \cdot  \nabla) \mathbf u + \nabla \mathbf \theta \times (\nabla \times \mathbf u).$.
\end{remark}

\begin{theorem}\label{buc1}
Let $s > \frac{n}{2} + 1$, $\mathbf u_{0} \in \H^{s}(\mathbb R^{n})$, $\mathbf b_{0} \in \H^s(\mathbb R^n)$ and $\mathbf \theta_0 \in H^s(\mathbb R^{n}) \cap W^{1, p}(\mathbb R^{n}),$ for $ 2 \leq p \leq \infty,$ n=2, 3. Let $(\mathbf u, \mathbf \theta, \mathbf b) \in C\left([0, T^{\ast}]; \H^{s}(\mathbb R^n)\right) \times C\left([0, T^{\ast}]; H^{s}(\mathbb R^n)\right) $

\noindent $\times C\left([0, T^{\ast}]; H^{s}(\mathbb R^n)\right)$ be a strong solution of the ideal magnetic B\'enard problem \eqref{mb1}-\eqref{mb4}. Then 
\[\int_{0}^{T^{\ast}} \|\nabla \times \mathbf u(\tau)\|_{BMO} + \|\nabla \times \mathbf b(\tau)\|_{BMO} \,d\tau < \infty\]
guarantees that the solution can be extended continuously to $[0, T]$ for some $T > T^{\ast}.$
\end{theorem}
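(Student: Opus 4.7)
The plan is to upgrade the energy estimate of Theorem~\ref{buc} by first propagating an auxiliary $L^p$ bound for $\nabla\theta$, and then using a logarithmic Sobolev inequality to trade the $L^\infty$-norm of $\nabla\theta$ for that $L^p$ bound plus a $\log$-factor in $\|\theta\|_{H^s}$.

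First I would derive the evolution equation for $\nabla\theta$: applying $\nabla$ to \eqref{mb2} and invoking the vector identity of Remark~\ref{vi} yields
\begin{align*}
\partial_t \nabla\theta + (\mathbf u\cdot\nabla)\nabla\theta \;=\; -(\nabla\mathbf u)^{t}\nabla\theta + \nabla u_n.
\end{align*}
For $2\le p<\infty$, pairing with $|\nabla\theta|^{p-2}\nabla\theta$, integrating over $\mathbb R^n$, and using $\nabla\cdot\mathbf u=0$ to eliminate the transport term, H\"older's inequality followed by Gronwall gives
\begin{align*}
\|\nabla\theta(t)\|_{L^p} \;\le\; \Big(\|\nabla\theta_0\|_{L^p} + \int_{0}^{t}\!\|\nabla\mathbf u(\tau)\|_{L^p}\,d\tau\Big)\,\exp\Big(C\!\int_{0}^{t}\!\|\nabla\mathbf u(\tau)\|_{L^\infty}\,d\tau\Big),
\end{align*}
with the case $p=\infty$ treated by the method of characteristics along the flow of $\mathbf u$. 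Since $s>n/2+1$ gives $\nabla\mathbf u\in H^{s-1}\hookrightarrow L^p$ for every $p\in[2,\infty]$, the $L^p$ factor on the right is controlled by $\|\mathbf u\|_{\H^s}$, to be bounded in the next step.

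Next I would reproduce the $H^s$-energy estimate of Theorem~\ref{buc} to obtain
\begin{align*}
\tfrac{d}{dt}X(t) \;\le\; C\big(\|\nabla\mathbf u\|_{L^\infty}+\|\nabla\theta\|_{L^\infty}+\|\nabla\mathbf b\|_{L^\infty}+2\big)\,X(t),
\end{align*}
where $X(t)=\|\mathbf u\|_{\H^s}^2+\|\theta\|_{H^s}^2+\|\mathbf b\|_{\H^s}^2$, and then estimate the $\|\nabla\theta\|_{L^\infty}$-term by a Brezis--Gallouet--Wainger type inequality of the form
\begin{align*}
\|\nabla\theta\|_{L^\infty} \;\le\; C\big(1+\|\nabla\theta\|_{L^p}\big)\big(1+\log^{+}\|\theta\|_{H^s}\big).
\end{align*}
Combined with the Kozono--Taniuchi bounds \eqref{bmou}, \eqref{bmob} on $\|\nabla\mathbf u\|_{L^\infty}$ and $\|\nabla\mathbf b\|_{L^\infty}$ used in Theorem~\ref{buc}, and the uniform $L^p$ control of $\nabla\theta$ coming from Step~1, this produces a differential inequality of the form
\begin{align*}
\tfrac{d}{dt}\log(e+X(t)) \;\le\; C + C\,\Phi(t)\,\bigl(1+\log(e+X(t))\bigr),
\end{align*}
with $\Phi(t)=\|\nabla\times\mathbf u(t)\|_{BMO}+\|\nabla\times\mathbf b(t)\|_{BMO}+\|\nabla\theta(t)\|_{L^p}$. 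Gronwall's inequality applied to $\log(e+X)$, together with the hypothesis $\int_{0}^{T^{\ast}}(\|\nabla\times\mathbf u\|_{BMO}+\|\nabla\times\mathbf b\|_{BMO})\,d\tau<\infty$ and a bootstrap on the $L^p$ bound of Step~1, then yields $\sup_{t\in[0,T^{\ast}]}X(t)<\infty$, and continuation via Theorem~\ref{mt1} extends the solution past $T^{\ast}$.

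The main obstacle is the third step: one needs a genuinely \emph{logarithmic} (rather than polynomial) dependence of $\|\nabla\theta\|_{L^\infty}$ on $\|\theta\|_{H^s}$, because a naive Gagliardo--Nirenberg interpolation $\|\nabla\theta\|_{L^\infty}\lesssim \|\nabla\theta\|_{L^p}^{1-a}\|\theta\|_{H^s}^{a}$ yields super-linear growth in $X$ and destroys the Gronwall closure. A secondary subtlety is that the $L^p$ bound of Step~1 contains $\exp\bigl(\int_{0}^{t}\|\nabla\mathbf u\|_{L^\infty}\,d\tau\bigr)$, and $\|\nabla\mathbf u\|_{L^\infty}$ itself depends logarithmically on $X$ via \eqref{bmou}; consequently the closure must be arranged at the level of $\log(e+X)$ rather than $X$ directly, and the value of the exponent $p$ must be tracked carefully to ensure that the Brezis--Wainger-type inequality is available on the whole range $2\le p\le\infty$ allowed by the hypothesis.
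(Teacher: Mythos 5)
Your Step~1 is sound and is essentially the paper's own starting point: the $L^p$ transport estimate for $\nabla\theta$ obtained by pairing the gradient of \eqref{mb2} with $|\nabla\theta|^{p-2}\nabla\theta$ and using the identity of Remark~\ref{vi}. The gap is in Step~3. The inequality
\[
\|\nabla\theta\|_{L^\infty}\le C\bigl(1+\|\nabla\theta\|_{L^p}\bigr)\bigl(1+\log^{+}\|\theta\|_{H^s}\bigr)
\]
is false for every finite $p$: a Brezis--Gallouet--Wainger estimate requires the ``low'' norm to be critical for the embedding into $L^\infty$ (i.e.\ $W^{k,r}$ with $kr=n$, or $BMO$ as in Lemma~\ref{kte}), and $L^p$ with $p<\infty$ is strictly subcritical. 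Concretely, taking $f(x)=M\phi(x/\epsilon)$ with $M=\epsilon^{-n/p}$ keeps $\|f\|_{L^p}$ bounded while $\|f\|_{L^\infty}$ grows like $\epsilon^{-n/p}$ and the right-hand side grows only like $\log(1/\epsilon)$. You correctly flag in your closing paragraph that a genuinely logarithmic dependence on $\|\theta\|_{H^s}$ is what is needed, but the proposal never supplies a valid substitute, so the Gronwall closure of Step~3 does not go through as written.

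The paper resolves this by pushing your Step~1 all the way to $p=\infty$: it bounds the forcing term $\|\nabla u_n\|_{L^p}$ by Gagliardo--Nirenberg (Lemma~\ref{gni}) with exponents tracked so that the constants survive taking $p$-th roots, and after Gronwall and the limit $p\to\infty$ obtains
\[
\|\nabla\theta(t)\|_{L^\infty}\le C(T^{\ast})\,\|\nabla\theta_0\|_{L^\infty}\exp\Bigl(\int_0^t\|\nabla u(\tau)\|_{L^\infty}\,d\tau\Bigr).
\]
Since Lemma~\ref{kte} gives $\|\nabla u\|_{L^\infty}\le C\bigl(1+\|\nabla\times u\|_{BMO}(1+\log^{+}\|u\|_{\H^s})\bigr)$ and $\sup_{[0,T^{\ast}]}\|u\|_{\H^s}$ is finite under the stated hypotheses, the exponent is controlled by $\int_0^{T^{\ast}}\|\nabla\times u\|_{BMO}\,d\tau<\infty$. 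Hence $\|\nabla\theta\|_{BMO}\le 2\|\nabla\theta\|_{L^\infty}$ is bounded on $[0,T^{\ast}]$, the hypothesis $\int_0^{T^{\ast}}\|\nabla\theta\|_{BMO}\,d\tau<\infty$ of Theorem~\ref{buc} is automatically satisfied, and the conclusion follows by citing Theorem~\ref{buc} --- no second pass through the $H^s$ energy estimate is needed. If you replace your Step~3 by this $p\to\infty$ limit (which you already gesture at via the method of characteristics for $p=\infty$), your argument becomes correct and essentially coincides with the paper's.
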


\begin{proof}
We consider the equation \eqref{mb2} as follows,
\[\frac{\partial \mathbf \theta}{\partial t} + (\mathbf u \cdot \nabla)\mathbf \theta = \mathbf {u}_n,\]
and apply the gradient operator $\nabla = (\partial_{x_1}, \dots, \partial_{x_n})$ on both sides and take $L^2$-inner product with $\nabla \mathbf \theta |\nabla \mathbf \theta|^{p-2}$ to obtain,
\[\left(\frac{\partial}{\partial t} (\nabla \mathbf \theta), \nabla \mathbf \theta |\nabla \mathbf \theta|^{p-2}\right) + \left(\nabla (\mathbf u \cdot \nabla \mathbf \theta), \nabla \mathbf \theta |\nabla \mathbf \theta|^{p-2}\right) =\left(\nabla \mathbf u_n, \nabla \mathbf \theta |\nabla \mathbf \theta|^{p-2}\right).\]
Using the vector identity in Remark \ref{vi} we obtain,
\begin{align}\label{tttt}
\left(\frac{\partial}{\partial t} (\nabla \mathbf \theta), \nabla \mathbf \theta |\nabla \mathbf \theta|^{p-2}\right) + \left( (\nabla \mathbf u)^{t} \cdot \nabla \mathbf \theta, \nabla \mathbf \theta |\nabla \mathbf \theta|^{p-2}\right) &+ \left( (\mathbf u \cdot \nabla) \nabla \mathbf \theta, \nabla \mathbf \theta |\nabla \mathbf \theta|^{p-2}\right) \nonumber
\\ &=\left(\nabla \mathbf u_n, \nabla \mathbf \theta |\nabla \mathbf \theta|^{p-2}\right).
\end{align}
We will calculate each term separately. The first term of left hand side of \eqref{tttt} gives,
\[\left(\frac{\partial}{\partial t} (\nabla \mathbf \theta), \nabla \mathbf \theta |\nabla \mathbf \theta|^{p-2}\right) = \frac{1}{p} \int_{\mathbb R^{n}} \frac{\partial}{\partial t}  |\nabla \mathbf \theta|^{p} \,dx = \frac{1}{p} \frac{d}{dt} \| \nabla \mathbf \theta\|_{L^p}^{p}\]
and
\begin{align*}
\left( (\nabla \mathbf u)^{t} \cdot \nabla \mathbf \theta, \nabla \mathbf \theta |\nabla \mathbf \theta|^{p-2}\right) &= \int_{\mathbb R^{n}} (\nabla \mathbf u)^{t} \cdot \nabla \mathbf \theta \cdot \nabla \mathbf \theta |\nabla \mathbf \theta|^{p-2} \,dx 
\\ &\leq \int_{\mathbb R^{n}} (\nabla \mathbf u)^{t} \cdot |\nabla \mathbf \theta|^{p}
\leq \|\nabla \mathbf u\|_{L^{\infty}} \|\nabla \mathbf \theta\|_{L^p}^{p}.
\end{align*}
By applying integration by parts and the divergence free condition of $\mathbf u$, we have from the third term of \eqref{tttt},
\begin{align*}
\left( (\mathbf u \cdot \nabla) \nabla \mathbf \theta, \nabla \mathbf \theta |\nabla \mathbf \theta|^{p-2}\right) &= \int_{\mathbb R^{n}} (\mathbf u \cdot \nabla) \nabla \mathbf \theta \cdot \nabla \mathbf \theta |\nabla \mathbf \theta|^{p-2} \,dx
\\ &= \frac{1}{p}  \int_{\mathbb R^{n}} \mathbf u \cdot \nabla |\nabla \mathbf \theta|^{p} \,dx
= -\frac{1}{p} \int_{\mathbb R^{n}} (\nabla \cdot \mathbf u) \cdot |\nabla \mathbf \theta|^{p} \,dx = 0.
\end{align*}
Now 
\begin{align*}
\left|\left(\nabla \mathbf u_n, \nabla \mathbf \theta |\nabla \mathbf \theta|^{p-2}\right)\right| &= \left|\int_{\mathbb R^{n}} \nabla \mathbf u_n \cdot \nabla \mathbf \theta |\nabla \mathbf \theta|^{p-2} \, dx \right| \leq \int_{\mathbb R^{n}} |\nabla \mathbf u_n | |\nabla \mathbf \theta|^{p-1} \, dx
\\ &\leq \left( \int_{\mathbb R^{n}}  |\nabla \mathbf u_n |^{p}\right)^{\frac{1}{p}} \left( \int_{\mathbb R^{n}} ( |\nabla \mathbf \theta|^{p-1})^{\frac{p}{p-1}}\right)^{\frac{p-1}{p}}
\\ &\leq \| \nabla \mathbf u_n \|_{L^p} \|\nabla \mathbf \theta\|_{L^p}^{p-1} \leq \frac{1}{p} \left(\| \nabla \mathbf u_n \|_{L^p}^{p} + (p-1) \|\nabla \mathbf \theta\|_{L^p}^{p}\right)
\end{align*}
Due to the Gagliardo-Nirenberg interpolation inequality (see Lemma \ref{gni}), while $j=1, m=3, r=2, q=2$, we have for 
$n=2$, 
\begin{align} \label{n2}
\| \nabla \mathbf u_2 \|_{L^p} \leq C \|\mathbf u_2 \|_{L^2}^{\frac{2+p}{3p}}  \|\mathbf u_2 \|_{H^3}^{\frac{2p-2}{3p}}, \quad \quad p \geq 2,
\end{align}
and for $n=3$,
\begin{align} 
\| \nabla \mathbf u_3 \|_{L^p} \leq C \|\mathbf u_3 \|_{L^2}^{\frac{6+p}{6p}}  \|\mathbf u_3 \|_{H^3}^{\frac{5p-6}{6p}}, \quad \quad  p \geq 2.
\end{align}
From the term-wise estimates of \eqref{tttt}, we obtain when $n=3$,
\[\frac{d}{dt} \| \nabla \mathbf \theta\|_{L^p}^{p} \leq  p \|\nabla \mathbf u\|_{L^{\infty}} \|\nabla \mathbf \theta\|_{L^p}^{p} + C^{p} \|\mathbf u_3 \|_{L^2}^{\frac{6+p}{6}}  \|\mathbf u_3 \|_{H^3}^{\frac{5p-6}{6}} + (p-1) \| \nabla \mathbf \theta\|_{L^p}^{p},\]
which further gives due to Gronwall's inequality,
\begin{align*}
\| \nabla \mathbf \theta\|_{L^p}^{p} &\leq  \left(\| \nabla \mathbf \theta_{0}\|_{L^p}^{p} +  C^{p} \int_{0}^{t} \|\mathbf u_3(\tau) \|_{L^2}^{\frac{6+p}{6}}  \|\mathbf u_3(\tau) \|_{H^3}^{\frac{5p-6}{6}} \, d\tau \right) 
\\ &\ \ \ \ \ \ \ \ \ \ \  \times \exp \left(  \int_{0}^{t} (p\|\nabla \mathbf u(\tau)\|_{L^{\infty}}+ p-1) \,d\tau \right)
\\ &\leq \left(\| \nabla \mathbf \theta_{0}\|_{L^p}^{p} +  C^{p} \ T^{\ast} \left(\sup_{t \in [0, T^{\ast}]} \|\mathbf u_3 \|_{L^2}\right)^{\frac{6+p}{6}}  \left(\sup_{t \in [0, T^{\ast}]} \|\mathbf u_3\|_{H^3}\right)^{\frac{5p-6}{6}} \right)
\\ &\ \ \ \ \ \ \ \ \ \ \  \times \exp (pT^{\ast}) \  \exp \left(p \int_{0}^{t} \|\nabla \mathbf u(\tau)\|_{L^{\infty}} \, d\tau\right).
\end{align*}
Therefore we obtain, when $n=3$,
\begin{align*}
\| \nabla \mathbf \theta\|_{L^p} &\leq \left(\| \nabla \mathbf \theta_{0}\|_{L^p}^{p} +  C^{p} \ T^{\ast} \left(\sup_{t \in [0, T^{\ast}]} \|\mathbf u_3 \|_{L^2}\right)^{\frac{6+p}{6}}  \left(\sup_{t \in [0, T^{\ast}]} \|\mathbf u_3\|_{H^3}\right)^{\frac{5p-6}{6}} \right)^{\frac{1}{p}}
\\ &\ \ \ \ \ \ \ \ \ \ \  \times \exp (T^{\ast}) \  \exp \left( \int_{0}^{t} \|\nabla \mathbf u(\tau)\|_{L^{\infty}} \, d\tau\right)
\\ &\leq \left(\| \nabla \mathbf \theta_{0}\|_{L^p} +  C T^{{\ast}^{1/p}} \left(\sup_{t \in [0, T^{\ast}]} \|\mathbf u_3 \|_{L^2}\right)^{\frac{6+p}{6p}}  \left(\sup_{t \in [0, T^{\ast}]} \|\mathbf u_3\|_{H^3}\right)^{\frac{5p-6}{6p}} \right)
\\ &\ \ \ \ \ \ \ \ \ \ \  \times \exp (T^{\ast}) \  \exp \left( \int_{0}^{t} \|\nabla \mathbf u(\tau)\|_{L^{\infty}} \, d\tau\right).
\end{align*}
Since the $L^2$-energy estimate and $H^3$-energy estimate of $\mathbf u$ are finite due to Propositions \ref{lfin} and \ref{fin}, letting $p \to \infty,$ we finally obtain
\begin{align*}
\| \nabla \mathbf \theta\|_{L^\infty} \leq  C(T^{\ast}) \  \| \nabla \mathbf \theta_{0}\|_{L^\infty} \exp \left( \int_{0}^{t} \|\nabla \mathbf u(\tau)\|_{L^{\infty}} \,d\tau \right).
\end{align*}
Similarly, the case $n=2$ (from \eqref{n2}) will also yield the same above estimate.\\
\noindent
Note that, due to Lemma \ref{kte}, and properties of the $BMO$ space, we further have,
\begin{align*}
\| \nabla \mathbf \theta\|_{L^\infty} \leq  \| \nabla \mathbf \theta_{0}\|_{L^\infty} \exp \left( C \int_{0}^{t} \left( 1 + \|\nabla \times \mathbf{u}(\tau)\|_{BMO}\left(1 + log^{+}\|\mathbf{u}(\tau)\|_{\H^{s}}\right) \right) \,d\tau \right).
\end{align*}
 As  $ \mathbf \theta_{0} \in H^s(\mathbb R^{n}) \cap W^{1, p}(\mathbb R^{n}), 2\leq p \leq\infty$ and $\sup_{t \in [0, T^{\ast}]} \|\mathbf u\|_{\H^{s}}$ is bounded for $s>n/2+1$, we have,
\begin{align}\label{tw}
\| \nabla \mathbf \theta\|_{L^\infty} \leq C \exp \left(\int_{0}^{T^{\ast}}  \|\nabla \times \mathbf{u}(\tau)\|_{BMO} \, d\tau\right) 
\end{align}
where $C = C(\|\nabla \mathbf \theta_{0}\|_{L^\infty}, \|\mathbf u\|_{\H^{s}}, T^{\ast})$. \\
\noindent
Due to the assumption $\int_{0}^{T^{\ast}} \|\nabla \times \mathbf u(\tau)\|_{BMO} \,d\tau < \infty$, the estimate in \eqref{tw} is bounded. Hence, $\| \nabla \mathbf \theta\|_{BMO} \leq 2 \| \nabla \mathbf \theta\|_{L^\infty} \leq C < \infty.$
So the bound on $BMO$ norms of vorticity and electrical current are enough to guarantee that the solution can be extended to $[0, T]$ for some $T > T^{\ast}$ provided $ \mathbf \theta_{0} \in H^s(\mathbb R^{n}) \cap W^{1, p}(\mathbb R^{n}).$ 
\end{proof}

\medskip\noindent
{\bf Acknowledgements:} Utpal Manna's work has been supported  by National
Board of Higher Mathematics (NBHM), Govt. of India. Both
the authors would like to thank Indian Institute of Science Education
and Research Thiruvananthapuram for providing stimulating scientific environment
and resources.

\vskip .2in\noindent


\begin{thebibliography}{00}

\bibitem{AHK} {\sc Abidi, H., Hmidi, T., Keraani, S.} 
On the global regularity of axisymmetric Navier-Stokes-Boussinesq system, 
{\em Discrete Contin. Dyn. Syst.}, \textbf{29} (3), 737-756, 2011.

\bibitem{AF} {\sc Adams, R. A., Fournier, J. J. F.} 
{\em Sobolev Spaces,}
Pure and Applied Mathematics (Amsterdam) Vol.\textbf{140}, 
Academic press, 1975.

\bibitem{BKM} {\sc  Beale, J.T., Kato, T., Majda,  A.}
 Remarks on the breakdown of smooth solutions for the 3-D Euler equations,
{\em Comm. Math. Phys.},
\textbf{94}, 61-66, 1984.

\bibitem{BL} {\sc Bourgain, J., Li, D.}
Strong ill-posedness of the incompressible Euler equation in borderline Sobolev spaces,
{\em Inventiones mathematicae},
\textbf{201}(1), 97-157, 2015.

\bibitem{BG} {\sc Brezis, H., Gallouet, T.} 
Nonlinear Schr\"{o}dinger evolution equations, 
{\em Nonlinear Anal. TMA} 
\textbf{4}, 677-681, 1980.

\bibitem{BW} {\sc Brezis, H., Wainger, S.} 
A note on limiting cases of Sobolev embeddings and convolution inequalities, 
{\em Comm. Partial Differential Equations}
\textbf{5}, 773-789, 1980.

\bibitem{CKS} {Caflisch, R. E., Klapper, I. and Steele, G.} Remarks on Singularities,
Dimension and Energy Dissipation for Ideal Hydrodynamics and MHD,
{\em Communications in  Mathematical Physics} {\bf 184} (2)
443-455, 1997.

\bibitem{CN} {\sc  Chae, D., Nam H.-S.}
 Local existence and blow-up criterion for the Boussinesq equations, 
{\em Proc. of  Roy. Soc. Edinburgh, Sect. A},
\textbf{127} (5), 935-946, 1997.

\bibitem{CD} {\sc Cheng, J., Du, L.}
On Two-Dimensional Magnetic B\'{e}nard Problem with Mixed Partial Viscosity
{\em J. Math. Fluid Mech.}
\textbf{17}, 769-797, 2015.

\bibitem{DD} {\sc Dhongade, U. D., Deo, S. G.}  
A Nonlinear Generalization of Bihari's Inequality, 
{\em Proceedings of the American Mathematical Society}
\textbf{54} (1), 211-216, 1976.

\bibitem{Ev} {\sc Evans, L.C.}
{\em Partial Differential Equations}, Second Ed., Grad. Stud. Math., vol. 19, American Mathematical Society, Providence,
RI, 2010.

\bibitem{Fn} {\sc Fefferman, C. L.}
Characterizations of Bounded Mean Oscillation,
{\em Bulletin of the American Mathematical Society},
\textbf{77} (4), 587-588, 1971.

\bibitem{Fe} {\sc Fefferman, C. L., McCormick, D. S., Robinson, J. C. and Rodrigo, J. L.}
 Higher Order Commutator Estimates and Local Existence for the Non-resistive MHD Equations and Related Models,
{\em Journal of Functional Analysis},
\textbf{267}, 1035-1056, 2014.

\bibitem{GP} {\sc Galdi, G.P., and Padula, M.} 
A new approach to energy theory in the stability of fluid motion, 
{\em Arch. Rational Mech. Anal.}
\textbf{110}, 187-286, 1990.

\bibitem{GF} {\sc Geng, J.,  Fan, J.}
A note on regularity criterion for the 3D Boussinesq system with zero thermal conductivity,
{\em Appl. Math. Lett.}
\textbf{25} (1), 63-66, 2012.

\bibitem{IM} {\sc Ishimura, N., Morimoto, H.}
Remarks on the blow-up criterion for the 3-D Boussinesq equations,
{\em Mathematical Models and Methods in Applied Sciences}
\textbf{9} (9), 1323-1332, 1999.

\bibitem{KP} {\sc Kato, T., Ponce, G.}
Commutator estimates and the Euler and Navier-Stokes Equations,
{\em Comm. Pure Appl. Math.},
\textbf{41}, 891-907, 1988.

\bibitem{Ks} {\sc Kesavan, S.,}
{\em Topics in Functional Analysis and Applications}, Second Ed., 2015.

\bibitem{KOT} {\sc Kozono, H., Ogawa, T., Taniuchi, Y.} 
The critical Sobolev inequalities in Besov spaces and regularity criterion to some semilinear evolution equations, 
{em Math. Z.},
\textbf{242} (2), 251-278, 2002.

\bibitem{KT} {\sc Kozono, H., Taniuchi, Y.}
Limiting Case of the Sobolev Inequality in BMO, with Application to the Euler Equations,
{\em Comm. Math. Phys.},
\textbf{214}, 191-200, 2000.

\bibitem{LR} {\sc Lemari\'e-Rieusset P. G.}
{\em Recent developments in the Navier-Stokes problem}, Chapman \& Hall/CRC research in mathematics series,
431, 2002.

\bibitem{LM} {\sc Lions, J. L., Magenes, E.} 
{\em Non-homogeneous boundary Value problems and Applications,}
Vol.\textbf{1}, Springer-Verlag, 
Newyork, 1972.

\bibitem{MP} {\sc Manna, U., Panda, A. A.}
Higher Order Regularity and Blow-up Criterion for Semi-dissipative and Ideal Boussinesq Equations,
{\em preprint}.

\bibitem{Ni} {\sc Nirenberg L.}
On elliptic partial differential equations,
{\em Ann. Scoula Norm. Sup. Pisa},
\textbf{13} (2), 115-162, 1959.

\bibitem{Pa} {\sc Planchon, F.}
An extension of the Beale-Kato-Majda criterion for the Euler equations, 
{\em Comm. Math. Phys.} 
\textbf{232} (2),319-326, 2003.

\bibitem{QDY} {\sc Qiu, H., Du, Y., Yao, Z.} 
A blow-up criterion for 3D Boussinesq equations in Besov spaces, 
{\em Nonlinear Anal.},
\textbf{73} (3), 806-815, 2010.

\bibitem{Ro} {\sc Robinson, J. C.} 
{\em Infinite Dimensional Dynamical Systems, An Introduction to Dissipative Parabolic PDEs and the Theory of Global Attractors,}
Cambridge University Press, 
UK, 2001.

\bibitem{SPG} {Schmidt, P. G.} On a Magnetohydrodynamic Problem of Euler Type, {\em
Journal of Differential Equations} {\bf 74} (2), 318-335, 1988.

\bibitem{SeP} {Secchi, P.} On the Equations of Ideal Incompressible
Magnetohydrodynamics, {\em Rend. Semin. Mat. Univ. Padova} {\bf 90}
103-119, 1993.

\bibitem{Te} {\sc Temam, R.} 
{\em Navier-Stokes equations. Theory and numerical analysis.} 
Studies in Mathematics and its Applications, North-Holland Publishing Co., Amsterdam-New York, 1979.

\bibitem{Ye} {\sc Ye, Z.}
Regularity criteria for 3D Boussinesq equations with zero thermal diffusion,
{\em Electronic Journal of Differential Equations}, 
\textbf{2015} (97), 1-7, 2015.

\bibitem{Yo} {\sc Yosida, K.} 
{\em Functional Analysis,} Sixth Ed.
Springer-Verlag, Berlin Heidelberg 
Newyork, 1980.

\bibitem{ZFN} {\sc Zhou, Y., Fan, J.S., Nakamura, G.}
Global Cauchy problem for a 2D magnetic B\'{e}nard problem with zero thermal conducivity
{\em Appl. Math. Lett.}
\textbf{26}, 627-630, 2013.

\end{thebibliography}
\end{document}